\newtheorem{Remark}{Remark}[section]
\newtheorem{Corollary}[Remark]{Corollary}
\newtheorem{Definition}[Remark]{Definition}
\newtheorem{Example}[Remark]{Example}
\newtheorem{Fact}[Remark]{Fact}
\newtheorem{Lemma}[Remark]{Lemma}
\newtheorem{Proposition}[Remark]{Proposition}
\newtheorem{Theorem}[Remark]{Theorem}
\newcommand{\bH}{\mathbb{H}}
\newcommand{\bI}{\mathbb{I}}
\newcommand{\bN}{\mathbb{N}}
\newcommand{\bO}{\mathbb{O}}
\newcommand{\bR}{\mathbb{R}}
\newcommand{\bU}{\mathbb{U}}
\newcommand{\bW}{\mathbb{W}}
\newcommand{\bV}{\mathbb{V}}
\newcommand{\bX}{\mathbb{X}}
\newcommand{\bZ}{\mathbb{Z}}
\newcommand{\cC}{\mathcal{C}}
\newcommand{\cG}{\mathcal{G}}
\renewcommand{\cH}{\mathcal{H}}
\newcommand{\cK}{\mathcal{K}}
\newcommand{\cN}{\mathcal{N}}
\newcommand{\cT}{\mathcal{T}}
\newcommand{\cU}{\mathcal{U}}
\newcommand{\cW}{\mathcal{W}}
\numberwithin{equation}{section} \errorcontextlines=0
\newcommand{\degso}{\nabla_{\sone}\textrm{-}\mathrm{deg}}
\newcommand{\degg}{\nabla_{G}\textrm{-}\mathrm{deg}}
\newcommand{\degh}{\nabla_{H}\textrm{-}\mathrm{deg}}
\newcommand{\diag}{\mathrm{ diag \;}}
\newcommand{\sign}{\mathrm{ sign \;}}
\newcommand{\sone}{SO(2)}
\newcommand{\h}{\mathbb{H}}
\newcommand{\sub}{\overline{\operatorname{sub}}}
\newcommand{\G}{\mathcal{G}}
\newcommand{\BIF}{\mathcal{BIF}}
\begin{document}

\title[Global bifurcation index]{Global bifurcation index of critical orbit of strongly indefinite functional}%
\subjclass[2010]{Primary: 58E09; Secondary: 35J50.}
\keywords{Equivariant degree, bifurcation theory, systems of elliptic equations.}

\author{Anna Go\l\c{e}biewska}
\address{Faculty of Mathematics and Computer Science\\
Nicolaus Copernicus University \\
PL-87-100 Toru\'{n} \\ ul. Chopina $12 \slash 18$ \\
Poland, ORCID 0000--0002--2417--9960}

\email{Anna.Golebiewska@mat.umk.pl}

\author{Piotr Stefaniak}
\address{School of Mathematics,
West Pomeranian University of Technology \\PL-70-310 Szcze\-cin, al. Piast\'{o}w $48\slash 49$, Poland, ORCID 0000--0002--6117--2573}

\email{pstefaniak@zut.edu.pl}

\numberwithin{equation}{section}
\allowdisplaybreaks
\date{\today}

\maketitle

\begin{abstract}
In this paper we study an index of a critical orbit, defined in terms of the degree for invariant strongly indefinite functionals. We establish a relationship of this index with the index of a critical point of the mapping restricted to the space normal to the orbit. The second aim of the article is to use the index of a critical orbit to prove the bifurcation of nontrivial solutions of nonlinear elliptic systems with Neumann boundary conditions. We consider also the existence of unbounded sets of solutions of such systems.
\end{abstract}

\section{Introduction}
The aim of this paper is to investigate an index of a critical orbit of a strongly indefinite functional. The problem of studying critical orbits and, in general, critical points, arises in the variational method for differential equations. With such an equation one can associate a functional defined on a suitable Hilbert space. Solutions of the equation are in one to one correspondence with critical points of the functional. That is why studying the existence of critical points and the structure of the set of such points is very important. Such problems have been widely investigated.

To detect critical points one can define an index of a point, for example in terms of the Conley index or of the degree. Nontriviality of the index implies the existence of the solutions.  However, this index is usually considered with the assumption that the critical points are isolated.

On the other hand, a differential equation, and therefore also the associated functional,  possesses often some additional symmetries, given for example by the symmetry of the domain or of the potential. In such a case, the assumption that critical points are isolated, does not have to be satisfied. More precisely, one should investigate a critical orbit instead of a critical point.

The index of a critical orbit, given in terms of the Conley index, has been recently defined in \cite{PRS}. This index can be used to prove the existence of critical orbits as well as the bifurcation of orbits of nontrivial solutions from the set of trivial ones. Moreover, using the relationship between the Conley index and the degree for strongly indefinite functionals given in \cite{GolRyb2013} one can prove, under additional assumptions, the existence of connected sets of orbits of nontrivial solutions bifurcating from the set of trivial ones.

However, in the general case, the Conley index cannot be used to describe the structure of connected sets of critical points. To study such sets, one can use for example the degree theory and investigate the phenomenon of a global bifurcation and a version of the famous Rabinowitz alternative. In particular, one can prove the existence of unbounded sets of solutions.

Therefore, the first aim of our paper is to define the index of an orbit as a degree on a neighbourhood of this orbit.  A sufficiently small neighbourhood can be described with the use of twisted spaces. The theory of such spaces plays an important role in the proofs of the properties of the mapping defined on this neighbourhood. In particular, it allows to describe the isotropy groups of elements of the critical orbit. As a consequence of these properties, we obtain the formula for the degree, see Theorem \ref{thm:formuladeg}. In some cases, namely for the so called admissible pairs of group, this result allows to reduce comparing the degrees of critical orbits to comparing the degrees of isolated critical points. We prove this in the finite dimensional case in Corollary \ref{cor:differentfinitedegrees} and in the infinite dimensional one in Theorem \ref{cor:difdeg}.
Next we apply these results to study the phenomenon of global bifurcation from a critical orbit. In particular we formulate a symmetric version of the Rabinowitz alternative, see Theorem \ref{thm:Rab}.

The second aim of our paper is the application of abstract results to the systems of differential equations. Namely, we consider
\begin{equation}\label{eq:neumannwstep}
\left\{
\begin{array}{rclcl}   A \triangle u & =& \nabla_u F(u,\lambda )   & \text{ in   } & \cU \\
                   \frac{\partial u}{\partial \nu} & =  & 0 & \text{ on    } & \partial\cU,
\end{array}\right.
\end{equation}
for $\cU$ being an open, bounded and $SO(N)$-invariant set, $A=\diag(\pm1, \ldots, \pm1)$ and $F$ satisfying some additional assumptions. Primarily, we consider the problem with $\Gamma$-symmetric potential $F$ for $\Gamma$ being a compact Lie group. For this system we prove the global bifurcation of nontrivial solutions from the set of trivial ones, see Theorem \ref{thm:glob}. Since the problem is symmetric, the trivial solutions do not have to be isolated. That is why we consider bifurcations from an orbit.

In the paper \cite{GolKluSte} we have considered the system \eqref{eq:neumannwstep} with $\nabla_uf(u, \lambda)=\lambda \nabla F(u)$, $\Omega=B^N$ and $a_i=1$. We have obtained global bifurcations of nontrivial solutions from an orbit of trivial ones. To this end we have examined the change of the Conley index and applied its relationship with the degree theory. It is worth pointing out that using this method we are able to find connected sets of solutions, but in this way we cannot study the structure of such sets, in particular we cannot prove the existence of unbounded sets of solutions.
Such information can be obtained via the degree theory.

Our last aim is to discuss the existence of unbounded continua of solutions, see Theorem \ref{thm:unbounded}.
To this end we study when the latter possibility in the Rabinowitz alternative can be excluded.

A similar method has been used by the second author in the papers \cite{RybShiSte} and \cite{RybSte}. In these papers some elliptic systems on spheres and on geodesic balls have been studied. Excluding one of the possibilities in the Rabinowitz alternative, it has been shown that in these systems all the continua of nontrivial solutions, bifurcating from the set of trivial ones, are unbounded.
We refer to these papers for further references and discussion of other results concerning unbounded continua of solutions.
In \cite{GolKlu} the first author has studied the existence of such sets with the use of the bifurcation index at the infinity, being an element of the Euler ring.
Nontriviality of this index implies the existence of such continua.
We emphasise that all these results have been obtained under the assumption that the set of trivial solutions is of the form $\{0\}\times\bR$.

\section{Preliminaries}

Throughout this section $G$ denotes a compact Lie group.

\subsection{The Euler ring}

Let $U(G)$ be the Euler ring of the group $G$. The definition and the properties of this ring can be found in  \cite{TomDieck1}, \cite{TomDieck}. Below we recall only some facts which are useful in the rest of the paper.

Denote by $\chi_G(X) \in U(G)$ the $G$-equivariant Euler characteristic of a pointed  finite $G$-CW-complex $X$. Recall that the actions in $U(G)$ are defined by
\begin{equation}\label{eq:actionsUG}
\left.\begin{array}{rcl}
\chi_G(X)+ \chi_G(Y)&=&\chi_G(X\vee Y),\\
\chi_G(X)\star \chi_G(Y)&=&\chi_G(X\wedge Y),
\end{array} \right.
\end{equation}
where $X\vee Y$ is the wedge sum and $X\wedge Y$ is the smash product of pointed finite $G$-CW-complexes  $X,Y$.

If $X$ is a $G$-CW-complex without a base point, then by $X^+$ we denote a pointed $G$-CW-complex $X\cup\{\ast\}$.

\begin{Lemma}
$(U(G),+,\star)$, with the actions given by \eqref{eq:actionsUG}, is a commutative ring with unit $\mathbb{I}=\chi_G(G/G^+)$.
\end{Lemma}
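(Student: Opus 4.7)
The plan is to verify each ring axiom by translating it into a canonical $G$-homotopy equivalence of pointed finite $G$-CW-complexes and then invoking $G$-homotopy invariance of $\chi_G$. Following \cite{TomDieck1,TomDieck}, every element of $U(G)$ is a formal difference of classes $\chi_G(X)$, so it suffices to check each axiom on representatives of this form and extend by bilinearity; this also takes care of the existence of additive inverses, which is built into the Grothendieck construction.

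First I would show that $+$ and $\star$ are well defined on $U(G)$. The additive part is automatic. For $\star$, the key observation is that the smash product of pointed finite $G$-CW-complexes preserves $G$-homotopy type and distributes over the wedge via the natural $G$-homeomorphism
\[
(X\vee Y)\wedge Z \;\cong\; (X\wedge Z)\vee(Y\wedge Z).
\]
Applying $\chi_G$ to this identity and using \eqref{eq:actionsUG} simultaneously yields the well-definedness of $\star$ on generators and the distributive law $(a+b)\star c=a\star c+b\star c$ in $U(G)$.

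Second, I would read off the remaining abelian group and commutative semiring axioms from standard natural $G$-homeomorphisms. Commutativity and associativity of $+$ follow from $X\vee Y\cong Y\vee X$ and $(X\vee Y)\vee Z\cong X\vee(Y\vee Z)$; the zero element is $\chi_G(\{\ast\})$, obtained from $X\vee\{\ast\}\cong X$. Commutativity and associativity of $\star$ follow analogously from $X\wedge Y\cong Y\wedge X$ and $(X\wedge Y)\wedge Z\cong X\wedge(Y\wedge Z)$. For the unit, observe that $(G/G)^+$ is the pointed $G$-space with basepoint $\ast$ and a single $G$-fixed non-basepoint, i.e.\ a copy of $S^0$ with trivial $G$-action; the canonical pointed $G$-homeomorphism $X\wedge S^0\cong X$ then gives $\chi_G(X)\star\mathbb{I}=\chi_G(X)$, and hence $a\star\mathbb{I}=a$ for every $a\in U(G)$.

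The main obstacle, I expect, is careful bookkeeping of the pointed structure: the wedge identifies basepoints, so $\chi_G$ must be understood as a reduced equivariant Euler characteristic, and one needs to verify that the basepoint does not contribute an extra term to either operation in \eqref{eq:actionsUG}. Once the conventions are fixed as in \cite{TomDieck1,TomDieck}, all of the ring axioms become formal consequences of the natural $G$-homeomorphisms listed above, together with the additivity and $G$-homotopy invariance of $\chi_G$.
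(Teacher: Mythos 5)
The paper does not prove this lemma at all: it is recalled as a known fact with a citation to tom Dieck's books, so there is no internal argument to compare yours against. Your sketch is the standard textbook verification and is essentially sound: the ring axioms do reduce to the natural pointed $G$-homeomorphisms you list ($(X\vee Y)\wedge Z \cong (X\wedge Z)\vee(Y\wedge Z)$, commutativity and associativity of $\vee$ and $\wedge$, and $X\wedge S^0\cong X$ with $G/G^+\cong S^0$ carrying the trivial action), combined with homotopy invariance and additivity of $\chi_G$. The one place where your argument is thinner than a complete proof is the well-definedness of $\star$: in tom Dieck's construction $U(G)$ is the free abelian group on $G$-homotopy types of pointed finite $G$-CW-complexes modulo the additivity relation $\chi_G(X)=\chi_G(A)+\chi_G(X/A)$ for \emph{arbitrary} pointed $G$-CW-pairs $(X,A)$, not only wedge decompositions; so to see that smashing with $Z$ descends to $U(G)$ you need the $G$-homeomorphism $(X\wedge Z)/(A\wedge Z)\cong (X/A)\wedge Z$ and the fact that $(X\wedge Z, A\wedge Z)$ is again a $G$-CW-pair, of which your wedge identity is only the special case $X=A\vee(X/A)$. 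With that supplement your outline matches the argument in the cited references.
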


Denote by $\sub[G]$ the set of conjugacy classes $(H)_G$ of closed subgroups of the group $G$.

\begin{Lemma}
$(U(G),+)$ is a free abelian group with the basis $\chi_G(G/H^+),\ (H)_G\in\sub[G]$.
\end{Lemma}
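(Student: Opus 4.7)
The plan is to establish separately that $\{\chi_G(G/H^+) : (H)_G \in \sub[G]\}$ spans $U(G)$ additively and is $\bZ$-linearly independent.

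For spanning, I would induct on the number of equivariant cells of a pointed finite $G$-CW-complex $X$. The basepoint alone gives $\chi_G(\{\ast\})=0$. In the inductive step, if $X$ is obtained from a subcomplex $A$ by attaching a single equivariant $n$-cell of orbit type $G/H$, then $X/A\cong G/H^+\wedge S^n$. Additivity of $\chi_G$ on the cofibre sequence $A\hookrightarrow X\to X/A$ (reducible to the wedge axiom in \eqref{eq:actionsUG}), together with $\chi_G(S^n)=(-1)^n\mathbb{I}$ and the smash-product axiom, yields
\[
\chi_G(X) \;=\; \chi_G(A) \,+\, (-1)^n\chi_G(G/H^+).
\]
Iterating through the cell filtration realises every $\chi_G(X)$ as a finite $\bZ$-combination of the proposed basis elements.

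For linear independence, for each closed subgroup $K\leq G$ I would introduce the additive homomorphism $\varphi_K\colon U(G)\to\bZ$ defined on generators by $\varphi_K(\chi_G(X))=\chi(X^K)-1$, the reduced Euler characteristic of the $K$-fixed subcomplex; well-definedness follows from $(X\vee Y)^K=X^K\vee Y^K$ and additivity of the reduced Euler characteristic on wedges. Given a hypothetical finite relation $\sum_{i=1}^n a_i\chi_G(G/H_i^+)=0$ with distinct $(H_i)_G$, I would extend the subconjugacy partial order on $\{(H_i)_G\}$ to a total order with $(H_i)_G\leq(H_j)_G\Rightarrow i\leq j$. Applying $\varphi_{H_j}$ to the relation produces a linear system whose matrix $M_{ji}=\chi((G/H_i)^{H_j})$ vanishes for $j>i$ (since $H_j$ is not subconjugate to $H_i$, so the fixed set is empty) and equals $\chi(WH_i)$ on the diagonal. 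Invertibility of this upper-triangular matrix then forces every $a_i=0$.

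The main technical point is the nonvanishing of the diagonal entries $\chi(WH_i)$; this holds in the standard framework of \cite{TomDieck}, where $\sub[G]$ is understood to collect conjugacy classes with finite Weyl group, giving $\chi(WH_i)=|WH_i|\geq 1$. Granted this convention, both the cell induction and the triangular-matrix argument are routine, and the lemma amounts to a restatement of the additive structure of the Euler ring from the cited references.
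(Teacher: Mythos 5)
The paper itself does not prove this lemma; it is quoted from tom Dieck (cf.\ Corollary IV.1.9 of \cite{TomDieck}), so your argument has to be measured against that standard proof. Your spanning argument by induction over the equivariant cells is correct and is essentially the standard one (modulo the small point that additivity of $\chi_G$ over a cofibration $A\hookrightarrow X\to X/A$ is the defining relation of $U(G)$ rather than something ``reducible to the wedge axiom'').

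The independence argument, however, has a genuine gap, and the patch you propose is not available. For a compact Lie group the diagonal entries of your matrix are $\chi\bigl((G/H_i)^{H_i}\bigr)=\chi(WH_i)$, and this vanishes whenever the Weyl group $WH_i=N(H_i)/H_i$ has positive dimension. This is not a marginal case: for $G=SO(2)$ and $H=\bZ_n$ one has $WH\cong SO(2)$, so $\chi(WH)=0$; in fact every homomorphism of the form $X\mapsto\tilde{\chi}(X^K)$ vanishes on $\chi_{SO(2)}(SO(2)/\bZ_n^+)$, so fixed-point Euler characteristics cannot separate these basis elements at all. Your proposed remedy --- declaring that $\sub[G]$ contains only classes with finite Weyl group --- contradicts the paper's definition of $\sub[G]$ (all conjugacy classes of closed subgroups) and tom Dieck's statement; that restriction belongs to the Burnside ring $A(G)$, not to the Euler ring $U(G)$. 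Under your convention the lemma would moreover become false (for $G=SO(2)$ it would assert that $U(SO(2))$ has rank one), and the paper's later computations, e.g.\ the nontrivial coordinates of $\degso(-Id,B(\bX))$ at the classes $(\bZ_n)$, would collapse. The correct independence argument (tom Dieck, IV.1.7--IV.1.9) replaces $\tilde{\chi}(X^K)$ by the signed count of equivariant cells of orbit type exactly $(K)$ --- equivalently, the compactly supported Euler characteristic of the stratum $X_{(K)}/G$ --- which takes the value $\delta_{(K),(H)}$ on $\chi_G(G/H^+)$ and whose $G$-homotopy invariance is the real technical content of the lemma.
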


\begin{Remark}
From the above lemma it follows that one can identify the Euler ring $U(G)$ with the $\bZ$-module generated by the conjugacy classes of the closed subgroups of $G$, i.e. with $\bigoplus_{(H)_G\in\sub[G]} \bZ$, see Corollary IV.1.9. of \cite{TomDieck}. Therefore we can index the coordinates of elements of $U(G)$ by the classes $(H)_G\in\sub[G]$.
\end{Remark}

\subsection{$G$-Morse functions}
In this subsection we recall the notion of invariant Morse functions, which has been introduced by Mayer in \cite{Mayer}.
Let $\bV$ be a finite dimensional, orthogonal representation of the group $G$ and $\Omega\subset\bV$ an open, bounded, $G$-invariant subset.

Fix a $G$-invariant map $\varphi\in C^2(\bV,\bR)$ and $v_0\in (\nabla\varphi)^{-1}(0)$. Denote by $T_{v_0} G(v_0)$ the tangent space to the orbit $G(v_0)$ at $v_0$ and let $\bW=(T_{v_0} G(v_0))^{\bot}$. Put $H=G_{v_0}$ and let $\bW^H$ denote the set of fixed points of the action of $H$ on $\bW$.

\begin{Lemma}\label{lem:hessian}
Under the above notation,
\begin{equation*}
\left.\begin{array}{rccc}
& T_{v_0} G(v_0)& &T_{v_0} G(v_0) \\
&\oplus& &\oplus\\
\nabla^2 \varphi(v_0)\colon &\bW^H&\to&\bW^H\\
&\oplus& &\oplus\\
&(\bW^H)^{\bot}& &(\bW^H)^{\bot}
\end{array}\right.
\end{equation*}
has the following form
\begin{equation*}
\nabla^2 \varphi(v_0)=\left[\begin{array}{ccc}
0&0&0\\
0&B(v_0)&0\\
0&0&C(v_0)
\end{array}\right].
\end{equation*}
\end{Lemma}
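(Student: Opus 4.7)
The plan is to exploit two simple facts: (i) the $G$-invariance of $\varphi$ makes $\nabla\varphi$ a $G$-equivariant vector field, and (ii) the Hessian of a smooth function is a symmetric bilinear form. Everything then reduces to checking that the off-diagonal blocks vanish.

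First I would show that $\nabla\varphi$ vanishes identically along the orbit $G(v_0)$. Since $\varphi$ is $G$-invariant, its gradient satisfies $\nabla\varphi(g\cdot v) = g\cdot\nabla\varphi(v)$ for every $g\in G$ and $v\in\bV$. Applying this at $v=v_0$ and using $\nabla\varphi(v_0)=0$ gives $\nabla\varphi \equiv 0$ on $G(v_0)$. Picking any $\xi\in T_{v_0}G(v_0)$, represented as the tangent at $t=0$ of a curve $t\mapsto \exp(tX)\cdot v_0\subset G(v_0)$, and differentiating $\nabla\varphi$ along that curve yields $\nabla^2\varphi(v_0)\,\xi = 0$. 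Hence the restriction of $\nabla^2\varphi(v_0)$ to $T_{v_0}G(v_0)$ is identically zero, which gives the whole top row of the claimed block matrix. Symmetry of $\nabla^2\varphi(v_0)$ then produces the left column: for $w\in\bW$ and $\xi\in T_{v_0}G(v_0)$,
\begin{equation*}
\langle \nabla^2\varphi(v_0)\,w,\xi\rangle \;=\; \langle w,\nabla^2\varphi(v_0)\,\xi\rangle \;=\;0,
\end{equation*}
so $\nabla^2\varphi(v_0)(\bW)\subseteq \bW$.

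Next I would refine the analysis inside $\bW$. Differentiating the identity $\nabla\varphi(h\cdot v) = h\cdot \nabla\varphi(v)$ at $v=v_0$ and using $h\cdot v_0=v_0$ for $h\in H=G_{v_0}$ shows that $\nabla^2\varphi(v_0)$ commutes with the $H$-action on $\bV$. Consequently $\nabla^2\varphi(v_0)$ preserves the isotypical decomposition $\bW=\bW^H\oplus(\bW^H)^{\bot}$: if $w\in\bW^H$ then $h\cdot \nabla^2\varphi(v_0)w=\nabla^2\varphi(v_0)(h\cdot w)=\nabla^2\varphi(v_0)w$ for all $h\in H$, so $\nabla^2\varphi(v_0)w\in \bW\cap\bV^H=\bW^H$. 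Symmetry of the Hessian then forces $\nabla^2\varphi(v_0)\bigl((\bW^H)^{\bot}\bigr)\subseteq (\bW^H)^{\bot}$ by the same duality argument as before. Defining $B(v_0):=\nabla^2\varphi(v_0)|_{\bW^H}$ and $C(v_0):=\nabla^2\varphi(v_0)|_{(\bW^H)^{\bot}}$ then gives the stated block-diagonal form.

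The bookkeeping step I would take most care with is the very first one: one has to justify that the $H$-action on $\bW$ is well defined, i.e. that $\bW=(T_{v_0}G(v_0))^{\bot}$ really is $H$-invariant. This follows because $H$ fixes $v_0$, hence acts linearly on $\bV$ preserving both $T_{v_0}G(v_0)$ and the $G$-invariant inner product; its orthogonal complement $\bW$ is therefore $H$-stable, and the subsequent splitting $\bW=\bW^H\oplus(\bW^H)^{\bot}$ is $H$-invariant. With that verified, the rest is just symmetry plus equivariance, and no genuine obstacle remains.
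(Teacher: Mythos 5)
Your proof is correct and is the standard argument; the paper itself gives no proof of this lemma, deferring to the reference \cite{Geba}, and your reasoning (equivariance of $\nabla\varphi$ forcing it to vanish on the orbit, hence $\nabla^2\varphi(v_0)$ kills $T_{v_0}G(v_0)$; symmetry of the Hessian to propagate zeros to the column; $H$-equivariance of $\nabla^2\varphi(v_0)$ to split $\bW^H$ from $(\bW^H)^{\bot}$) is exactly the expected one. Your care in checking that $\bW$ is $H$-stable and that the image of $\bW^H$ lands in $\bW\cap\bV^H=\bW^H$ (using the already-established inclusion of the image in $\bW$) closes the only points where a gap could arise.
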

The proof of this lemma can be found in \cite{Geba}.

A $G$-invariant function $\varphi\in C^1(\bV,\bR)$ is called $\Omega$-admissible if $(\nabla\varphi)^{-1}(0)\cap\partial\Omega=\emptyset$. A $G$-invariant map $h\in C^1(\bV\times[0,1],\bR)$ is an $\Omega$-admissible homotopy if $(\nabla_v h)^{-1}(0)\cap(\partial\Omega\times[0,1])=\emptyset$.

We say that $\Omega$-admissible $G$-invariant functions $\varphi_1,\varphi_2\in C^1(\bV,\bR)$ are  $\Omega$-homotopic if there exists an $\Omega$-admissible homotopy $h\in C^1(\bV\times[0,1],\bR)$ such that $\nabla_v h (\cdot,0)=\nabla \varphi_1$ and $\nabla_v h (\cdot,1)=\nabla \varphi_2$.

We call an $\Omega$-admissible $G$-invariant function $\varphi\in C^2(\bV,\bR)$ a $G$-invariant $\Omega$-Morse function if for every $v_i\in(\nabla\varphi)^{-1}(0)\cap \Omega$ the orbit $G(v_i)$ is a non-degenerate critical orbit, i.e. $\dim \ker \nabla^2 \varphi(v_i)=\dim G(v_i)$.

Let $\varphi\in C^2(\bV,\bR)$ be a $G$-invariant $\Omega$-Morse function. We say that $\varphi$ is a special $G$-invariant $\Omega$-Morse function if for every $v_i\in(\nabla\varphi)^{-1}(0)\cap \Omega$  the orbit $G(v_i)$ is a special non-degenerate critical orbit, i.e. $m^-(\nabla^2 \varphi(v_i))=m^-(B(v_i))$, where $m^-(\cdot)$ is the Morse index and $B(v_i)$ is defined in Lemma \ref{lem:hessian}.

\begin{Lemma}\label{lem:special}
Let $\phi\in C^2(\bV,\bR)$ be a $G$-invariant $\Omega$-Morse function. Then there exists an open, bounded, $G$-invariant subset $\Omega_0\subset cl(\Omega_0)\subset \Omega$ such that $(\nabla\phi)^{-1}(0)\cap \Omega\subset \Omega_0$ and a special $G$-invariant $\Omega$-Morse function $\tilde{\phi}\colon\bV\to\bR$ such that
\begin{enumerate}
\item $(\nabla\tilde{\phi})^{-1}(0)\cap\Omega\subset \Omega_0$,
\item $\tilde{\phi}=\phi$ on $\bV\setminus\Omega_0$,
\item $(\nabla\phi)^{-1}(0)\subset (\nabla\tilde{\phi})^{-1}(0)$.
\end{enumerate}
\end{Lemma}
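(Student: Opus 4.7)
The plan is to construct $\tilde\phi$ by a $G$-equivariant local perturbation of $\phi$ near each of its critical orbits in $\Omega$. First I would use non-degeneracy: every critical orbit of $\phi$ in $\Omega$ is non-degenerate, hence isolated as an orbit; combined with the compactness of $cl(\Omega)$ and the $\Omega$-admissibility $(\nabla\phi)^{-1}(0)\cap\partial\Omega=\emptyset$, this forces $(\nabla\phi)^{-1}(0)\cap\Omega$ to consist of finitely many orbits $G(v_1),\ldots,G(v_k)$. I would pick pairwise disjoint open $G$-invariant tubular neighborhoods $\cV_i$ of $G(v_i)$ with $cl(\cV_i)\subset\Omega$, and set $\Omega_0=\bigcup_i\cV_i$; then $cl(\Omega_0)\subset\Omega$ and $(\nabla\phi)^{-1}(0)\cap\Omega\subset\Omega_0$ automatically.

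Next I would work in slice coordinates at each $v_i$. By the slice theorem, $\cV_i$ is $G$-equivariantly diffeomorphic to $G\times_{H_i}\bW_i^{(r)}$ for $H_i=G_{v_i}$, $\bW_i=(T_{v_i}G(v_i))^{\perp}$, and $r$ sufficiently small, where $\bW_i^{(r)}=\{w\in\bW_i:\|w\|<r\}$. An equivariant Morse lemma at the non-degenerate orbit places $\phi$ into the normal form $\phi(v_i)+\tfrac{1}{2}\langle B(v_i)w_1,w_1\rangle+\tfrac{1}{2}\langle C(v_i)w_2,w_2\rangle$ on the slice, with $w=(w_1,w_2)\in\bW_i^{H_i}\oplus(\bW_i^{H_i})^{\perp}$ and $B(v_i)$, $C(v_i)$ as in Lemma \ref{lem:hessian}, both invertible. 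The only obstruction to $\phi$ being special is that $C(v_i)$ may fail to be positive definite.

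Then I would modify only the $(\bW_i^{H_i})^{\perp}$-block. Pick $t_i>0$ so that $C(v_i)+2t_iI$ is positive definite on $(\bW_i^{H_i})^{\perp}$, choose an $H_i$-invariant smooth cutoff $\rho_i\colon\bW_i\to[0,1]$ equal to $1$ on $\bW_i^{(r/2)}$ and supported in $\bW_i^{(r)}$, and define in slice coordinates $\psi_i(w)=t_i\rho_i(w)\|w_2\|^2$. This function is $H_i$-invariant, so it extends via the slice bundle to a $G$-invariant function on $\cV_i$ and by zero to $\bV$; set $\tilde\phi=\phi+\sum_i\psi_i$. Conditions (2) and (3) are then immediate: $\tilde\phi\equiv\phi$ on $\bV\setminus\Omega_0$, and each $\psi_i$ vanishes to second order along $G(v_i)$ so $\nabla\psi_i$ vanishes on the original critical set. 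At $v_i$ the new Hessian restricted to $(\bW_i^{H_i})^{\perp}$ equals $C(v_i)+2t_iI>0$, so the orbit remains non-degenerate under $\tilde\phi$ and $m^-(\nabla^2\tilde\phi(v_i))=m^-(B(v_i))$, which is exactly the definition of special.

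The main obstacle is condition (1), namely showing that no spurious critical orbit of $\tilde\phi$ is created in the transition annulus $r/2\leq\|w\|\leq r$ in each slice. There, the normal form gives $|\nabla\phi(w)|\geq\mu_i\|w\|$, where $\mu_i$ is the smallest absolute value of an eigenvalue of $B(v_i)$ or $C(v_i)$, while $|\nabla\psi_i(w)|$ is controlled in terms of $t_i$, $\|\rho_i\|_{C^1}$ and $\|w_2\|$. To guarantee $|\nabla\psi_i|<|\nabla\phi|$ throughout the annulus one must choose $\rho_i$ carefully; a clean device is to take a product cutoff $\rho_i(w)=\chi(\|w_1\|^2/r^2)\chi(\|w_2\|^2/r^2)$ for a suitable one-dimensional $\chi$, which decouples the $w_1$- and $w_2$-directions and allows separate estimates in each block, and, should a single shift $t_i$ be too large for these estimates, to reach the desired positivity of $C(v_i)+2t_iI$ by iterating the construction with a sequence of small shifts. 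The remainder of the argument is bookkeeping in slice coordinates.
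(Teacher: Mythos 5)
Your overall scaffolding (finitely many non-degenerate orbits, disjoint invariant tubes, slice coordinates, an $H_i$-invariant perturbation $t_i\rho_i(w)\|w_2\|^2$) is reasonable, but the central step cannot work as you describe it: you try to arrange that \emph{no} new critical points are created, so that $G(v_i)$ remains the only critical orbit in $\cV_i$ with its normal block changed from $C(v_i)$ to $C(v_i)+2t_iI>0$. This is obstructed. Since $\tilde\phi$ agrees with $\phi$ near the boundary of the slice ball, the Brouwer degree of the gradient of the restriction to the slice over that ball is unchanged and equals $(-1)^{m^-(B(v_i))+m^-(C(v_i))}$; if $v_i$ were the only remaining zero it would contribute $(-1)^{m^-(B(v_i))}$, a contradiction whenever $m^-(C(v_i))$ is odd. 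Concretely: along the ray $w_1=0$, $w_2=se$ with $e$ a unit eigenvector of $C(v_i)$ for an eigenvalue $-c<0$, your gradient reduces (for either of your cutoffs) to $s\bigl(-c+2t_i\rho(s)+t_is\rho'(s)\bigr)e$, and the bracket is $-c+2t_i>0$ at $s=r/2$ but $-c<0$ at $s=r$, so it vanishes somewhere in the annulus for \emph{every} choice of cutoff --- this is the one-dimensional fact that you cannot turn a local maximum into a local minimum without creating new critical points in between. Your fallback of iterating small shifts fails for the same reason; moreover, to reach positive definiteness an eigenvalue of the $C$-block must cross $0$, at which stage the intermediate orbit is degenerate, the Morse normal form is unavailable, and your lower bound $|\nabla\phi(w)|\ge\mu_i\|w\|$ collapses, so the admissible step size shrinks to zero before the crossing.

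This is not a repairable technicality but a misreading of what the lemma asserts: condition (3) is an inclusion rather than an equality precisely because new critical orbits must in general appear inside $\Omega_0$, and the paper depends on this later --- see \eqref{eq:krytvarphi}, where the special function $\tilde\psi$ associated with $\psi$ acquires extra orbits $H(v_1),\ldots,H(v_l)$. The paper gives no proof of its own and refers to \cite{Mayer}; the genuine content of that construction is to push the negative normal directions off the fixed-point space $\bW_i^{H_i}$ in a controlled way, deliberately creating new non-degenerate critical orbits whose isotropy groups are strictly smaller (so that their own normal blocks are smaller, eventually trivial, and the specialness condition at them is automatic or attainable), and to induct over isotropy types. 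Your write-up contains no analysis of the new orbits that your perturbation necessarily produces, so the claim that $\tilde\phi$ is even an $\Omega$-Morse function, let alone a special one, is unsupported.
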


For the proof of the above lemma we refer to \cite{Mayer}.

\begin{Remark}\label{rem:special}
We say that a special $G$-invariant $\Omega$-Morse function $\tilde{\phi}$ satisfying conditions (1)--(3) of Lemma \ref{lem:special} is associated with $\phi$.
Note that the condition (2) implies that any two functions $\tilde{\phi}_1, \tilde{\phi}_2$ associated with  $\phi$ are $\Omega$-homotopic.
\end{Remark}

\subsection{Twisted spaces}\label{subsec:twisted}

For a closed subgroup $H$ of $G$ and a given $H$-space $X$ one can construct a $G$-space associated with $X$. Consider the  product $G\times X$ with an $H$-action given by $(h,(g,x))\mapsto(gh^{-1}, hx)$. Denote by $G\times_H X$ the space of orbits of this action and note that $G\times_H X$ is a $G$-space with the $G$-action given by $(g',[g,x])\mapsto [g'g,x]$ for $g'\in G$ and $[g,x]\in G\times_H X$. We call this space the twisted product over $H$. The properties of the twisted product can be found for example in \cite{Kawakubo}, \cite{TomDieck}.

Let $\bV$ be a finite dimensional, orthogonal representation of $G$. Fix $v_0\in \bV$, put $H=G_{v_0}$ and as previously put $\bW=(T_{v_0} G(v_0))^{\bot}$. It is easy to prove that $\bW$ is an $H$-space.
Denote by $B_{\epsilon}(v_0,\bW)$ the open ball of radius $\epsilon$ centred at $v_0$.

For the proofs of the two next theorems see for example \cite{Bredon}, \cite{DuisKolk}, \cite{Mayer}.

\begin{Theorem}[Slice theorem]\label{thm:slice}
There exists $\epsilon>0$ such that the mapping $G\times_H \bW \to \bV$ defined by $[g,w]\mapsto gw$ induces a $G$-equivariant diffeomorphism $\theta$ from $G\times_H B_{\epsilon}(v_0,\bW)$ to an open $G$-invariant neighbourhood  $G\cdot B_{\epsilon}(v_0,\bW)=\{g w\colon g\in G, w\in B_{\epsilon}(v_0,\bW) \}$ of the orbit $G(v_0)$.
\end{Theorem}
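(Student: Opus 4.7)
The plan is to construct the map $\Phi \colon G \times_H \bW \to \bV$ by $\Phi([g,w]) = gw$, verify that it is a well-defined $G$-equivariant local diffeomorphism along the zero section, and then upgrade this to injectivity on a sufficiently small tube via a compactness argument.

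First I would check that $\Phi$ is well-defined, i.e.\ independent of the choice of representative of the class $[g,w]$. The action of $H = G_{v_0}$ on $\bV$ is orthogonal and fixes $v_0$, so it preserves $T_{v_0}G(v_0)$ (as the tangent space at a fixed point of an $H$-invariant submanifold) and therefore also its orthogonal complement $\bW$. Hence the prescription $h\cdot(g,w) = (gh^{-1}, hw)$ restricts to a free $H$-action on $G \times \bW$, and $\Phi([g h^{-1}, hw]) = gh^{-1}hw = gw$. Equivariance under $g'\cdot[g,w] = [g'g,w]$ is immediate.

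Next I would compute $d\Phi$ at $[e,v_0]$. Under the canonical identification
\[
T_{[e,v_0]}\bigl(G\times_H \bW\bigr) \;\cong\; \bigl(T_e G/\mathfrak{h}\bigr)\oplus \bW \;\cong\; T_{v_0}G(v_0)\oplus \bW,
\]
the differential sends $(X,\xi)$ to $X + \xi$, which is an isomorphism because the direct sum $T_{v_0}G(v_0)\oplus \bW$ is by construction all of $\bV$. The inverse function theorem then makes $\Phi$ a local diffeomorphism at $[e,v_0]$, and by $G$-equivariance at every point of the zero section $G\times_H\{v_0\}$. Since this section is compact (diffeomorphic to $G/H$), there is a uniform $\epsilon_1>0$ such that $\Phi$ is a local diffeomorphism on all of $G\times_H B_{\epsilon_1}(v_0,\bW)$.

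The main obstacle is proving global injectivity on some tube $G\times_H B_\epsilon(v_0,\bW)$ with $\epsilon\le\epsilon_1$. I would argue by contradiction: if this failed for every radius $1/n$, one could pick distinct classes $[g_n,w_n]\neq[g_n',w_n']$ with $g_n w_n = g_n' w_n'$ and $w_n,w_n'\to v_0$. Setting $k_n := (g_n')^{-1}g_n$, the equality reads $w_n' = k_n w_n$, and the non-equality of classes is exactly the statement $k_n\notin H$. By compactness of $G$ pass to a subsequence with $k_n\to k_\infty$ and $g_n'\to g_\infty'$; taking limits in $w_n' = k_n w_n$ yields $v_0 = k_\infty v_0$, so $k_\infty\in H$. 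Then both $[g_n,w_n]$ and $[g_n',w_n']$ converge in $G\times_H\bW$ to the common class $[g_\infty',v_0]$ (using $[g_\infty' k_\infty, v_0] = [g_\infty',k_\infty v_0] = [g_\infty',v_0]$). Translating by $(g_\infty')^{-1}$ via $G$-equivariance, both sequences converge to $[e,v_0]$, where $\Phi$ is locally injective by the previous paragraph; this forces $[g_n,w_n] = [g_n',w_n']$ for large $n$, a contradiction.

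Finally, the image $G\cdot B_\epsilon(v_0,\bW)$ is open because a local diffeomorphism is an open map, $G$-invariant by equivariance, and contains the orbit $G(v_0)$ as the image of the zero section. Hence $\Phi$ restricts to a $G$-equivariant diffeomorphism from $G\times_H B_\epsilon(v_0,\bW)$ onto an open invariant neighbourhood of $G(v_0)$, as required.
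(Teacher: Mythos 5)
Your argument is correct and is precisely the standard proof of the slice theorem for orthogonal actions of compact Lie groups: well-definedness and equivariance, the differential computation along the section $G\times_H\{v_0\}$ giving $T_{v_0}G(v_0)\oplus\bW=\bV$, a compactness argument for a uniform radius, and a sequential contradiction argument for injectivity. The paper itself gives no proof, deferring to Bredon, Duistermaat--Kolk and Mayer, where essentially this same argument appears, so there is nothing to compare beyond noting that your write-up faithfully reconstructs it.
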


\begin{Theorem}\label{thm:slice2}
If the $\epsilon$ is given by Theorem \ref{thm:slice} and $g\cdot B_{\epsilon}(v_0,\bW)\cap B_{\epsilon}(v_0,\bW) \neq \emptyset$, then $g\in G_{v_0}$.
\end{Theorem}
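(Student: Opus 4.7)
The plan is to reduce the claim directly to the injectivity half of Theorem \ref{thm:slice}. The hypothesis $g\cdot B_{\epsilon}(v_0,\bW)\cap B_{\epsilon}(v_0,\bW)\neq\emptyset$ produces elements $w_1,w_2\in B_{\epsilon}(v_0,\bW)$ with $gw_1=w_2$. First, I would rewrite this identity in terms of the diffeomorphism $\theta\colon G\times_H B_{\epsilon}(v_0,\bW)\to G\cdot B_{\epsilon}(v_0,\bW)$ from the previous theorem, observing that both $[g,w_1]$ and $[e,w_2]$ lie in the domain and
\[
\theta([g,w_1])=gw_1=w_2=e\cdot w_2=\theta([e,w_2]).
\]

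Next, I would invoke the injectivity of $\theta$: since it is a $G$-equivariant diffeomorphism, the displayed equality forces $[g,w_1]=[e,w_2]$ as elements of $G\times_H B_{\epsilon}(v_0,\bW)$. Unwinding the $H$-orbit equivalence on $G\times B_{\epsilon}(v_0,\bW)$ recalled in Section \ref{subsec:twisted}, this means that there exists $h\in H=G_{v_0}$ such that $(e,w_2)=(gh^{-1},hw_1)$. Comparing the first coordinates gives $g=h$, and therefore $g\in G_{v_0}$, which is the desired conclusion.

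I do not expect any serious obstacle, since once Theorem \ref{thm:slice} is available the statement is essentially a bookkeeping fact about the equivalence relation on $G\times B_{\epsilon}(v_0,\bW)$. The one delicate point is the convention for the $H$-action $(h,(g,x))\mapsto(gh^{-1},hx)$: one must be careful that the identification $[g,w_1]=[e,w_2]$ yields $g\in H$ and not, say, $g^{-1}\in H$ or some conjugate. As a sanity check one can verify that the second-coordinate equality $w_2=hw_1=gw_1$ is automatically consistent once $g=h$ has been extracted from the first coordinate.
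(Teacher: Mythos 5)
Your argument is correct and complete: the identification $\theta([g,w_1])=\theta([e,w_2])$ together with injectivity of $\theta$ and the explicit form of the $H$-action on $G\times B_{\epsilon}(v_0,\bW)$ does force $g=h\in H=G_{v_0}$, and $H$ does preserve $B_{\epsilon}(v_0,\bW)$ since it fixes $v_0$ and acts orthogonally on $\bW$. The paper itself gives no proof of this theorem, deferring to \cite{Bredon}, \cite{DuisKolk}, \cite{Mayer}; your derivation from the tube diffeomorphism of Theorem \ref{thm:slice} is precisely the standard argument found there.
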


\section{Degree of a critical orbit}

 In this section we recall some facts concerning the degree for equivariant gradient maps and the degree for invariant strongly indefinite functionals. Next we prove the results allowing to simplify the computation of the degree at some neighbourhood of a critical orbit.

\subsection{The definition of the degree }\label{subsec:defdegree}

Let $G$ be a compact Lie group and $\bV$ be a finite dimensional orthogonal $G$-representation. For a $G$-invariant function $\varphi \in C^1(\bV, \bR)$ and an open, bounded, $G$-invariant set $\Omega \subset \bV$ such that $\partial \Omega \cap (\nabla \varphi)^{-1}(0) = \emptyset,$ Gęba has defined in \cite{Geba} the degree $\degg(\nabla \varphi, \Omega)$, being an element of the Euler ring $U(G)$. In this subsection we recall some elements of this definition. Namely, since in our paper we use only $G$-invariant $\Omega$-Morse functions, we recall the definition of the degree for such mappings.

We start with the definition of the degree for a special $G$-invariant $\Omega$-Morse function $\varphi\in C^2(\bV,\bR)$.
In this case $(\nabla\varphi)^{-1}(0)\cap cl(\Omega)=G(v_1)\cup\ldots\cup G(v_k)$ and $G(v_i)\cap G(v_j)=\emptyset$ for $i\neq j$. Set $\mathcal{V}= \{v_1,\ldots, v_k\}$. For $(K)_G\in\sub[G]$ put
\begin{equation}\label{eq:stopienspec}
\nabla_G\text{-}\deg_{(K)}\left(\nabla\varphi,\Omega\right)=\sum\limits_{v\in\mathcal{V}, (G_v)_G=(K)_G} (-1)^{m^-\left(\nabla^2\varphi(v)\right)} \in \bZ
\end{equation}
and define the degree $\nabla_G\text{-}\deg(\nabla\varphi,\Omega)\in U(G)$ by the formula
\begin{equation*}
\nabla_G\text{-}\deg\left(\nabla\varphi,\Omega\right)=\sum\limits_{(K)_G \in \sub[G]}\nabla_G\text{-}\deg_{(K)}\left(\nabla\varphi,\Omega\right)\cdot \chi_G\left(G/K^+\right).
\end{equation*}

\begin{Theorem}\label{thm:homotopic}
Let $\varphi_1,\varphi_2\colon\bV\to\bR$ be special $G$-invariant $\Omega$-Morse functions. If $\varphi_1,\varphi_2$ are $\Omega$-homotopic then $\nabla_G\text{-}\deg(\nabla\varphi_1,\Omega)=\nabla_G\text{-}\deg(\nabla\varphi_2,\Omega)$.
\end{Theorem}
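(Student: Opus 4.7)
The plan is to establish homotopy invariance by a generic-perturbation argument combined with an analysis of the elementary bifurcations of critical orbits along the homotopy. Let $h \in C^1(\bV \times [0,1],\bR)$ be an $\Omega$-admissible $G$-invariant homotopy with $\nabla_v h(\cdot,0)=\nabla\varphi_1$ and $\nabla_v h(\cdot,1)=\nabla\varphi_2$. The admissibility hypothesis guarantees that the critical set $M=\{(v,t)\in cl(\Omega)\times[0,1]:\nabla_v h(v,t)=0\}$ is compact, $G$-invariant (with $G$ acting trivially on the $t$-factor), and disjoint from $\partial\Omega\times[0,1]$.

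The first step is to approximate $h$ by a $G$-invariant $C^2$-homotopy $\tilde{h}$ which coincides with $h$ at $t=0,1$ and near $\partial\Omega$, and which is \emph{generic} in the following sense: there is a finite exceptional set $T\subset(0,1)$ such that for every $t\in[0,1]\setminus T$ the function $\tilde{h}(\cdot,t)$ is a special $G$-invariant $\Omega$-Morse function, and at each $t_{\ast}\in T$ exactly one simple fold degeneracy occurs along a single critical orbit. The existence of such a perturbation follows by an equivariant Sard--Smale argument on the normal slice to each critical orbit supplied by the Slice Theorem \ref{thm:slice}: the local problem reduces via $\theta$ to a $G_{v_0}$-invariant problem on $\bW$, where the standard density of Morse homotopies applies, and Lemma \ref{lem:special} allows us to arrange $\tilde{h}$ to be \emph{special} on each subinterval of $[0,1]\setminus T$.

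On each connected component of $[0,1]\setminus T$ I would propagate the finitely many critical orbits $G(v_i(t))$ smoothly in $t$ by applying the equivariant implicit function theorem to $\nabla_v \tilde{h}$ restricted to the normal slice $\bW$. Non-degeneracy of the $B(v_i(t))$- and $C(v_i(t))$-blocks of the Hessian (see Lemma \ref{lem:hessian}) persists under small perturbations, so the isotropy class $(G_{v_i(t)})_G=(K)_G$ is constant in $t$ and the Morse index $m^-(\nabla^2\tilde{h}(v_i(t),t))$ is locally constant. Consequently each summand in the formula defining $\degg_{(K)}(\nabla\tilde{h}(\cdot,t),\Omega)$ is unchanged, so the degree is constant on every connected piece of $[0,1]\setminus T$.

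The main obstacle is the crossing analysis at each exceptional time $t_{\ast}\in T$. In the local slice coordinates provided by Theorem \ref{thm:slice} the degeneracy is a standard $G$-equivariant saddle-node: two critical orbits of the \emph{same} orbit type $(K)_G$, whose Morse indices differ by exactly one, are created or annihilated as $t$ crosses $t_{\ast}$. Their contributions to $\degg_{(K)}$ are $+1$ and $-1$ and thus cancel in formula \eqref{eq:stopienspec}, while all other orbit types are untouched. Piecing the intervals together yields $\degg(\nabla\varphi_1,\Omega)=\degg(\nabla\varphi_2,\Omega)$. The delicate technical point is ensuring that only such codimension-one fold bifurcations occur generically, ruling out orbit-type jumps and simultaneous crossings in the $B$- and $C$-blocks; this is exactly where the special condition $m^-(\nabla^2\varphi)=m^-(B)$ and the block-diagonal decomposition of Lemma \ref{lem:hessian} are indispensable, as they separate the "in-slice" and "fixed-point" Hessian directions and permit a clean codimension count within the space of $G$-invariant homotopies.
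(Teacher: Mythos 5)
The paper does not prove Theorem \ref{thm:homotopic} at all: it is recalled from G\c{e}ba's construction \cite{Geba} (see also \cite{Ryb2005milano}), where homotopy invariance is obtained not by a Cerf-type crossing analysis of a generic path but by an induction over orbit types that reduces the statement to continuation-invariant data (the ordinary Brouwer degree on the fixed-point strata, respectively the equivariant Conley index). Your route is therefore genuinely different, and it has a real gap at exactly the step you flag as ``delicate'': the claim that a generic invariant homotopy exhibits only simple folds in which two critical orbits of the \emph{same} orbit type and adjacent Morse indices cancel. In the equivariant category this classification is false. Symmetry-breaking degeneracies, in which the kernel of the Hessian at $t_{\ast}$ lies in $(\bW^H)^{\bot}$ rather than in $\bW^H$, are codimension-one events that \emph{cannot} be removed by an invariant perturbation. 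The local model is $G=\bZ_2$ acting on $\bR$ by $x\mapsto -x$ with $h(x,t)=x^4-(t-t_{\ast})x^2$: every invariant perturbation is even, so $x=0$ remains a critical point for all $t$, and the nontrivial free orbit is necessarily born \emph{out of} the fixed point when $\partial_x^2h(0,t)$ crosses zero. At such an event no two orbits of the same type annihilate; instead one orbit changes the signature of its $C$-block (so it ceases to be special) and sheds orbits of strictly smaller isotropy. Verifying that \eqref{eq:stopienspec} is unchanged then requires re-applying the Mayer modification of Lemma \ref{lem:special} on either side of $t_{\ast}$ and checking a cancellation \emph{across different orbit types}; that computation is precisely what your argument omits.

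Two subsidiary points reinforce this. First, ``the standard density of Morse homotopies'' is not available on the slice: the slice problem is still $H=G_{v_0}$-invariant, so the relevant transversality theory is Wasserman's equivariant one, whose generic one-parameter families do contain symmetry-breaking transitions as above. Second, special Morse functions are not dense among Morse functions (passing to an associated special function is a non-small modification), so no small generic perturbation can make $\tilde h(\cdot,t)$ special for all $t\notin T$; one must interleave genericity with $t$-dependent applications of Lemma \ref{lem:special}, which reintroduces the cross-orbit-type bookkeeping. A corrected version of your strategy would have to classify \emph{all} generic codimension-one degeneracies of invariant gradient families --- same-type folds \emph{and} symmetry-breaking transitions --- and check invariance of the degree in each case; the second case is the substantive one, and it is the one the proofs in \cite{Geba} and \cite{Ryb2005milano} avoid by working stratum by stratum with the ordinary degree.
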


Let $\phi\in C^2(\bV,\bR)$ be a $G$-invariant $\Omega$-Morse function and $\tilde{\phi}\in C^2(\bV,\bR)$ a special $G$-invariant $\Omega$-Morse function associated with $\phi$.
Define
\begin{equation*}
\nabla_G\text{-}\deg\left(\nabla\phi,\Omega\right)=\nabla_G\text{-}\deg\left(\nabla\tilde{\phi},\Omega\right).
\end{equation*}

Let $\tilde{\phi}_1, \tilde{\phi}_2$ be special invariant Morse functions associated with $\phi$. By Remark \ref{rem:special} we obtain that $\tilde{\phi}_1$ and $\tilde{\phi}_2$ are $\Omega$-homotopic. Therefore Theorem \ref{thm:homotopic} implies that the degree $\degg(\nabla \phi, \Omega)$ is well-defined.

\begin{Remark}\label{rem:propdeg} The degree for $G$-equivariant gradient maps has the properties of excision, additivity, linearisation and homotopy invariance, see \cite{Geba}, \cite{Ryb2005milano}. Moreover, there holds the product formula, see \cite{GolRyb2013}. For more details concerning the theory of equivariant degree, we refer the reader to \cite{BalKra}, \cite{BalKraRyb}.
\end{Remark}

As a corollary of Lemma 3.4 of \cite{GarRyb}, we obtain:
\begin{Fact}\label{fact:lemGarRyb}
Let $\bV, \bV'$ be real, orthogonal $G$-representations and $\nabla_G\text{-}\deg(-Id, B(\bV)) = \nabla_G\text{-}\deg(-Id, B(\bV'))$. Then for some $m,n \in \bN \cup\{0\}$ the representation $\bV\oplus \bR^{2m}$ is  $G$-equivalent to $\bV'\oplus \bR^{2n}$.
\end{Fact}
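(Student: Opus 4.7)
The plan is to deduce this statement from Lemma 3.4 of \cite{GarRyb}. I would first observe that $\degg(-Id, B(\bV))$ depends only on the $G$-isomorphism class of $\bV$ and is stable under adding trivial even-dimensional representations. Indeed, $-Id = \nabla(-\tfrac{1}{2}\|\cdot\|^2)$ has the origin as its only zero, and for $\bR^2$ with trivial $G$-action the degree is $\degg(-Id, B(\bR^2)) = (-1)^2 \cdot \mathbb{I} = \mathbb{I}$, since the Morse index of $-Id$ on $\bR^2$ equals $2$ and the single critical point has full isotropy $G$. The product formula (Remark \ref{rem:propdeg}) then yields
\[
\degg\bigl(-Id, B(\bV \oplus \bR^{2m})\bigr) = \degg(-Id, B(\bV)) \star \mathbb{I}^{\star m} = \degg(-Id, B(\bV)),
\]
so the degree is invariant under such stabilisation.

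The substantive direction is the converse: representations with the same degree can differ only by trivial even-dimensional summands. To establish it, I would compute $\degg(-Id, B(\bV))$ in terms of the isotypical decomposition of $\bV$. After perturbing $-Id$ to a special $G$-invariant Morse gradient via Lemma \ref{lem:special}, the zeros break up into orbits indexed by isotropy classes, and each nontrivial isotypical component $m_j \bV_j$ contributes a known expression in $U(G)$ whose coefficients encode the multiplicity $m_j$.

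The key input is then Lemma 3.4 of \cite{GarRyb}, which asserts precisely that the coordinates of $\degg(-Id, B(\bV))$ in the basis $\{\chi_G(G/H^+)\}$ of $U(G)$ recover the isotypical multiplicities of $\bV$ up to the indeterminacy introduced by the trivial $\bR^2$ factor. In our setting, the equality of degrees thus forces the non-trivial isotypical components of $\bV$ and $\bV'$ to coincide, while the trivial parts $\bV^G$ and $(\bV')^G$ may differ only by an even dimension. Choosing $m,n \in \bN\cup\{0\}$ so as to balance the dimensions of these trivial parts yields that $\bV \oplus \bR^{2m}$ is $G$-equivalent to $\bV' \oplus \bR^{2n}$. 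The main obstacle is the multiplicity-recovery step, which is exactly the content of the cited lemma; all other ingredients are routine consequences of the product formula and the Morse-theoretic description of the degree.
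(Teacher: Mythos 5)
Your proposal is correct and takes essentially the same route as the paper: the paper gives no independent argument for this Fact, recording it simply as a direct corollary of Lemma 3.4 of \cite{GarRyb}, which is precisely the key input you isolate for the substantive direction. Your preliminary observations on stabilisation by trivial even-dimensional summands (via the product formula and Fact \ref{fact:-Idontriv}) are consistent with the paper but play no role in its one-line derivation from the cited lemma.
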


From the definition of the degree it easily follows:
\begin{Fact}\label{fact:-Idontriv}
If $\bV$ is a trivial $G$-representation then $\nabla_G\text{-}\deg(-Id, B(\bV))=(-1)^{\dim \bV}\cdot\bI$.
\end{Fact}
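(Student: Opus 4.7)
The plan is to realise $-Id$ as the gradient of an explicit $G$-invariant $\Omega$-Morse function and then read off the degree directly from the definitional formula \eqref{eq:stopienspec}. A natural choice is $\varphi\colon\bV\to\bR$, $\varphi(v)=-\tfrac{1}{2}\|v\|^2$, so that $\nabla\varphi=-Id$. Since the $G$-action on $\bV$ is trivial, $\varphi$ is automatically $G$-invariant, and $(\nabla\varphi)^{-1}(0)=\{0\}$, which does not meet $\partial B(\bV)$; hence $\varphi$ is $B(\bV)$-admissible.

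Next I would verify that $\varphi$ is a special $G$-invariant $B(\bV)$-Morse function. Because the action is trivial, the orbit $G(0)=\{0\}$ has dimension $0$, and $\nabla^2\varphi(0)=-Id$ has trivial kernel, so $\dim\ker\nabla^2\varphi(0)=\dim G(0)$, i.e. the orbit is non-degenerate. Using the notation of Lemma \ref{lem:hessian}, $H=G_0=G$, $\bW=(T_0G(0))^{\bot}=\bV$, and since the action is trivial $\bW^H=\bV$, so $(\bW^H)^{\bot}=\{0\}$. Consequently $B(0)=\nabla^2\varphi(0)$ and $C(0)$ acts on the zero space, so the identity $m^-(\nabla^2\varphi(0))=m^-(B(0))$ holds trivially, confirming that $\varphi$ is special.

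Finally, I would apply \eqref{eq:stopienspec}. The only critical orbit in $B(\bV)$ is $\{0\}$, with isotropy $G$ and Morse index $m^-(-Id)=\dim\bV$. Therefore
\begin{equation*}
\nabla_G\text{-}\deg_{(K)}\bigl(\nabla\varphi,B(\bV)\bigr)=
\begin{cases}(-1)^{\dim\bV},& (K)_G=(G)_G,\\ 0,& \text{otherwise,}\end{cases}
\end{equation*}
so that $\nabla_G\text{-}\deg(-Id,B(\bV))=(-1)^{\dim\bV}\cdot\chi_G(G/G^+)=(-1)^{\dim\bV}\cdot\bI$ by the definition of the unit $\bI$ in $U(G)$.

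There is no real obstacle here: the proof is a direct unwinding of definitions. The only step that deserves attention is the verification of the "special" condition, which in this trivial-representation setting reduces to a tautology because $(\bW^H)^{\bot}=\{0\}$.
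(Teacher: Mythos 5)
Your proof is correct and is exactly the argument the paper has in mind when it says the fact "easily follows from the definition of the degree": you realise $-Id$ as $\nabla\varphi$ for $\varphi(v)=-\tfrac12\|v\|^2$, check that this is a special $G$-invariant $B(\bV)$-Morse function with the single critical orbit $\{0\}$ of isotropy $G$ and Morse index $\dim\bV$, and read off the result from \eqref{eq:stopienspec}. The verification of the "special" condition via $(\bW^H)^{\bot}=\{0\}$ is the right (and only) point needing care, and you handle it correctly.
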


Now we turn to the definition of the degree for invariant strongly indefinite functionals.
Let $\bH$ be an infinite dimensional Hilbert space, which is an orthogonal $G$-representation. Moreover, assume that there exists a $G$-equivariant approximation scheme on $\bH$, i.e. a sequence of $G$-equivariant projections $\{\pi_n \colon \bH \to \bH,  n \in \bN\cup\{0\}\}$ such that the sequence $\bH^n=\pi_n(\bH)$ of finite dimensional subrepresentations of $\bH$ satisfies $\bH^{n+1}=\bH^n \oplus \bH_{n+1}$ for some subrepresentation $\bH_{n+1} \perp \bH^n$ and $\bH=cl(\bigoplus_{n=1}^{\infty} \bH_n).$
Consider the additional assumptions:
\begin{enumerate}
\item[(d1)] $\Omega \subset \bH$ is an open, bounded, $G$-invariant set,
\item[(d2)] $\Phi \in C^1(\bH, \bR)$  is $G$-invariant and $\Phi(u)=\frac{1}{2} \langle Lu, u\rangle - \eta(u)$, where
\begin{itemize}
\item[(1)] $L \colon \bH \to \bH$ is a linear, bounded, self-adjoint, $G$-equivariant Fredholm operator of index $0$, such that $\bH^0=\ker L$ and $\pi_n \circ L=L \circ \pi_n$ for all $n\in \bN \cup \{0\},$
\item[(2)] $\nabla \eta \colon \bH \to \bH$ is a $G$-equivariant completely continuous operator,
\end{itemize}
\item[(d3)] $(\nabla \Phi)^{-1}(0) \cap \partial \Omega = \emptyset.$
\end{enumerate}

Under the above assumptions, we can define the degree for $G$-invariant strongly indefinite functionals using  the degree for equivariant gradient maps. Namely, we put:
\begin{equation}\label{eq:defdeg}
\degg(\nabla \Phi, \Omega)=\degg(L, B(\bH^{n} \ominus \bH^0))^{-1} \star \degg (L-\pi_{n} \circ \nabla \eta, \Omega \cap \bH^{n})
\end{equation}
for $n$ sufficiently large, see \cite{GolRyb2011} for details. By $\bH^{n} \ominus \bH^0$ we denote the space $\{u\in\bH^n\colon \langle u,v \rangle_{\bH^n}=0 \text{ for all } v\in \bH^0\}$.
Note that we use the same symbol $\degg(\cdot, \cdot)$ for the degree for equivariant gradient maps and the degree for invariant strongly indefinite functionals.

\begin{Remark}
The definition given in \cite{GolRyb2011} is slightly different. Namely, in (d2) there is assumed that $\Phi\in C^1(cl(\Omega), \bR)$. As a consequence, the latter factor in formula \eqref{eq:defdeg} is the degree defined on some restricted set. It is easy to see that using the excision property of the degree for equivariant maps, for $\Phi\in C^1(\bH, \bR)$, one can define the degree by \eqref{eq:defdeg}.
\end{Remark}

\begin{Remark}
The degree defined by \eqref{eq:defdeg} has the properties described in Remark \ref{rem:propdeg}, i.e. the same as the degree for equivariant gradient maps.
\end{Remark}

\subsection{Index of a critical orbit}\label{subsec:indeksorbity}

In this subsection we compute the index of a critical orbit, i.e. the degree at some neighbourhood of this orbit. We consider the finite dimensional case, using the degree for equivariant gradient maps, and the infinite dimensional case with the use of the degree for invariant strongly indefinite functionals.

We start with the finite dimensional case. Let $\bV$ denote a finite dimensional orthogonal $G$-representation and  $\phi\in C^2(\bV,\bR)$ be a $G$-invariant function. Moreover assume that  $G(v_0)\subset(\nabla\phi)^{-1}(0)$ is a non-degenerate critical orbit. By the equivariant Morse lemma, see \cite{Wass}, since $G(v_0)$ is non-degenerate, we can choose a $G$-invariant open set $\Omega\subset\bV$ such that $(\nabla\phi)^{-1}(0)\cap  cl(\Omega)= G(v_0)$.  Under such assumptions $\phi$ is an $\Omega$-Morse function.

Put $\bW=(T_{v_0} G(v_0))^{\bot}$ and $H=G_{v_0}$ and recall that $\bW$ is an orthogonal representation of $H$.  Without loss of generality, we can assume that $\Omega=G\cdot B_{\epsilon}(v_0,\bW)$, where $\epsilon$ is as in Theorem \ref{thm:slice}. Define $\psi\colon B_{\epsilon}(v_0,\bW)\to\bR$ by $\psi=\phi_{|B_{\epsilon}(v_0,\bW)}$.

Note that  from the definitions of $\epsilon$ and $\psi$ we have $(\nabla\psi)^{-1}(0)= H(v_0)=\{v_0\}$. From Lemma \ref{lem:special} there exists $\Omega_0\subset B_{\epsilon}(v_0,\bW)$ and a special $H$-invariant $B_{\epsilon}(v_0,\bW)$-Morse function $\tilde{\psi}\colon B_{\epsilon}(v_0,\bW) \to\bR$ associated with $\psi$. Without loss of generality we can assume that $\Omega_0 =B_{\delta}(v_0,\bW)$ for some $\delta<\epsilon$.

Since $(\nabla\psi)^{-1}(0)\subset (\nabla\tilde{\psi})^{-1}(0)$ and $\tilde{\psi}$ is an $H$-invariant special $B_{\epsilon}(v_0,\bW)$-Morse function, we obtain by the equivariant Morse lemma, see \cite{Wass}, that there exists a finite set $\{v_1,\ldots, v_l\}\subset\bW$ such that
\begin{equation}\label{eq:krytvarphi}
(\nabla\tilde{\psi})^{-1}(0)\cap B_{\epsilon}(v_0,\bW)=\{v_0\}\cup H(v_1)\cup \ldots \cup H(v_l).
\end{equation}

From Lemma \ref{lem:hessian} we have that
\begin{equation*}
\left.\begin{array}{rccc}
 &\bW^H& &\bW^H\\
\nabla^2 \tilde{\psi}(v_0)\colon &\oplus&\to &\oplus\\
&(\bW^H)^{\bot}& &(\bW^H)^{\bot}
\end{array}\right.
\end{equation*}
has the following form
\begin{equation}\label{eq:v0}
\nabla^2 \tilde{\psi}(v_0)=\left[\begin{array}{ccc}
B(v_0)&0\\
0&C(v_0)
\end{array}\right],
\end{equation}
where $m^-(C(v_0))=0$, since $\tilde{\psi}$ is a special $B_{\epsilon}(v_0,\bW)$-Morse function.

For $i=1,\ldots,l$ put $K_i=H_{v_i}$ and $\bU_i=(T_{v_i} H(v_i))^{\bot}\subset\bW$. Using again Lemma \ref{lem:hessian} we obtain that
\begin{equation*}
\left.\begin{array}{rccc}
& T_{v_i} H(v_i)& &T_{v_i} H(v_i) \\
&\oplus& &\oplus\\
\nabla^2 \tilde{\psi}(v_i)\colon &\bU_i^{K_i}&\to&\bU_i^{K_i}\\
&\oplus& &\oplus\\
&(\bU_i^{K_i})^{\bot}& &(\bU_i^{K_i})^{\bot}
\end{array}\right.
\end{equation*}
has the following form
\begin{equation}\label{eq:vi}
\nabla^2 \tilde{\psi}(v_i)=\left[\begin{array}{ccc}
0&0&0\\
0&B(v_i)&0\\
0&0&C(v_i)
\end{array}\right],
\end{equation}
where $m^-(C(v_i))=0$, again because $\tilde{\psi}$ is a special $B_{\epsilon}(v_0,\bW)$-Morse function.

Define $\tilde{\phi} \colon G\cdot B_{\epsilon}(v_0,\bW)\to \bR$ by
\begin{equation}\label{eq:defphi}
\tilde{\phi}(gw) =\tilde{\psi}(w).
\end{equation}
Note that this function is well-defined. Indeed, if $g_1w_1=g_2w_2$, then $w_2 =g_2^{-1}g_1 w_1$ and therefore $g_2^{-1}g_1\in H$ by Theorem \ref{thm:slice2} and hence, by the $H$-invariance of $\tilde{\psi}$,
\[
\tilde{\phi}(g_1w_1)=\tilde{\psi}(w_1)=\tilde{\psi}(g_2^{-1}g_1 w_1)=\tilde{\psi}(w_2)=\tilde{\phi}(g_2w_2).
\]
It is easy to see that $\nabla\tilde{\phi}(gw)=g\nabla \tilde{\psi}(w)$.

Recall that
$
(\nabla\tilde{\psi})^{-1}(0)\cap B_{\epsilon}(v_0,\bW)=\{v_0\}\cup H(v_1)\cup \ldots \cup H(v_l).
$

\begin{Lemma}\label{lem:tildephi0}
The only critical orbits of $\tilde{\phi}$ on $\Omega$ are $G(v_0), G(v_1), \ldots, G(v_l)$. Moreover, $G(v_i)\subset G\cdot B_{\delta}(v_0,\bW)$ for every $i=0,1,\ldots, l$.
\end{Lemma}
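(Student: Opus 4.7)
The plan is to reduce the computation of critical points of $\tilde{\phi}$ on $\Omega = G \cdot B_\epsilon(v_0, \bW)$ to the computation of critical points of $\tilde{\psi}$ on the slice $B_\epsilon(v_0, \bW)$, and then to read off the answer from \eqref{eq:krytvarphi}. The key identity is the one noted just after \eqref{eq:defphi}, namely $\nabla \tilde{\phi}(gw) = g \nabla \tilde{\psi}(w)$. Since $G$ acts orthogonally on $\bV$, each $g \in G$ is a linear isometry, so $g \nabla \tilde{\psi}(w) = 0$ if and only if $\nabla \tilde{\psi}(w) = 0$.

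First I would use the Slice Theorem \ref{thm:slice} to represent every point of $\Omega$ as $gw$ with $g \in G$ and $w \in B_\epsilon(v_0, \bW)$. Combining this with the key identity, the zero set of $\nabla \tilde{\phi}$ on $\Omega$ is precisely $G \cdot \bigl( (\nabla \tilde{\psi})^{-1}(0) \cap B_\epsilon(v_0, \bW) \bigr)$. By \eqref{eq:krytvarphi}, this set equals $G(v_0) \cup G\cdot H(v_1) \cup \ldots \cup G \cdot H(v_l)$; and since $H \subset G$, we have $G \cdot H(v_i) = G(v_i)$. Hence the critical orbits in $\Omega$ are exactly $G(v_0), G(v_1), \ldots, G(v_l)$, as claimed. (Some of these orbits could a priori coincide when viewed as $G$-sets, but that does not affect the statement of the lemma.)

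For the second assertion, recall that $\tilde{\psi}$ is a special $H$-invariant $B_\epsilon(v_0,\bW)$-Morse function associated with $\psi$, and $\Omega_0 = B_\delta(v_0,\bW)$. By condition (1) of Lemma \ref{lem:special} applied to $\tilde{\psi}$, every zero of $\nabla\tilde{\psi}$ in $B_\epsilon(v_0,\bW)$ lies in $\Omega_0 = B_\delta(v_0,\bW)$; in particular $v_0, v_1, \ldots, v_l \in B_\delta(v_0,\bW)$. Therefore $G(v_i) \subset G \cdot B_\delta(v_0, \bW)$ for every $i = 0, 1, \ldots, l$.

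There is no real obstacle here: the entire argument is an unpacking of the definition \eqref{eq:defphi} of $\tilde{\phi}$ together with the slice description of $\Omega$. The only point that requires mild care is the passage from the $H$-orbits $H(v_i) \subset \bW$ appearing on the slice to the $G$-orbits $G(v_i) \subset \bV$, which is resolved by the identity $G \cdot H(v_i) = G(v_i)$ used above.
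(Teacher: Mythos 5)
Your proof is correct and follows essentially the same route as the paper: both rest on the identity $\nabla\tilde{\phi}(gw)=g\nabla\tilde{\psi}(w)$ together with the slice description of $\Omega$ to reduce to \eqref{eq:krytvarphi}, and both deduce the second assertion from condition (1) of Lemma \ref{lem:special} applied to $\tilde{\psi}$ (the paper merely spells out $\|gw-gv_0\|=\|w-v_0\|<\delta$ via orthogonality, which your direct appeal to the definition of $G\cdot B_{\delta}(v_0,\bW)$ replaces). No gaps.
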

\begin{proof}
It is easy to see that $G(v_i)\subset (\nabla\tilde{\phi})^{-1}(0)\cap\Omega$ for $i=0,1,\ldots, l$.

On the other hand, if $v\in (\nabla\tilde{\phi})^{-1}(0)\cap\Omega$, then, by the definition of $\Omega$, there exist $g\in G$ and $w\in B_{\epsilon}(v_0,\bW)$ such that $v=gw$. Therefore
\[
0=\nabla\tilde{\phi}(v)=\nabla\tilde{\phi}(gw)=g\nabla\tilde{\psi}(w)
\]
and hence $w\in(\nabla\tilde{\psi})^{-1}(0)\cap B_{\epsilon}(v_0,\bW) =\{v_0\}\cup H(v_1)\cup \ldots \cup H(v_l)$. Consequently $v\in G(v_i)$ for some $i=0,1,\ldots,l$.

To prove that $(\nabla\tilde{\phi})^{-1}(0)\cap \Omega\subset G\cdot B_{\delta}(v_0,\bW)$ fix $v=gw\in(\nabla\tilde{\phi})^{-1}(0)\cap \Omega$, $w\in B_{\epsilon}(v_0,\bW)$, $g\in G$, and note that,
by Lemma \ref{lem:special}, $w\in B_{\delta}(v_0,\bW)$, i.e. $\|w-v_0\|<\delta$. Therefore $\|gw-gv_0\|=\|w-v_0\|<\delta$, since $\bV$ is an orthogonal representation of $G$.
\end{proof}

Recall that by $K_i$ we denote the isotropy group $H_{v_i}.$

\begin{Lemma}\label{lem:IsGr}
The isotropy group of any element of the critical orbit $G(v_i)$ is conjugated in $G$ to the group $K_i$ for $i=0,1,\ldots,l$.
\end{Lemma}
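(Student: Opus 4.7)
The plan is to prove this in two steps: first establish that $G_{v_i}=K_i$ for each $i$, and then invoke the standard conjugacy of isotropy groups along an orbit.

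\textbf{Step 1: Show $G_{v_i}=K_i$.} The inclusion $K_i\subseteq G_{v_i}$ is immediate from the definition, since $K_i=H_{v_i}$ and $H\subseteq G$ (for $i=0$ this reads $K_0=H_{v_0}=H$, using that $v_0\in \bW^H$, in fact $H=G_{v_0}$ by definition). For the reverse inclusion, I will use the slice theorem. Suppose $g\in G_{v_i}$. Since $v_i\in B_\epsilon(v_0,\bW)$ and $gv_i=v_i$, the point $v_i$ lies in both $g\cdot B_\epsilon(v_0,\bW)$ and $B_\epsilon(v_0,\bW)$, so these sets have non-empty intersection. Theorem \ref{thm:slice2} then forces $g\in G_{v_0}=H$. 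Combined with $g\in G_{v_i}$, this gives $g\in H\cap G_{v_i}=H_{v_i}=K_i$.

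\textbf{Step 2: Propagate along the orbit.} Any $v\in G(v_i)$ can be written as $v=gv_i$ for some $g\in G$. A standard computation shows
\[
G_v = G_{gv_i} = gG_{v_i}g^{-1} = gK_ig^{-1},
\]
where the last equality uses Step 1. Hence $G_v$ is conjugate in $G$ to $K_i$, as required.

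The only subtle point is Step 1, and the key ingredient there is Theorem \ref{thm:slice2}: without it one would not be able to pass from the $G$-isotropy to the $H$-isotropy of $v_i$. Everything else is formal, and the argument applies uniformly to $i=0,1,\ldots,l$ (for $i=0$ it degenerates to the trivial statement $G_{v_0}=H=K_0$).
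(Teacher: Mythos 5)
Your proof is correct, and it takes a genuinely different (and more elementary) route than the paper. The paper never computes the isotropy group of $v_i$ itself; instead it identifies the whole orbit up to $G$-homeomorphism: it writes $G(v_i)=\theta(G\times_H H(v_i))$ using the slice diffeomorphism, uses $H(v_i)\cong H/K_i$ as $H$-spaces, and then the standard identification $G\times_H(H/K_i)\cong G/K_i$, concluding via the fact (Lemma 1.62 of Kawakubo) that every isotropy group of a space $G$-homeomorphic to $G/K_i$ lies in $(K_i)_G$. You instead pin down $G_{v_i}$ exactly: Theorem \ref{thm:slice2} applied to $gv_i=v_i$ (legitimate, since $v_i\in B_\epsilon(v_0,\bW)$ by \eqref{eq:krytvarphi}) forces $g\in H$, hence $G_{v_i}=H_{v_i}=K_i$, and then $G_{gv_i}=gG_{v_i}g^{-1}$ finishes the job. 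Your argument is shorter, avoids the twisted-product machinery and the external references to Kawakubo, and yields the slightly sharper statement $G_{v_i}=K_i$ (equality, not just conjugacy); the paper's argument buys the stronger structural conclusion that $G(v_i)$ is $G$-homeomorphic to $G/K_i$, which is in the spirit of the twisted-space framework set up in Subsection \ref{subsec:twisted}. Both ultimately rest on the slice theorem, just accessed through different corollaries.
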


\begin{proof}
To prove the lemma it is enough to show that  the orbit $G(v_i)$ is $G$-homeomorphic with $G/K_i$, for $i=1,\ldots,l$. This will imply that $(G_{v})_G=(K_i)_G$ for all $v\in G(v_i)$, see Lemma 1.62 of \cite{Kawakubo}.

It is easy to see that $G(v_i) =\theta(G\times_H H(v_i))$, where $\theta$ is the $G$-diffeomorphism given in Theorem \ref{thm:slice}. Since $H(v_i)$ and $H/K_i$ are $H$-homeomorphic (see for example Proposition 1.53 of \cite{Kawakubo}), we obtain that $G\times_H H(v_i)$ is $G$-homeomorphic with $G\times_H (H/K_i)$. To finish the proof note that  from Proposition 1.89 of \cite{Kawakubo} we obtain that $G\times_H (H/K_i)$ is $G$-homeomorphic with $G/K_i$.
\end{proof}

Consider the extension of $\tilde{\phi}$ to the space $\bV$ given by $\tilde{\phi}(v)=\phi(v)$ on $\bV \setminus \Omega$.

\begin{Lemma}\label{lem:tildephi}
The function $\tilde{\phi}$ is a special $G$-invariant $\Omega$-Morse function associated with $\phi$. Moreover, $m^-(\nabla^2 \tilde{\phi}(v_i))=m^-(\nabla^2 \tilde{\psi}(v_i))$ for $i=0,1,\ldots, l$.
\end{Lemma}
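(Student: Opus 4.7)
The plan has three parts: verify that $\tilde\phi$ is a well-defined, $G$-invariant, $C^2$ function associated with $\phi$; compute $\nabla^2\tilde\phi(v_i)$ via the slice theorem; and check the special condition.

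First I would verify well-definedness. On $\Omega$, the formula $\tilde\phi(gw)=\tilde\psi(w)$ is consistent by Theorem \ref{thm:slice2} together with the $H$-invariance of $\tilde\psi$; on the shell $G\cdot(B_{\epsilon}(v_0,\bW)\setminus B_{\delta}(v_0,\bW))$ the two branches of the definition agree because $\tilde\psi=\psi=\phi|_{B_{\epsilon}(v_0,\bW)}$ there (by condition (2) of $\tilde\psi$ being associated with $\psi$) and $\phi$ is $G$-invariant. $G$-invariance of $\tilde\phi$ is immediate from the construction. With $\Omega_0:=G\cdot B_{\delta}(v_0,\bW)$, the three properties of ``associated with $\phi$'' then follow: (1) from Lemma \ref{lem:tildephi0}, (2) from the shell computation just made, and (3) because $(\nabla\phi)^{-1}(0)\cap\Omega=G(v_0)\subset(\nabla\tilde\phi)^{-1}(0)$ by Lemma \ref{lem:tildephi0} while $\tilde\phi=\phi$ off $\Omega_0$.

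To compute $\nabla^2\tilde\phi(v_i)$, I would transport the problem across the slice diffeomorphism $\theta:G\times_H B_{\epsilon}(v_0,\bW)\to\Omega$. Writing $\mathfrak{m}=\mathfrak{h}^{\perp}\subset\mathfrak{g}$, the map $(X,w)\mapsto[\exp X,w]$ is a local chart near $[e,v_i]$, and $\tilde\phi\circ\theta$ in this chart is simply $(X,w)\mapsto\tilde\psi(w)$, which depends only on $w$. Hence its Hessian at $(0,v_i)$ is block-diagonal $\operatorname{diag}(0_{\mathfrak{m}},\nabla^2\tilde\psi(v_i))$. The composed differential $d\theta_{[e,v_i]}\colon\mathfrak{m}\oplus\bW\to\bV$ acts by $(Y,u)\mapsto Yv_i+u$ and is a $K_i$-equivariant linear isomorphism, where $K_i$ acts on $\mathfrak{m}$ via $\operatorname{Ad}$ and on $\bW$ by restriction. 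Since the Morse index is invariant under linear isomorphism, this already yields $m^-(\nabla^2\tilde\phi(v_i))=m^-(\nabla^2\tilde\psi(v_i))$, the ``moreover'' assertion. Moreover the kernel of the pulled-back Hessian is $\mathfrak{m}\oplus\ker\nabla^2\tilde\psi(v_i)=\mathfrak{m}\oplus T_{v_i}H(v_i)$, which is sent by $d\theta$ onto $\mathfrak{m}v_i+T_{v_i}H(v_i)=\mathfrak{g}v_i=T_{v_i}G(v_i)$, so $\dim\ker\nabla^2\tilde\phi(v_i)=\dim G(v_i)$ and $G(v_i)$ is non-degenerate.

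For the special condition $m^-(\nabla^2\tilde\phi(v_i))=m^-(B(v_i))$, equivalently $m^-(C(v_i))=0$, I would split $\bV$ into its $K_i$-isotypical components $\bV=\bV^{K_i}\oplus\bV'$ (with $\bV'$ the sum of non-trivial components). By $K_i$-equivariance of $d\theta$, the subspace $\bV'$ corresponds to $\mathfrak{m}'\oplus\bW'$, and the pulled-back Hessian on $\bV'$ is $0\oplus\nabla^2\tilde\psi(v_i)|_{\bW'}$. Specialness of $\tilde\psi$ as an $H$-invariant Morse function means all its negative directions lie in $\bU_i^{K_i}\subset\bW^{K_i}$, so the restriction to the complementary isotypical piece $\bW'$ is positive semidefinite. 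Because the slice $(T_{v_i}G(v_i))^{\perp}$ is $K_i$-invariant, its intersection with $\bV'$ is exactly the orthogonal complement of $((T_{v_i}G(v_i))^{\perp})^{K_i}$ inside the slice; combined with the vanishing of $\nabla^2\tilde\phi(v_i)$ on $T_{v_i}G(v_i)$, this gives $m^-(C(v_i))=0$.

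The main obstacle is this last step: one has to track the $K_i$-isotypical decomposition through the $K_i$-equivariant but non-orthogonal identification $d\theta$ and then compare it with the intrinsic orthogonal splitting from Lemma \ref{lem:hessian}. Once this identification is set up, both the ``moreover'' statement and the specialness follow by elementary linear algebra.
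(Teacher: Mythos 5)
Your proposal is correct and follows essentially the same route as the paper: both verify conditions (1)--(3) of Lemma \ref{lem:special} directly from the construction and Lemma \ref{lem:tildephi0}, and both obtain $\nabla^2\tilde{\phi}(v_i)$ by transporting $\nabla^2\tilde{\psi}(v_i)$ across the tube $G\cdot B_{\epsilon}(v_0,\bW)$, reading off $m^-(\nabla^2\tilde{\phi}(v_i))=m^-(\nabla^2\tilde{\psi}(v_i))=m^-(B(v_i))$ from the resulting block structure. The only difference is one of care: the paper asserts the splitting $\bV=T_{v_i}G(v_i)\oplus\bU_i^{K_i}\oplus(\bU_i^{K_i})^{\bot}$ and the block form \eqref{eq:phivi} directly, whereas you pass through the (non-orthogonal) slice chart and invoke Sylvester's law of inertia, which is a slightly more robust justification of the same computation.
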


\begin{proof}

From Lemma \ref{lem:tildephi0} it follows that $(\nabla\tilde{\phi})^{-1}(0)\cap \Omega=G(v_0)\cup G(v_1)\cup \ldots \cup G(v_l)$. Note that for $i=0,1,\ldots,l$ we have $\bV= T_{v_0}G(v_0)\oplus \bW = T_{v_0}G(v_0)\oplus T_{v_i}H(v_i)\oplus \bU_i$, where $\bU_i=(T_{v_i}H(v_i))^{\bot}\subset\bW$. Moreover $ T_{v_0}G(v_0)\oplus T_{v_i}H(v_i)=T_{v_i}G(v_i)$. Hence, using the formulae \eqref{eq:v0}, \eqref{eq:vi}, we obtain that, for $i=0,1,\ldots,l$,
\begin{equation*}
\left.\begin{array}{rccc}
& T_{v_i} G(v_i)& &T_{v_i} G(v_i) \\
&\oplus& &\oplus\\
\nabla^2 \tilde{\phi}(v_i)\colon &\bU_i^{K_i}&\to&\bU_i^{K_i}\\
&\oplus& &\oplus\\
&(\bU_i^{K_i})^{\bot}& &(\bU_i^{K_i})^{\bot}
\end{array}\right.
\end{equation*}
has the following form
\begin{equation}\label{eq:phivi}
\nabla^2  \tilde{\phi}(v_i)=\left[\begin{array}{ccc}
0&0&0\\
0&B(v_i)&0\\
0&0&C(v_i)
\end{array}\right],
\end{equation}
where $m^-(C(v_i))=0$. This proves that $\tilde{\phi}$ is a special $G$-invariant $\Omega$-Morse function.

To prove that $\tilde{\phi}$ is associated with $\phi$ we need to show that the conditions (1)--(3) of Lemma \ref{lem:special} are satisfied. Note that in Lemma \ref{lem:tildephi0} we have obtained that
$(\nabla\tilde{\phi})^{-1}(0)\cap\Omega\subset \Omega_0$, i.e. we have shown (1).
To prove that $\tilde{\phi}=\phi$ on $\bV\setminus\Omega_0$ we first observe that from the definition of $\tilde{\phi}$ it follows that $\tilde{\phi}=\phi$ on $\bV\setminus\Omega$. Moreover, by the definition of $\tilde{\psi}$, we obtain that $\tilde{\psi}=\psi$ on $B_{\epsilon}(v_0,\bW)\setminus B_{\delta}(v_0,\bW)$ and therefore, for $gw\in \Omega\setminus\Omega_0$,
\[
\tilde{\phi}(gw)=\tilde{\psi}(w)=\psi(w)=\phi(w)=\phi(gw).
\]
This proves (2). Using again the above equality and Lemma \ref{lem:tildephi0} we obtain (3).

To finish the proof note that since $\tilde{\psi}$ is a special $H$-invariant $B_{\epsilon}(v_0,\bW)$-Morse function,  from the formulae \eqref{eq:v0}, \eqref{eq:vi} it follows that $m^-(\nabla^2 \tilde{\psi}(v_i))=m^-(B(v_i))$
for $i=0,1,\ldots, l$. On the other hand, since $\tilde{\phi}$ is a special $G$-invariant $\Omega$-Morse function, we obtain from the formula \eqref{eq:phivi} that $m^-(\nabla^2 \tilde{\phi}(v_i))=m^-(B(v_i))$. Hence $m^-(\nabla^2 \tilde{\phi}(v_i))=m^-(\nabla^2 \tilde{\psi}(v_i))$.
\end{proof}

In the above lemmas we have shown that $\tilde{\phi}$ is a special $G$-invariant $\Omega$-Morse function associated with $\phi$. This allows us to compute the degree $\nabla_G\text{-}\deg(\nabla\phi,\Omega)$. Namely, we have the following:

\begin{Corollary}\label{cor:degree}
From the definition of the degree for $G$-invariant $\Omega$-Morse functions and Lemma \ref{lem:tildephi} we obtain:
\begin{equation*}
\nabla_G\text{-}\deg\left(\nabla\phi,\Omega\right)=\nabla_G\text{-}\deg\left( \nabla\tilde{\phi},\Omega\right).
\end{equation*}
\end{Corollary}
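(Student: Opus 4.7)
The plan is to observe that this statement is essentially a direct invocation of the definition of $\nabla_G\text{-}\deg$ for $G$-invariant $\Omega$-Morse functions, combined with the verification already carried out in Lemma \ref{lem:tildephi}.

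Recall from Subsection \ref{subsec:defdegree} that for a $G$-invariant $\Omega$-Morse function $\phi$, the degree $\nabla_G\text{-}\deg(\nabla\phi,\Omega)$ is \emph{defined} to be $\nabla_G\text{-}\deg(\nabla\tilde{\phi},\Omega)$ for any special $G$-invariant $\Omega$-Morse function $\tilde{\phi}$ associated with $\phi$ in the sense of Remark \ref{rem:special}. Well-definedness of this assignment is guaranteed because any two such associated functions agree on $\bV\setminus\Omega_0$ (condition (2) of Lemma \ref{lem:special}), hence are $\Omega$-homotopic via the straight-line homotopy, so Theorem \ref{thm:homotopic} gives them the same degree.

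Consequently, to establish the corollary I need only point to Lemma \ref{lem:tildephi}, where it was proved that the function $\tilde{\phi}\colon\bV\to\bR$ constructed via \eqref{eq:defphi} (extended by $\phi$ outside $\Omega$) is indeed a special $G$-invariant $\Omega$-Morse function associated with $\phi$. Inserting this particular $\tilde{\phi}$ into the defining formula yields
\begin{equation*}
\nabla_G\text{-}\deg(\nabla\phi,\Omega)=\nabla_G\text{-}\deg(\nabla\tilde{\phi},\Omega),
\end{equation*}
which is the claim.

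There is no real obstacle here; the corollary is a bookkeeping step whose purpose is simply to record that the twisted-product construction from \eqref{eq:defphi} produces a legitimate candidate for computing the degree of $\phi$ on a tubular neighbourhood of the critical orbit $G(v_0)$. All the substantive work---showing non-degeneracy, locating the critical orbits, identifying their isotropy types, and matching the Morse indices---has already been done in Lemmas \ref{lem:tildephi0}, \ref{lem:IsGr}, and \ref{lem:tildephi}. The present corollary simply packages this so that subsequent computations of $\nabla_G\text{-}\deg(\nabla\phi,\Omega)$ can be reduced, via formula \eqref{eq:stopienspec}, to counting the critical orbits $G(v_0),\ldots,G(v_l)$ of $\tilde{\phi}$ together with their Morse co-indices on the slice.
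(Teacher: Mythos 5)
Your proposal is correct and matches the paper's intent exactly: the paper offers no separate proof, since the corollary is immediate from the definition $\nabla_G\text{-}\deg(\nabla\phi,\Omega)=\nabla_G\text{-}\deg(\nabla\tilde{\phi},\Omega)$ for any associated special Morse function (well-defined by Remark \ref{rem:special} and Theorem \ref{thm:homotopic}), combined with Lemma \ref{lem:tildephi} establishing that the twisted-product construction yields such an associated function. Your additional remarks on well-definedness and the role of the corollary as a bookkeeping step are accurate and consistent with the paper.
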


Using Lemmas \ref{lem:tildephi0}--\ref{lem:tildephi} and Corollary \ref{cor:degree} we can obtain formulae for the coordinates of the degree $\nabla_G\text{-}\deg\left(\nabla\phi,\Omega\right)$. We determine them with the use of the restriction $\psi$ of $\phi$ to the space normal to the orbit.

\begin{Theorem}\label{thm:formuladeg} Assume as before that
$(\nabla\tilde{\psi})^{-1}(0)\cap B_{\epsilon}(v_0,\bW)=\{v_0\}\cup H(v_1)\cup \ldots \cup H(v_l).$ Fix $(K)_G\in\sub[G] $ and let $B(v_i)$ be given by \eqref{eq:v0} and \eqref{eq:vi} for $i=0,1, \ldots, l$. Then
\begin{equation*}
\nabla_G\text{-}\deg_{(K)}\left(\nabla\phi,\Omega\right)=\sum\limits_{(H_{v_i})_G=(K)_G} (-1)^{m^-\left(B(v_i)\right)} \in \bZ.
\end{equation*}
\end{Theorem}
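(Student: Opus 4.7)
The proof will be essentially an assembly of the preceding lemmas, since most of the real work has already been done. The plan is to evaluate $\nabla_G\text{-}\deg(\nabla\phi,\Omega)$ by replacing $\phi$ with the special Morse function $\tilde{\phi}$ constructed above and then applying the definition \eqref{eq:stopienspec} coordinatewise.

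First, by Corollary \ref{cor:degree} we may replace $\phi$ by $\tilde{\phi}$, so the task reduces to reading off the $(K)_G$-coordinate of $\nabla_G\text{-}\deg(\nabla\tilde{\phi},\Omega)$. By Lemma \ref{lem:tildephi} the function $\tilde{\phi}$ is a special $G$-invariant $\Omega$-Morse function, so the definition in \eqref{eq:stopienspec} is applicable. Moreover, by Lemma \ref{lem:tildephi0} the critical set is exactly $G(v_0)\cup G(v_1)\cup\ldots\cup G(v_l)$, so one can choose $\{v_0,v_1,\ldots,v_l\}$ as the set $\mathcal{V}$ of orbit representatives entering the sum in \eqref{eq:stopienspec}.

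Next, I would identify the isotropy class of each representative: by Lemma \ref{lem:IsGr} the isotropy group of any point of $G(v_i)$ is conjugate in $G$ to $K_i=H_{v_i}$, so the condition $(G_{v_i})_G=(K)_G$ in \eqref{eq:stopienspec} becomes $(H_{v_i})_G=(K)_G$. Finally, Lemma \ref{lem:tildephi} gives $m^-(\nabla^2\tilde{\phi}(v_i))=m^-(\nabla^2\tilde{\psi}(v_i))$, and the block forms \eqref{eq:v0} and \eqref{eq:vi} (with $m^-(C(v_i))=0$ since $\tilde{\psi}$ is special) yield $m^-(\nabla^2\tilde{\psi}(v_i))=m^-(B(v_i))$. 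Substituting all of this into \eqref{eq:stopienspec} produces the claimed formula.

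There is no substantive obstacle at this stage: the delicate points, namely the compatibility of the special Morse perturbation $\tilde{\psi}$ on the slice with its twisted extension $\tilde{\phi}$, the preservation of the Morse index, and the identification of isotropy via the slice theorem, have all been resolved in Lemmas \ref{lem:tildephi0}--\ref{lem:tildephi} and Corollary \ref{cor:degree}. Thus the proof is essentially a direct citation of these facts together with the defining formula \eqref{eq:stopienspec}.
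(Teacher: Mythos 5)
Your argument is correct and follows essentially the same route as the paper's own proof: reduce to $\tilde{\phi}$ via Corollary \ref{cor:degree}, apply the defining formula \eqref{eq:stopienspec} using Lemma \ref{lem:tildephi0}, identify isotropy classes via Lemma \ref{lem:IsGr}, and match Morse indices with $m^-(B(v_i))$ via Lemma \ref{lem:tildephi}. The only cosmetic difference is that you pass through $m^-(\nabla^2\tilde{\psi}(v_i))$ while the paper reads $m^-(\nabla^2\tilde{\phi}(v_i))=m^-(B(v_i))$ directly from \eqref{eq:phivi}; these are equivalent.
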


\begin{proof}
From Corollary \ref{cor:degree} it follows that we can compute $\nabla_G\text{-}\deg_{(K)}( \nabla\tilde{\phi},\Omega)$ instead of $\nabla_G\text{-}\deg_{(K)}\left(\nabla\phi,\Omega\right)$.
From the formula \eqref{eq:stopienspec} and Lemma \ref{lem:tildephi0} we obtain
\[
\nabla_G\text{-}\deg_{(K)}\left(\nabla\tilde{\phi},\Omega\right)=\sum\limits_{(G_{v_i})_G=(K)_G} (-1)^{m^-\left(\nabla^2\tilde{\phi}(v_i)\right)} \in \bZ.
\]
From Lemma \ref{lem:IsGr} we have $(G_{v_i})_G=(K_i)_G=(H_{v_i})_G$.  Moreover, Lemma \ref{lem:tildephi} and the formula \eqref{eq:phivi} imply that $m^-(\nabla^2\tilde{\phi}(v_i))=m^-(B(v_i))$. Thus
\[
\nabla_G\text{-}\deg_{(K)}\left(\nabla\tilde{\phi},\Omega\right)=\sum\limits_{ (H_{v_i})_G=(K)_G} (-1)^{m^-\left(B(v_i)\right)} \in \bZ.
\]
This completes the proof.
\end{proof}

Now we are going to analyse when the formula for $\nabla_G\text{-}\deg\left( \nabla\phi,\Omega\right)$ can be obtained from the formula for $\nabla_H\text{-}\deg\left(\nabla\psi,B_{\epsilon}(v_0,\bW)\right)$. In general, the coordinates of the $G$-degree of $\phi$ do not have to be in one-to-one correspondence with the coordinates of the $H$-degree of its restriction. We are going to study when there is such a correspondence. To do this we consider the so called admissible pairs of groups, introduced in \cite{PRS}.
Below we recall the notion of such pairs.

\begin{Definition}
Let $H$ be a closed subgroup of $G$. A pair $(G,H)$ is called admissible, if for any closed subgroups $H_1,H_2\subset H$ the following implication is satisfied: if $(H_1)_H\neq (H_2)_H$, then $(H_1)_G\neq (H_2)_G$.
\end{Definition}

The following lemma has been proved in \cite{GolKluSte} (Lemma 2.8):

\begin{Lemma}\label{lem:admissible}
The pair $(\Gamma\times SO(N), \{e\}\times SO(N))$ is admissible.
\end{Lemma}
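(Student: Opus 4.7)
The plan is to prove the admissibility by the contrapositive: show that any two closed subgroups $H_1, H_2$ of $H := \{e\} \times SO(N)$ that are conjugate in $G := \Gamma \times SO(N)$ are already conjugate in $H$. The key structural observation that makes this go through is that $\{e\} \times SO(N)$ is a direct factor of $\Gamma \times SO(N)$, so conjugation by a general element $(\gamma, g) \in G$ acts on the $SO(N)$ slot exactly like conjugation by $g$, and does nothing nontrivial on the $\Gamma$ slot beyond mapping $e$ to $e$.

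First I would write any closed subgroup $H_i \subset H$ in the form $H_i = \{e\} \times K_i$ for a closed subgroup $K_i \subset SO(N)$; this uses only that $H = \{e\} \times SO(N)$. Second, I would suppose $(H_1)_G = (H_2)_G$, i.e.\ there exists $(\gamma, g) \in \Gamma \times SO(N)$ with
\[
(\gamma, g) H_1 (\gamma, g)^{-1} = H_2,
\]
and compute elementwise: for every $k_1 \in K_1$,
\[
(\gamma, g)(e, k_1)(\gamma^{-1}, g^{-1}) = (\gamma e \gamma^{-1}, g k_1 g^{-1}) = (e, g k_1 g^{-1}).
\]
Hence the conjugate subgroup is $\{e\} \times g K_1 g^{-1}$, and comparing with $H_2 = \{e\} \times K_2$ yields $g K_1 g^{-1} = K_2$.

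Finally I would observe that the element $(e, g) \in \{e\} \times SO(N) = H$ then satisfies
\[
(e, g) H_1 (e, g)^{-1} = \{e\} \times g K_1 g^{-1} = \{e\} \times K_2 = H_2,
\]
which shows $(H_1)_H = (H_2)_H$. Taking the contrapositive gives the required implication $(H_1)_H \neq (H_2)_H \Rightarrow (H_1)_G \neq (H_2)_G$, so $(G, H)$ is admissible.

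I do not foresee a substantive obstacle here; the proof is essentially a one-line calculation enabled by the direct product structure. The only thing to be careful about is to carry the element $(\gamma, g)$ symbolically and to notice that the $\Gamma$-component plays no role because it can only conjugate $e$ to $e$, so it can be discarded in favor of the purely $SO(N)$-conjugator $(e, g) \in H$.
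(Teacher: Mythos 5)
Your proof is correct: every closed subgroup of $\{e\}\times SO(N)$ is of the form $\{e\}\times K$, and conjugation by $(\gamma,g)\in\Gamma\times SO(N)$ sends it to $\{e\}\times gKg^{-1}$, so conjugacy in $G$ already forces conjugacy in $H$ via $(e,g)$. The paper itself does not prove this lemma but cites Lemma 2.8 of \cite{GolKluSte}, where the argument is this same direct-product computation, so your proposal matches the intended proof.
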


Denote by $(K_{i_0})_H,(K_{i_1})_H,\ldots,(K_{i_s})_H$ all the possible conjugacy classes of the groups $K_0, K_1, \ldots, K_l$ 
and put
\[
m_j=\nabla_H\text{-}\deg_{(K_{i_j})}\left(\nabla\psi,B_{\epsilon}(v_0,\bW)\right)= \sum\limits_{(K_{i})_H=(K_{i_j})_H} (-1)^{m^-\left(\nabla^2\tilde{\psi}(v_i)\right)} \in \bZ
\]
for $j=0,1,\ldots,s$. Then, from the definition of the degree,
\begin{equation*}
\nabla_H\text{-}\deg\left(\nabla\psi,B_{\epsilon}(v_0,\bW)\right)=\sum\limits_{j=0}^s m_j\cdot \chi_H\left(H/K_{i_j}^+\right)\in U(H).
\end{equation*}

\begin{Theorem}
Under the above notations and assumptions, if the pair $(G,H)$ is admissible,
then
\begin{equation*}
\nabla_G\text{-}\deg\left(\nabla\phi,\Omega\right)=\sum\limits_{j=0}^s m_j\cdot \chi_G\left(G/K_{i_j}^+\right)\in U(G).
\end{equation*}
\end{Theorem}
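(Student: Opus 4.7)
The plan is to read off each $G$-coordinate of $\nabla_G\text{-}\deg(\nabla\phi,\Omega)$ using Theorem \ref{thm:formuladeg} and then, via admissibility, identify those coordinates with the $H$-coordinates $m_j$ coming from the restriction to the slice. First I would apply Theorem \ref{thm:formuladeg} to $\phi$ on $\Omega=G\cdot B_{\epsilon}(v_0,\bW)$. For every $(K)_G\in\sub[G]$ it gives
\[
\nabla_G\text{-}\deg_{(K)}(\nabla\phi,\Omega)=\sum_{(H_{v_i})_G=(K)_G}(-1)^{m^-(B(v_i))},
\]
where the sum runs over $i\in\{0,1,\ldots,l\}$. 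Since $H_{v_i}=K_i\subset H$ for every such $i$, only the $G$-conjugacy classes of the subgroups $K_0,\ldots,K_l$ can give a nonzero coordinate.

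Next I would exploit the admissibility of $(G,H)$ to pass between $G$- and $H$-conjugacy on these subgroups. Admissibility asserts that for closed $H_1,H_2\subset H$, if $(H_1)_H\neq(H_2)_H$ then $(H_1)_G\neq(H_2)_G$; the converse implication is automatic, so on subgroups of $H$ the two conjugacy relations coincide. Applied to the family $K_0,\ldots,K_l$, this shows that the distinct $G$-conjugacy classes among them are precisely $(K_{i_0})_G,\ldots,(K_{i_s})_G$, and for each $j$
\[
\{i\in\{0,\ldots,l\}:(K_i)_G=(K_{i_j})_G\}=\{i\in\{0,\ldots,l\}:(K_i)_H=(K_{i_j})_H\}.
\]

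To conclude, I would combine these two observations. For each $j\in\{0,\ldots,s\}$,
\[
\nabla_G\text{-}\deg_{(K_{i_j})}(\nabla\phi,\Omega)=\sum_{(K_i)_H=(K_{i_j})_H}(-1)^{m^-(B(v_i))}=m_j,
\]
the last equality being the definition of $m_j$ together with the identity $m^-(\nabla^2\tilde{\psi}(v_i))=m^-(B(v_i))$ that holds because $\tilde{\psi}$ is a special $H$-invariant Morse function (Lemma \ref{lem:hessian} combined with the formulae \eqref{eq:v0} and \eqref{eq:vi}). Every other coordinate of $\nabla_G\text{-}\deg(\nabla\phi,\Omega)$ vanishes, since no $H_{v_i}$ is $G$-conjugate to any subgroup whose class lies outside $\{(K_{i_0})_G,\ldots,(K_{i_s})_G\}$. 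Summing over $j$ yields the desired identity in $U(G)$.

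The critical step is the admissibility argument: without it, two distinct $H$-conjugacy classes among $K_0,\ldots,K_l$ could collapse to a single $G$-class, in which case the $G$-coordinate indexed by that class would be the sum of several $m_j$'s rather than just one, destroying the clean bijective match between the $H$-degree on the slice and the $G$-degree on $\Omega$. Everything else is formal once Theorem \ref{thm:formuladeg} is available.
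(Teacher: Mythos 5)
Your proposal is correct and follows essentially the same route as the paper: identify the contributing conjugacy classes as those of $K_0,\ldots,K_l$, compute the $(K_{i_j})_G$-coordinate via the special Morse function on $\Omega$ (you cite Theorem \ref{thm:formuladeg} where the paper re-derives the same coordinate formula from Lemmas \ref{lem:tildephi0}--\ref{lem:tildephi}), and use admissibility to replace the condition $(K_i)_G=(K_{i_j})_G$ by $(K_i)_H=(K_{i_j})_H$ so that the coordinate equals $m_j$. The identification $m^-(\nabla^2\tilde{\psi}(v_i))=m^-(B(v_i))$ via the specialness of $\tilde{\psi}$ is exactly the link the paper uses as well.
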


\begin{proof}

From Lemma \ref{lem:tildephi0} it follows that $(\nabla\tilde{\phi})^{-1}(0)\cap\Omega= G(v_0)\cup G(v_1)\cup \ldots\cup G(v_l)$. From Lemma \ref{lem:IsGr} we obtain that $(G_{v_i})_G=(K_i)_G$ for $i=0,1,\ldots,l$.
Since $(K_{i_0})_H,(K_{i_1})_H,\ldots$, $(K_{i_s})_H$ are all the possible conjugacy classes of $K_0, K_1, \ldots, K_l$ in $H$, $\nabla_G\text{-}\deg_{(\cK)}(\nabla\tilde{\phi},\Omega)=0$ for $(\cK)_G\notin \{(K_{i_0})_G,(K_{i_1})_G,\ldots,(K_{i_s})_G\}$.  Therefore, since $\tilde{\phi}$ is a special $G$-invariant $\Omega$-Morse function associated with $\phi$,
\[
\nabla_G\text{-}\deg\left(\nabla\phi,\Omega\right)= \sum\limits_{j=0}^s\nabla_G\text{-}\deg_{(K_{i_j})}\left(\nabla\tilde{\phi},\Omega\right)\cdot \chi_G\left(G/K_{i_j}^+\right)\in U(G),
\]
where
\[
\nabla_G\text{-}\deg_{(K_{i_j})}\left(\nabla\tilde{\phi},\Omega\right) =\sum\limits_{(G_{v_i})_G=(K_{i_j})_G} (-1)^{m^-\left(\nabla^2\tilde{\phi}(v_i)\right)} \in \bZ.
\]
Moreover, Lemma \ref{lem:tildephi} implies that $m^-(\nabla^2\tilde{\phi}(v_i))=m^-(\nabla^2\tilde{\psi}(v_i))$.
Hence
\[
\nabla_G\text{-}\deg_{(K_{i_j})}\left(\nabla\tilde{\phi},\Omega\right) =\sum\limits_{(K_{i})_G=(K_{i_j})_G} (-1)^{m^-\left(\nabla^2\tilde{\psi}(v_i)\right)}.
\]
To finish the proof note that, by the admissibility of the pair $(G,H)$, we have $(K_{i})_H=(K_{i_j})_H$ if and only if $(K_{i})_G=(K_{i_j})_G$.
\end{proof}

The above theorem allows  to reduce comparing the degrees of critical orbits to comparing the degrees of critical points from the spaces normal to these orbits. More precisely, we have the following:
\begin{Corollary}\label{cor:differentfinitedegrees}
Consider $G$-invariant functions $\phi_i \in C^2(\bV, \bR)$ with non-degenerate critical orbits $G(v_i) \subset (\nabla \phi_i)^{-1}(0)$ for $i=1,2$. Suppose that $G_{v_1}=G_{v_2}= H$ and
put $\bW_i=(T_{v_i} G(v_i))^{\bot}$. Fix $\Omega_i=G\cdot B_{\epsilon}(v_i,\bW_i)$ such that $(\nabla\phi_i)^{-1}(0)\cap \Omega_i= G(v_i)$ and $\epsilon$ satisfies Theorem \ref{thm:slice}. Define $\psi_i\colon B_{\epsilon}(v_i,\bW_i)\to\bR$ by $\psi_i=\phi_{i|B_{\epsilon}(v_i,\bW_i)}$.
If $(G,H)$ is an admissible pair and
\[
\nabla_H\text{-}\deg(\nabla\psi_1,B_{\epsilon}(v_1,\bW_1))\neq \nabla_H\text{-}\deg(\nabla\psi_2,B_{\epsilon}(v_2,\bW_2)),
\]
then
\[
\nabla_G\text{-}\deg(\nabla\phi_1, \Omega_1)\neq \nabla_G\text{-}\deg(\nabla\phi_2, \Omega_2).
\]
\end{Corollary}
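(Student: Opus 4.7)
The plan is to prove the contrapositive: assuming that the two $G$-degrees agree, I would deduce that the two $H$-degrees must also agree. The key tool is the Theorem immediately preceding the Corollary, which, for an admissible pair $(G,H)$, expresses $\nabla_G\text{-}\deg(\nabla\phi_i,\Omega_i)$ in $U(G)$ by exactly the same integer coefficients that appear in the decomposition of $\nabla_H\text{-}\deg(\nabla\psi_i,B_{\epsilon}(v_i,\bW_i))$ in $U(H)$. Since $U(G)$ and $U(H)$ are free abelian groups on the classes $\chi_G(G/K^+)$ and $\chi_H(H/K^+)$ respectively, comparing degrees reduces to comparing coordinates in these bases.

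Concretely, I would apply the preceding Theorem separately to $(\phi_1,\psi_1)$ and $(\phi_2,\psi_2)$. This produces, for each $i\in\{1,2\}$, a finite family of subgroups $K_{i_0}^{(i)},\dots,K_{i_{s_i}}^{(i)}\subset H$ (the isotropy groups appearing in the Morse resolution of $\psi_i$ near $v_i$, in distinct $H$-conjugacy classes) together with integers $m_j^{(i)}$ such that
\begin{equation*}
\nabla_G\text{-}\deg(\nabla\phi_i,\Omega_i)=\sum_{j} m_j^{(i)}\,\chi_G\bigl(G/K_{i_j}^{(i)+}\bigr),\qquad
\nabla_H\text{-}\deg(\nabla\psi_i,B_{\epsilon}(v_i,\bW_i))=\sum_{j} m_j^{(i)}\,\chi_H\bigl(H/K_{i_j}^{(i)+}\bigr).
\end{equation*}
The central observation is that the integer coefficients on the two sides are the same; only the basis elements of $U(G)$ versus $U(H)$ change.

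Now, suppose for contradiction that $\nabla_G\text{-}\deg(\nabla\phi_1,\Omega_1)=\nabla_G\text{-}\deg(\nabla\phi_2,\Omega_2)$. Because $\{\chi_G(G/K^+)\}_{(K)_G\in\sub[G]}$ is a free basis of $U(G)$, for every $(K)_G\in\sub[G]$ the coordinates of the two $G$-degrees on $\chi_G(G/K^+)$ must agree. Applying this to each class $(K_{i_j}^{(1)})_G$ forces the coefficient $m_j^{(1)}$ to equal the coefficient occurring in the expansion of $\nabla_G\text{-}\deg(\nabla\phi_2,\Omega_2)$ at this same $G$-class, and symmetrically for $\phi_2$.

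The last step is the transfer back to $U(H)$, and this is where admissibility is essential. All subgroups $K_{i_j}^{(1)},K_{i_{j'}}^{(2)}$ are by construction subgroups of $H$, so admissibility of $(G,H)$ gives the equivalence $(K_{i_j}^{(1)})_G=(K_{i_{j'}}^{(2)})_G \Longleftrightarrow (K_{i_j}^{(1)})_H=(K_{i_{j'}}^{(2)})_H$. Hence the matching of coordinates at each $(K)_G$-slot in $U(G)$ transfers faithfully to a matching of coordinates at each $(K)_H$-slot in $U(H)$, and freeness of $U(H)$ on $\{\chi_H(H/K^+)\}$ yields $\nabla_H\text{-}\deg(\nabla\psi_1,B_{\epsilon}(v_1,\bW_1))=\nabla_H\text{-}\deg(\nabla\psi_2,B_{\epsilon}(v_2,\bW_2))$, contradicting the hypothesis. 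The only real subtlety is this last transfer: without admissibility, distinct $H$-conjugacy classes could collapse to a single $G$-conjugacy class, summing their coefficients and destroying the injectivity needed for the contrapositive — admissibility is precisely what rules this out.
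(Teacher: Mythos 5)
Your proposal is correct and follows exactly the route the paper intends: the Corollary is stated as an immediate consequence of the preceding Theorem, and your contrapositive argument---expanding both $G$-degrees via that Theorem with the same integer coefficients as the $H$-degrees, then using freeness of $U(G)$ and $U(H)$ together with admissibility to transfer the coordinatewise equality from the $(K)_G$-slots back to the $(K)_H$-slots---is precisely the omitted verification. Your closing remark correctly identifies why admissibility is needed (to prevent distinct $H$-classes from collapsing to one $G$-class).
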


Now are going to prove an analogue of Corollary \ref{cor:differentfinitedegrees} in the case of the degree for $G$-invariant strongly indefinite functionals. Fix an infinite dimensional Hilbert space $\bH$, which is an orthogonal $G$-representation.
Suppose that there exists a $G$-equivariant approximation scheme $\{\pi_n \colon \bH \to \bH\colon n \in \bN\cup\{0\}\}$ with $\bH^n=\pi_n(\bH)$.

Let $\Phi\in C^2(\bH,\bR)$ be a $G$-invariant functional and $G(v_0)\subset(\nabla\Phi)^{-1}(0)$ a non-degenerate critical orbit. Fix a $G$-invariant open set $\Omega\subset\bH$ such that $(\nabla\Phi)^{-1}(0)\cap \Omega= G(v_0)$. Put $\bW=(T_{v_0} G(v_0))^{\bot}$,  $H=G_{v_0}$ and $\Psi=\Phi_{|\bW}$. Note that the sequence $\{\tilde{\pi}_n\colon \bW \to \bW \colon n \in \bN \cup \{0\}\}$ of $H$-equivariant orthogonal projections, satisfying $\tilde{\pi}_n(\bW)=\bH^n \cap \bW$, is an $H$-equivariant approximation scheme on $\bW.$ Therefore the degree $\nabla_H\text{-}\deg(\nabla\Psi, \Omega \cap \bW)$ is well-defined.

Consider $G$-invariant functionals $\Phi_i \in C^2(\bH, \bR)$ with non-degenerate critical orbits $G(v_i) \subset (\nabla \Phi_i)^{-1}(0)$ for $i=1,2$. Let $\Phi_i(u) = \frac{1}{2} \langle Lu, u\rangle - \eta_i(u)$, where $L \colon \bH \to \bH$ satisfies condition (d2)(1) of Subsection \ref{subsec:defdegree} and $\nabla\eta_i\colon \bH \to \bH$ are completely continuous $G$-equivariant operators. Suppose that $G_{v_1}=G_{v_2}= H$ and
put $\bW_i=(T_{v_i} G(v_i))^{\bot}$. Define $\Psi_i\colon \bW_i\to\bR$ by $\Psi_i=\Phi_{i|\bW_i}$ and fix $\Omega_i=G\cdot B_{\epsilon}(v_i,\bW_i)$ such that $(\nabla\Phi_i)^{-1}(0)\cap \Omega_i= G(v_i)$, where $\epsilon$ is sufficiently small. 

\begin{Theorem}\label{cor:difdeg}
If $(G,H)$ is an admissible pair and
\[
\nabla_H\text{-}\deg(\nabla\Psi_1,\Omega_1\cap\bW_1)\neq \nabla_H\text{-}\deg(\nabla\Psi_2,\Omega_2\cap\bW_2),
\]
then
\[
\nabla_G\text{-}\deg(\nabla\Phi_1, \Omega_1)\neq \nabla_G\text{-}\deg(\nabla\Phi_2, \Omega_2).
\]
\end{Theorem}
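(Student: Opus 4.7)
The plan is to push the problem down to a sufficiently large finite-dimensional $G$-subrepresentation $\bH^n$ using the defining formula \eqref{eq:defdeg} for the degree of invariant strongly indefinite functionals, and then to invoke Corollary \ref{cor:differentfinitedegrees} inside $\bH^n$.

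First I would choose $n$ large enough that $v_1, v_2$ and the finite-dimensional tangent spaces $T_{v_i}G(v_i)$ are contained in $\bH^n$, and that both \eqref{eq:defdeg} and its analogue on $\bW_i$ with the inherited approximation scheme $\tilde{\pi}_n$ have stabilised. This yields
\[
\degg(\nabla\Phi_i,\Omega_i)=\degg(L,B(\bH^n\ominus\bH^0))^{-1}\star\degg(L-\pi_n\circ\nabla\eta_i,\Omega_i\cap\bH^n)
\]
together with an analogous formula for $\degh(\nabla\Psi_i,\Omega_i\cap\bW_i)$ in terms of the $H$-degree of a finite-dimensional approximation on $\bW_i^n=\bW_i\cap\bH^n$. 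In the $G$-formula the normalising factor does not depend on $i$ and is invertible in $U(G)$, so $\star$-multiplication by its inverse is injective; hence it is enough to show that the two finite-dimensional $G$-degrees $\degg(L-\pi_n\circ\nabla\eta_i,\Omega_i\cap\bH^n)$ are distinct.

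Next I would use that $G_{v_1}=G_{v_2}=H$ forces $T_{v_1}G(v_1)$ and $T_{v_2}G(v_2)$ (and therefore also $\bW_1$ and $\bW_2$) to be $H$-equivariantly isomorphic as orthogonal representations; once such an identification is chosen compatibly with $\bH^n$, the normalising factor in the $H$-analogue of \eqref{eq:defdeg} also becomes independent of $i$ and invertible in $U(H)$. The hypothesis $\degh(\nabla\Psi_1,\Omega_1\cap\bW_1)\neq\degh(\nabla\Psi_2,\Omega_2\cap\bW_2)$ then translates into the corresponding inequality for the finite-dimensional $H$-degrees of the restrictions on $\Omega_i\cap\bW_i^n$. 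At this stage the hypotheses of Corollary \ref{cor:differentfinitedegrees}, applied to the $G$-invariant $C^2$ functions $\Phi_i|_{\bH^n}$ on the finite-dimensional $G$-representation $\bH^n$ and to their restrictions to $\bW_i\cap\bH^n$, are all satisfied: the pair $(G,H)$ is admissible, the orbits $G(v_i)$ are non-degenerate, and the relevant $H$-degrees are distinct. The corollary then yields $\degg(L-\pi_n\circ\nabla\eta_1,\Omega_1\cap\bH^n)\neq\degg(L-\pi_n\circ\nabla\eta_2,\Omega_2\cap\bH^n)$, which by the first step gives the desired conclusion $\degg(\nabla\Phi_1,\Omega_1)\neq\degg(\nabla\Phi_2,\Omega_2)$.

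The main obstacle is the intermediate identification step: one must verify carefully that the $H$-equivariant isomorphism $\bW_1\to\bW_2$ can be chosen to intertwine the relevant Fredholm restrictions of $L$ and to match the inherited projections $\tilde{\pi}_n$, so that the two normalising $H$-degrees genuinely coincide as elements of $U(H)$ rather than just up to some uncontrolled isomorphism. A subsidiary technical point is guaranteeing, by enlarging $n$ if necessary, that $G(v_i)\subset\bH^n$ and that $G(v_i)$ remains the unique critical orbit of $\Phi_i|_{\bH^n}$ inside $\Omega_i\cap\bH^n$; everything else is routine unpacking of the definition of the strongly indefinite degree.
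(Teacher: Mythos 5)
Your proposal is correct and follows essentially the same route as the paper: reduce to finite dimensions via the defining formula \eqref{eq:defdeg}, use the invertibility of the normalising factor $\nabla_G\text{-}\deg(L, B(\bH^n\ominus\bH^0))$, and invoke Corollary \ref{cor:differentfinitedegrees} on the finite-dimensional approximations, the only difference being that the paper runs the argument by contraposition (assuming the $G$-degrees are equal and deducing equality of the $H$-degrees) while you argue in the forward direction. The identification issue you flag for the two normal slices $\bW_1$, $\bW_2$ is real but is also passed over silently in the paper, which simply writes a single $\bW$ in the $H$-normalising factor.
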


\begin{proof}
For $i=1,2$, from the definition of the degree for strongly indefinite functionals we have
\begin{equation*}
\degg(\nabla \Phi_i, \Omega_i)=\degg(L, B(\bH^n \ominus \bH^0))^{-1} \star \degg (L-\pi_n \circ \nabla \eta_i, \Omega_i \cap \bH^n)
\end{equation*}
for  $n$ sufficiently large.
Suppose that
\[
\nabla_G\text{-}\deg(\nabla\Phi_1, \Omega_1)= \nabla_G\text{-}\deg(\nabla\Phi_2, \Omega_2).
\]
Since $\degg(L, B(\bH^n \ominus \bH^0))^{-1}$ is invertible in $U(G)$,  we obtain
\[
\degg(L-\pi_n \circ \nabla \eta_1, \Omega_1\cap \bH^n)= \degg(L-\pi_n \circ \nabla \eta_2, \Omega_2 \cap \bH^n).
\]
Put $\nabla \psi_i^n=(L-\pi_n \circ \nabla \eta_i)_{|B_{\epsilon}(v_i,\bW_i\cap\bH^n)}$. Without loss of generality we can assume that $\epsilon$ is sufficiently small to satisfy Corollary \ref{cor:differentfinitedegrees}. From this corollary and the above equality  we get
\[
\nabla_H\text{-}\deg(\nabla\psi_1^n,B_{\epsilon}(v_1,\bW_1\cap\bH^n))= \nabla_H\text{-}\deg(\nabla\psi_2^n,B_{\epsilon}(v_2,\bW_2\cap\bH^n)).
\]
Multiplying this equality by $\nabla_G\text{-}\deg(L, B((\bH^n \ominus \bH^0)\cap \bW))^{-1}$, we obtain
\[
\nabla_H\text{-}\deg(\nabla\Psi_1,\Omega_1\cap\bW_1)= \nabla_H\text{-}\deg(\nabla\Psi_2,\Omega_2\cap\bW_2),
\]
which completes the proof.
\end{proof}

\subsection{Global bifurcations from the orbit}\label{subsec:globalbif}
In this subsection we describe the application of the degree to studying the phenomenon of bifurcation from a critical orbit. Let $\bH$ be an infinite dimensional Hilbert space, which is an orthogonal $G$-representation, with a $G$-equivariant approximation scheme $\{\pi_n \colon \bH \to \bH\colon n \in \bN\cup\{0\}\}$. Moreover let $\Phi \in C^2(\bH \times \bR, \bR)$ be a $G$-invariant functional such that $\nabla_u \Phi(u, \lambda)=Lu-\nabla _u \eta(u, \lambda),$ where $L\colon \bH \to \bH$ satisfies condition (d2)(1) of Subsection \ref{subsec:defdegree} and  $\nabla_u \eta \colon \bH \times \bR \to \bH$  is a $G$-equivariant, completely continuous operator.

Assume that  there exists $u_0 \in \bH$ such that $\nabla_u \Phi(u_0, \lambda)=0$ for all $\lambda \in \bR$. Since $\nabla_u \Phi$ is $G$-equivariant, we obtain the critical orbit $G(u_0)$ of $\Phi$ for all $\lambda \in \bR$.  Hence we can consider the family $\cT = G(u_0) \times \bR$ of solutions of the equation $\nabla_u \Phi(u, \lambda)=0.$
We call the elements of $\cT$ the trivial solutions of $\nabla_u\Phi(u, \lambda)=0$. In this subsection we are going to study the existence of nontrivial solutions of this equation, namely the global bifurcation problem.
Since we are interested in the case when critical points are not isolated, we consider $G(u_0) \neq \{u_0\}.$ The case $G(u_0)=\{u_0\}$ has been investigated for example in  \cite{GolRyb2011}.
Put $\cN=\{(u, \lambda) \in \bH \times \bR \colon \nabla_u\Phi(u,\lambda)=0, (u,\lambda)\notin \cT\}.$

\begin{Definition}
A global bifurcation from the orbit $G(u_0) \times \{\lambda_0\} \subset \cT$ of solutions of $\nabla_u \Phi(u, \lambda)=0$ occurs if there is a connected component $\cC(\lambda_0)$ of $cl (\cN)$, containing $G(u_0) \times \{\lambda_0\} $ and such that either $\cC(\lambda_0)\cap (\cT \setminus (G(u_0)\times \{\lambda_0\}))\neq \emptyset$ or $\cC(\lambda_0)$ is unbounded. We call $\lambda_0$ a parameter of a global bifurcation from $\cT$ and denote by $GLOB$ the set of all such parameters.
\end{Definition}

In the following we give the necessary condition for a global bifurcation.

\begin{Fact}\label{fact:podejrzane}
If $\lambda_0 \in GLOB$ then $\dim \ker \nabla^2_u\Phi(u_0, \lambda_0) >\dim(G(u_0) \times \{\lambda_0\}).$
\end{Fact}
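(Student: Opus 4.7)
The plan is to prove the contrapositive: assuming $\dim\ker\nabla_u^2\Phi(u_0,\lambda_0)\le\dim\bigl(G(u_0)\times\{\lambda_0\}\bigr)$, I will produce an open neighbourhood of $G(u_0)\times\{\lambda_0\}$ in $\bH\times\bR$ that contains no zero of $\nabla_u\Phi$ outside $\cT$. Since that neighbourhood is open and disjoint from $\cN$, it is automatically disjoint from $cl(\cN)$; in particular $G(u_0)\times\{\lambda_0\}\notin cl(\cN)$, so neither alternative in the definition of $GLOB$ can be realised and $\lambda_0\notin GLOB$.

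The first observation would be that the $G$-equivariance of $\nabla_u\Phi$ together with $\nabla_u\Phi(u_0,\lambda_0)=0$ forces $T_{u_0}G(u_0)\subseteq\ker\nabla_u^2\Phi(u_0,\lambda_0)$, obtained by differentiating the identity $\nabla_u\Phi(gu_0,\lambda_0)=0$ at $g=e$ along the Lie algebra $\mathfrak{g}$. Since $\dim\bigl(G(u_0)\times\{\lambda_0\}\bigr)=\dim G(u_0)$, the standing hypothesis upgrades this inclusion to equality. The Hessian $\nabla_u^2\Phi(u_0,\lambda_0)=L-\nabla_u^2\eta(u_0,\lambda_0)$ is self-adjoint and Fredholm of index $0$: $L$ is such by assumption (d2)(1), and $\nabla_u^2\eta(u_0,\lambda_0)$ is compact because $\nabla_u\eta$ is $C^1$ and completely continuous. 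Hence its range coincides with $(\ker)^\perp=\bW$, where $\bW:=(T_{u_0}G(u_0))^\perp$ as in Subsection~\ref{subsec:indeksorbity}, and the restriction $A:=\nabla_u^2\Phi(u_0,\lambda_0)|_\bW\colon\bW\to\bW$ is a bounded linear isomorphism.

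The core step is an implicit function argument on the slice. Let $P_\bW\colon\bH\to\bW$ denote the orthogonal projection and define
\[
\Psi\colon\bW\times\bR\to\bW,\qquad \Psi(w,\lambda):=P_\bW\bigl(\nabla_u\Phi(w,\lambda)\bigr).
\]
Since $\Psi(u_0,\lambda)\equiv 0$ and $\partial_w\Psi(u_0,\lambda_0)=A$ is invertible, the Banach-space implicit function theorem yields $\rho,\delta>0$ such that every zero of $\Psi$ in $B_\rho(u_0,\bW)\times(\lambda_0-\delta,\lambda_0+\delta)$ is of the form $(u_0,\lambda)$. Shrinking $\rho$ if necessary below the slice radius $\epsilon$ of Theorem~\ref{thm:slice} and setting $\mathcal{O}:=G\cdot B_\rho(u_0,\bW)$, consider any $(u,\lambda)\in\mathcal{O}\times(\lambda_0-\delta,\lambda_0+\delta)$ with $\nabla_u\Phi(u,\lambda)=0$. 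The slice theorem writes $u=gw$ with $g\in G$ and $w\in B_\rho(u_0,\bW)$; $G$-equivariance gives $\nabla_u\Phi(w,\lambda)=0$, hence $\Psi(w,\lambda)=0$, hence $w=u_0$, and therefore $u\in G(u_0)$, i.e.\ $(u,\lambda)\in\cT$. This is precisely the disjointness from $\cN$ needed to close the contrapositive.

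The main obstacle is verifying that the Banach-space IFT truly applies to $\Psi$, which reduces to the assertion that $A$ is an isomorphism; this is the principal analytic content and relies on the Fredholm-index-$0$ structure of $L$ together with compactness of $\nabla_u^2\eta(u_0,\lambda_0)$, both built into the standing framework. A secondary technical point is that Theorem~\ref{thm:slice} is stated for finite-dimensional representations, so in the infinite-dimensional setting one invokes its standard extension to isometric actions of compact Lie groups on Hilbert spaces; this is routine and does not affect the logical structure of the argument.
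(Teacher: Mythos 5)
Your argument is correct and follows essentially the same route the paper indicates: the paper's proof is by reduction to the equivariant implicit function theorem of Dancer, and your slice-plus-classical-IFT argument is precisely an unwinding of that theorem (restrict to the normal slice $\bW$ where the Hessian is invertible by the Fredholm index-zero structure, apply the ordinary Banach-space IFT, and propagate along the orbit by equivariance and Theorem \ref{thm:slice}). The details you supply — $T_{u_0}G(u_0)\subseteq\ker\nabla^2_u\Phi(u_0,\lambda_0)$ forcing non-degeneracy under the contrapositive hypothesis, compactness of $\nabla^2_u\eta(u_0,\lambda_0)$, and the openness argument excluding $G(u_0)\times\{\lambda_0\}$ from $cl(\cN)$ — are all sound.
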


The proof of this fact is similar to the proof of Lemma 3.1 of \cite{GolKluSte} and uses the equivariant version of the implicit function theorem of Dancer, see \cite{Dancer}.

Denote by $\Lambda$ the set of $\lambda_0 \in \bR$ such that $\dim \ker \nabla^2_u\Phi(u_0, \lambda_0) >\dim(G(u_0) \times \{\lambda_0\}).$ Fix $\lambda_0 \in \Lambda$ and suppose that there exists $\varepsilon>0$ such that $\Lambda\cap[\lambda_0-\varepsilon,\lambda_0+\varepsilon]=\{\lambda_0\}$.

Since $\lambda_0\pm\varepsilon\notin \Lambda$, by the standard argument we conclude that $G(u_0)\subset \bH$ is an isolated critical orbit of $\Phi(\cdot,\lambda_0\pm \varepsilon)$. Therefore we can choose a $G$-invariant open set $\Omega\subset\bH$ such that $(\nabla_u\Phi(\cdot,\lambda_0\pm\varepsilon))^{-1}(0)\cap \Omega= G(u_0)$.
Note that for $\Omega$ and $\nabla_u \Phi(\cdot,\lambda_0\pm \varepsilon)$ defined as above, the assumptions (d1)--(d3) of Subsection \ref{subsec:defdegree} are satisfied and therefore the degree $\nabla_G\text{-}\deg(\nabla_u\Phi(\cdot,\lambda_0\pm \varepsilon), \Omega)$ is well-defined. We define the bifurcation index $\BIF_G(\lambda_0) \in U(G)$ in the following way:
\begin{equation}\label{eq:defind}
\BIF_G(\lambda_0)=\nabla_G\text{-}\deg(\nabla_u\Phi(\cdot,\lambda_0+\varepsilon), \Omega)-\nabla_G\text{-}\deg(\nabla_u\Phi(\cdot,\lambda_0-\varepsilon), \Omega).
\end{equation}

The nontriviality of the bifurcation index implies the existence of a global bifurcation. Moreover it allows to describe the behaviour of the continuum of solutions, containing $G(u_0) \times \{ \lambda_0\}$. More precisely, there holds the following version of the Rabinowitz alternative:

\begin{Theorem}\label{thm:Rab}
Fix $\lambda_0\in\Lambda$. If $\BIF_G(\lambda_0)\neq \Theta\in U(G)$ then $\lambda_0 \in GLOB.$ Moreover,
either $\cC(\lambda_0)$ is unbounded in $\bH\times\bR$ or
\begin{enumerate}
\item $\cC(\lambda_0)$ is bounded in $\bH\times\bR$,
\item $\cC(\lambda_0)\cap \cT=G(u_0)\times\{\lambda_{i_1},\ldots, \lambda_{i_s}\}$,
\item $\BIF_G(\lambda_{i_1})+\ldots+\BIF_G(\lambda_{i_s})=\Theta\in U(G)$.
\end{enumerate}
\end{Theorem}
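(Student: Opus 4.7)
The plan is to mimic the classical Rabinowitz global bifurcation argument in our equivariant strongly indefinite setting, combining the homotopy invariance, excision and additivity of $\degg$ (Remark \ref{rem:propdeg}) with a Whyburn-type topological separation lemma. The proof splits into three steps: producing nontrivial solutions accumulating on $G(u_0)\times\{\lambda_0\}$, establishing the Rabinowitz alternative for $\cC(\lambda_0)$, and deriving the sum-of-indices identity.

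First I would show, by contradiction, that $G(u_0)\times\{\lambda_0\}\in cl(\cN)$. If no nontrivial solutions accumulated on this orbit, there would exist a $G$-invariant bounded open set $\Omega'\subset\bH$ with $G(u_0)\subset\Omega'$ and some $\varepsilon'\in(0,\varepsilon]$ such that $(\nabla_u\Phi(\cdot,\lambda))^{-1}(0)\cap cl(\Omega')=G(u_0)$ for every $\lambda\in[\lambda_0-\varepsilon',\lambda_0+\varepsilon']$. The map $(u,s)\mapsto\nabla_u\Phi(u,\lambda_0-\varepsilon'+2s\varepsilon')$, $s\in[0,1]$, is then an $\Omega'$-admissible homotopy, whence $\degg(\nabla_u\Phi(\cdot,\lambda_0+\varepsilon'),\Omega')=\degg(\nabla_u\Phi(\cdot,\lambda_0-\varepsilon'),\Omega')$; combined with excision this contradicts $\BIF_G(\lambda_0)\neq\Theta$. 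Hence $G(u_0)\times\{\lambda_0\}\in cl(\cN)$ and the component $\cC(\lambda_0)$ is well-defined.

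Next, assume $\cC(\lambda_0)$ is bounded. Its intersection with $\cT$ is compact and, by Fact \ref{fact:podejrzane}, its projection on $\bR$ lies in $\Lambda$; since each parameter in $\Lambda$ is assumed isolated, one gets finitely many $\lambda_{i_1}<\cdots<\lambda_{i_s}$ with $\cC(\lambda_0)\cap\cT=G(u_0)\times\{\lambda_{i_1},\ldots,\lambda_{i_s}\}$. Invoking Whyburn's lemma I separate $\cC(\lambda_0)$ from $cl(\cN)\cup(\cT\setminus\cC(\lambda_0))$ by a bounded open $\cV\subset\bH\times\bR$ whose boundary misses $cl(\cN)$ and which meets $\cT$ only in a disjoint union of small neighbourhoods of the orbits $G(u_0)\times\{\lambda_{i_j}\}$; replacing $\cV$ by its $G$-saturation $G\cdot\cV$ (using the compactness of $G$), I may take $\cV$ to be $G$-invariant. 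For every $\lambda\in\bR$ the slice $\cV_\lambda=\{u\in\bH:(u,\lambda)\in\cV\}$ is then a bounded $G$-invariant open set with $\partial\cV_\lambda\cap(\nabla_u\Phi(\cdot,\lambda))^{-1}(0)=\emptyset$, so $\degg(\nabla_u\Phi(\cdot,\lambda),\cV_\lambda)\in U(G)$ is well-defined. Homotopy invariance forces it to be constant on every connected component of $\bR\setminus\{\lambda_{i_1},\ldots,\lambda_{i_s}\}$; it equals $\Theta$ for $|\lambda|$ large (where $\cV_\lambda=\emptyset$), and excision identifies the jump across each $\lambda_{i_j}$ with $\BIF_G(\lambda_{i_j})$. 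Telescoping from $-\infty$ to $+\infty$ yields $\BIF_G(\lambda_{i_1})+\cdots+\BIF_G(\lambda_{i_s})=\Theta$, which is item~(3). In the degenerate case $s=1$, $\lambda_{i_1}=\lambda_0$, this would force $\BIF_G(\lambda_0)=\Theta$, contradicting the hypothesis; hence either $\cC(\lambda_0)$ is unbounded or it meets $\cT$ at some orbit other than $G(u_0)\times\{\lambda_0\}$, and in both cases $\lambda_0\in GLOB$.

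The main obstacle is the construction of the $G$-invariant Whyburn neighbourhood $\cV$ with slices on whose boundaries $\nabla_u\Phi(\cdot,\lambda)$ does not vanish. The $G$-invariance follows from the $G$-saturation trick (since $\cC(\lambda_0)$ is $G$-invariant and $G$ is compact), but verifying the slicewise admissibility in the strongly indefinite case requires passing through the finite-dimensional Galerkin projections $\pi_n$ and checking that the boundaries of the truncated slices stay away from the zeros of the approximating fields uniformly in $\lambda$. Once this technicality is settled, the algebra in the Euler ring $U(G)$ is formally identical to the telescoping computation in the classical scalar Rabinowitz argument.
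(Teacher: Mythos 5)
Your proposal is correct and follows essentially the same route the paper takes: the paper does not write out this proof but defers to the classical Rabinowitz--Ize argument based on homotopy invariance, excision and compactness of the zero set, which is exactly what you reconstruct (local degree jump forcing bifurcation, Whyburn separation of the bounded component by an invariant neighbourhood, and telescoping of the slice degrees to get the sum identity). The only point worth noting is that the indices $\BIF_G(\lambda_{i_j})$ in item (3) are defined only when each $\lambda_{i_j}$ is isolated in $\Lambda$, an assumption the paper leaves implicit and you correctly make explicit.
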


The proof of this theorem is standard in the degree theory, see for instance \cite{Ize}, \cite{Nirenberg}, \cite{Rabinowitz} \cite{Rabinowitz1}. In the case of the Leray-Schauder degree it can be found for example in \cite{Brown}. This proof relies on the properties of the degree, namely the homotopy invariance property and the excision property. Moreover, it uses the fact, that the set $(\nabla \Phi)^{-1}(0)\cap \Omega$ is compact. Since in the case of the degree for invariant strongly indefinite functionals such properties remain valid (see \cite{GolRyb2011}), the above version of the Rabinowitz theorem is true also in this situation.

The bifurcation index is defined in terms of the degree in a neighbourhood of the orbit. Therefore, to consider  this index we are going to use results of Subsection \ref{subsec:indeksorbity}. As in this subsection, we put $\bW=(T_{u_0}G(u_0))^{\perp}$, $H=G_{u_0}$ and $\Psi=\Phi_{|\bW}$. Define
$$\BIF_H(\lambda_0)=\degh(\nabla_u \Psi(\cdot, \lambda_0+\varepsilon),\Omega\cap\bW) - \degh(\nabla_u \Psi(\cdot, \lambda_0-\varepsilon),\Omega\cap\bW).$$

It occurs that in some cases instead of verifying the nontriviality of the index $\BIF_G(\lambda_0)\in U(G)$, we can verify the nontriviality of $\BIF_H(\lambda_0)\in U(H).$ More precisely, from Theorems \ref{cor:difdeg} and \ref{thm:Rab}, we have the following:

\begin{Theorem}\label{thm:RabH}
Fix $\lambda_0\in\Lambda$. If the pair $(G,H)$ is admissible and $\BIF_H(\lambda_0)\neq \Theta\in U(H)$ then $\BIF_G(\lambda_0)\neq \Theta\in U(G)$ and consequently the assertion of Theorem \ref{thm:Rab} is satisfied.
\end{Theorem}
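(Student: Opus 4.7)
The plan is to deduce this from Theorem \ref{cor:difdeg} via its contrapositive and then invoke Theorem \ref{thm:Rab}. The strategy is clean: assume $\BIF_G(\lambda_0)=\Theta$, interpret this as equality of two $G$-degrees on a neighbourhood of $G(u_0)$, and let admissibility of $(G,H)$ cascade this equality down to the corresponding $H$-degrees on the slice $\bW$, contradicting $\BIF_H(\lambda_0)\neq\Theta$.

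First I would check that the setup of Theorem \ref{cor:difdeg} is available at $\lambda_0\pm\varepsilon$. Since $\lambda_0\pm\varepsilon\notin\Lambda$, we have $\dim\ker\nabla^2_u\Phi(u_0,\lambda_0\pm\varepsilon)\leq\dim G(u_0)$; the reverse inequality is automatic because the critical orbit contributes $T_{u_0}G(u_0)\subset\ker\nabla^2_u\Phi(u_0,\lambda_0\pm\varepsilon)$. Hence $G(u_0)$ is a non-degenerate critical orbit of both $\Phi(\cdot,\lambda_0+\varepsilon)$ and $\Phi(\cdot,\lambda_0-\varepsilon)$, with common isotropy $H$ and common normal space $\bW$. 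After possibly shrinking, I would fix a single $G$-invariant neighbourhood $\Omega=G\cdot B_\epsilon(u_0,\bW)$ satisfying Theorem \ref{thm:slice} and isolating $G(u_0)$ as the unique critical orbit of $\Phi(\cdot,\lambda_0\pm\varepsilon)$ in $\Omega$; the block decomposition from Lemma \ref{lem:hessian} additionally guarantees that $u_0$ is a non-degenerate critical point of the slice restrictions $\Psi(\cdot,\lambda_0\pm\varepsilon)$, so $\BIF_H(\lambda_0)$ is well-defined.

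Next I would argue by contradiction. Suppose $\BIF_G(\lambda_0)=\Theta\in U(G)$, i.e.
\[
\nabla_G\text{-}\deg(\nabla_u\Phi(\cdot,\lambda_0+\varepsilon),\Omega)=\nabla_G\text{-}\deg(\nabla_u\Phi(\cdot,\lambda_0-\varepsilon),\Omega).
\]
The contrapositive of Theorem \ref{cor:difdeg}, applied with $v_1=v_2=u_0$, $\bW_1=\bW_2=\bW$, $\Omega_1=\Omega_2=\Omega$, and with admissibility of $(G,H)$, then forces
\[
\nabla_H\text{-}\deg(\nabla_u\Psi(\cdot,\lambda_0+\varepsilon),\Omega\cap\bW)=\nabla_H\text{-}\deg(\nabla_u\Psi(\cdot,\lambda_0-\varepsilon),\Omega\cap\bW),
\]
which is precisely $\BIF_H(\lambda_0)=\Theta$, contradicting the hypothesis. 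Therefore $\BIF_G(\lambda_0)\neq\Theta\in U(G)$, and the Rabinowitz alternative of Theorem \ref{thm:Rab} applies to yield the stated global bifurcation. The only genuine work is the non-degeneracy verification and the consistent choice of $\Omega$ in the first step; the rest is a direct invocation of results already available in the paper.
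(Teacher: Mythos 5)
Your proposal is correct and follows exactly the route the paper intends: Theorem \ref{thm:RabH} is stated there as a direct consequence of Theorems \ref{cor:difdeg} and \ref{thm:Rab}, and your argument (the contrapositive reading of Theorem \ref{cor:difdeg} applied to $\Phi(\cdot,\lambda_0\pm\varepsilon)$ on a common slice neighbourhood, followed by the Rabinowitz alternative) is precisely that deduction. Your explicit verification that $\lambda_0\pm\varepsilon\notin\Lambda$ forces non-degeneracy of the orbit, so that the hypotheses of Theorem \ref{cor:difdeg} hold, is a detail the paper leaves implicit but is entirely in the spirit of its ``standard argument'' remark.
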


\begin{Remark}\label{rem:sum} Using the same reasoning as in the proof of Theorem \ref{cor:difdeg}, it is easy to prove that if the pair $(G,H)$ is admissible and
\[
\BIF_{G}(\lambda_{i_1})+\ldots+\BIF_{G}(\lambda_{i_s})=\Theta\in U(G)
\]
then
\[
\BIF_{H}(\lambda_{i_1})+\ldots+\BIF_{H}(\lambda_{i_s})=\Theta\in U(H).
\]
\end{Remark}

\begin{Remark}
Results concerning the global bifurcation can be obtained also via the Conley index theory. More precisely, using the relationship between the degree and the Conley index (see \cite{Geba}, \cite{GolRyb2013}), one can formulate a global bifurcation theorem in terms of the Conley index, see \cite{GolKluSte}.

Note that the Conley index of a critical orbit can be computed with the use of the index of the critical point of the restricted map. Such a formula has been recently proved in \cite{PRS}.

Although the existence of a global bifurcation of solutions can be in some cases proved by the Conley index theory, using the degree theory approach one can study also the structure of continua of solutions. In particular one can formulate a version of the Rabinowitz alternative, see Theorem \ref{thm:Rab}.
\end{Remark}

\section{Elliptic systems}

Consider the system
\[
\left\{
\begin{array}{rclcl}   A \triangle u & =& \nabla_u F(u,\lambda )   & \text{ in   } & \cU \\
                   \frac{\partial u}{\partial \nu} & =  & 0 & \text{ on    } & \partial\cU,
\end{array}\right.
\]
where $A=\diag\{a_1, a_2,\ldots, a_p\}$, i.e. the system
\begin{equation}\label{eq:system}
\left\{\begin{array}{lclcl}
a_1\Delta u_1&=&\nabla_{u_1}F(u,\lambda) &  \text{in}& \cU \\
a_2\Delta u_2&=&\nabla_{u_2}F(u,\lambda) & \text{in}& \cU\\
&\vdots&&\ \ \ \\
a_p\Delta u_p&=&\nabla_{u_p}F(u,\lambda) & \text{in}& \cU \\
\frac{\partial u_1}{\partial \nu}=\ldots&=&\frac{\partial u_p}{\partial \nu}=0 &\text{on}& \partial\cU,
\end{array}
\right.
\end{equation}
where $\cU\subset\bR^N$ is an open, bounded and $SO(N)$-invariant subset. We assume that
\begin{enumerate}
\item[(a1)] $F\in C^2(\bR^p\times\bR,\bR)$,
\item[(a2)] $u_0$ is a critical point of $F$ for all $\lambda \in \bR$ and there exists a symmetric matrix $B$ such that $\nabla_u^2 F(u_0,\lambda)=\lambda B$.
\item[(a3)] there exist $C>0$ and $1\leq s < (N+2)(N-2)^{-1}$ such that $|\nabla^2_u F(u,\lambda)|\leq C(1+|u|^{s-1})$ (if $N=2$, we assume that $s\in[1,+\infty))$,
\item[(a4)] there exists $0 \leq  p_1\leq p$ such that $a_1=a_2=\ldots=a_{p_1}=-1, a_{p_1+1}=\ldots=a_p=1$, i.e. $A=\diag\{\underbrace{-1,\ldots,-1}_{p_1},\underbrace{1,\ldots,1}_{p_2}\}$, where $p_2=p-p_1,$
\item[(a5)] $B=\left[\begin{array}{cc}
B_1&\bO\\
\bO& B_2
\end{array}\right]$, where $B_1$ and $B_2$ are real symmetric matrices of dimensions $p_1\times p_1$ and $p_2\times p_2$ (respectively) and $\bO$ is the zero matrix of the appropriate dimension,
\item[(a6)]  $\bR^{p_1}$ and $\bR^{p_2}$ are orthogonal $\Gamma$-rep\-re\-sen\-ta\-tions and $F$ is $\Gamma$-invariant, i.e. $F(\gamma u, \lambda) =F(u, \lambda)$ for every $\gamma \in \Gamma, u\in\bR^p, \lambda \in \bR$, for  $\Gamma$ being a compact Lie group.
\end{enumerate}
From (a2) and (a6) it follows that $\nabla_u F(\gamma u_0,\lambda)=0$ for every $\gamma\in\Gamma$, $\lambda\in\bR$, i.e. $\Gamma(u_0)\subset(\nabla_u F(\cdot,\lambda))^{-1}(0)$ for every $\lambda\in\bR$. We additionally assume:
\begin{enumerate}
\item[(a7)] the orbit $\Gamma(u_0)$ is non-degenerate for every $\lambda\in\bR$, i.e.  $\dim \ker \nabla^2 F (u_0,\lambda) = \dim \Gamma (u_0)$.
\end{enumerate}

Recall that if $p_1\cdot p_2>0$, the system is called non-cooperative. We will be mainly interested in such systems.

\subsection{Functional associated with the system.}

Let $H^1(\cU)$ denote the standard Sobolev space with the inner product
\begin{equation*}
\langle \eta,\xi\rangle_{H^1(\cU)}=\int\limits_{\cU}(\nabla \eta(x), \nabla \xi(x)) +\eta(x) \cdot \xi(x)\, dx.
\end{equation*}
Consider the space $\bH=\bigoplus_{i=1}^p H^1(\cU)$ with the inner product given by
\begin{equation}\label{eq:iloczyn}
\langle u, v\rangle_{\bH} = \sum_{i=1}^p \langle u_i,v_i\rangle_{H^1(\cU)}.
\end{equation}
It is known that weak solutions of the system \eqref{eq:system} are in one-to-one correspondence with critical points (with respect to $u$) of the functional $\Phi \colon \bH \times \bR \to \bR$ given by
\begin{equation}\label{eq:Phi}
\Phi(u,\lambda)=\frac{1}{2}\int_{\cU}\sum^p_{i=1}(-a_i|\nabla u_i(x)|^2)\,dx-\int_{\cU}F(u(x),\lambda)\, dx.
\end{equation}

\begin{Remark}
From the assumption (a2) it follows that there exists $g\in C^2(\bR^p\times\bR,\bR)$ such that  $F(u,\lambda) = \frac{\lambda}{2} (Bu, u)-\lambda (B u_0,u) + g(u-u_0,\lambda)$ and for every $\lambda\in\bR$ there hold $\nabla_u g(0,\lambda)=0$ and $\nabla^2_u g(0,\lambda)=0$.
\end{Remark}

Denote by $\tilde{u}_0\in\bH$ the constant function $\tilde{u}_0\equiv u_0$.

\begin{Lemma}\label{lem:postacPhi}
Under the assumptions (a1)--(a4):
\begin{equation*}
\left.\begin{array}{ll}
\nabla_u\Phi(u,\lambda)&=L(u-\tilde{u}_0)+L_{\lambda B} (u-\tilde{u}_0)- \nabla_u\eta_0(u-\tilde{u}_0,\lambda),
\end{array}\right.
\end{equation*}
where
\begin{enumerate}
\item $L\colon\bH\to\bH$ is given by $L(u_1,\ldots,u_p)=(-a_1u_1,\ldots,-a_pu_p)$,
\item $L_{\lambda B}\colon\bH\to\bH$ is given by
$$\langle L_{\lambda B}u,v\rangle_{\bH}=\int_{\cU} (Au(x)-\lambda Bu(x),v(x))\, dx$$
for all $v\in \bH$,
\item $\eta_0\colon \bH\times\bR\to \bR$ is defined by $\displaystyle{\eta_0(u,\lambda) =\int_{\cU} g(u(x),\lambda)\,dx.}$
\end{enumerate}
\end{Lemma}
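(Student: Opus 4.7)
The plan is to verify the identity by computing the Fréchet derivative $d\Phi(u,\lambda)$ in an arbitrary direction $v\in\bH$ and then recovering $\nabla_u\Phi$ via the Riesz isomorphism of $\bH$ equipped with the inner product \eqref{eq:iloczyn}; each of the three summands on the right-hand side will be matched by comparing inner products with $v$.

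First, I would differentiate $\Phi$ termwise to obtain
\[
d\Phi(u,\lambda)[v] = -\sum_{i=1}^p a_i\int_{\cU}\nabla u_i\cdot\nabla v_i\,dx - \int_{\cU}\bigl(\nabla_u F(u(x),\lambda),v(x)\bigr)\,dx.
\]
By assumption (a2) and the preceding remark, $\nabla_u F(u,\lambda) = \lambda B(u-u_0) + \nabla_u g(u-u_0,\lambda)$. Since $\tilde{u}_0$ is constant, $\nabla u_i = \nabla(u-\tilde{u}_0)_i$, so writing $\tilde{u}=u-\tilde{u}_0$ yields
\[
d\Phi(u,\lambda)[v] = -\sum_i a_i\int_{\cU}\nabla\tilde u_i\cdot\nabla v_i\,dx - \int_{\cU}(\lambda B\tilde u,v)\,dx - \int_{\cU}(\nabla_u g(\tilde u,\lambda),v)\,dx.
\]

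Second, I would identify each piece with one of the declared operators. For $L(u_1,\ldots,u_p)=(-a_1u_1,\ldots,-a_pu_p)$ one has, directly from \eqref{eq:iloczyn},
\[
\langle L\tilde u, v\rangle_{\bH} = -\sum_i a_i\int_{\cU}\nabla\tilde u_i\cdot\nabla v_i\,dx - \sum_i a_i\int_{\cU}\tilde u_i v_i\,dx,
\]
which reproduces the desired gradient term together with a spurious $L^2$-correction. Since $A=\diag\{a_1,\ldots,a_p\}$, the definition of $L_{\lambda B}$ gives $\langle L_{\lambda B}\tilde u, v\rangle_{\bH} = \sum_i a_i\int_{\cU}\tilde u_i v_i\,dx - \int_{\cU}(\lambda B\tilde u,v)\,dx$, which cancels this correction and produces the linear $\lambda B$-term. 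The nonlinear remainder $-\int_{\cU}(\nabla_u g(\tilde u,\lambda),v)\,dx$ is then, by definition of the Hilbert gradient, $-\langle\nabla_u\eta_0(\tilde u,\lambda),v\rangle_{\bH}$. Summing and invoking Riesz representation gives the claim for every $v\in\bH$.

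The only non-routine step, and the main obstacle, is verifying that $L$, $L_{\lambda B}$, and $\nabla_u\eta_0$ are genuinely well-defined on $\bH$. Boundedness of $L$ is immediate, and for $L_{\lambda B}$ it follows from the continuous embedding $H^1(\cU)\hookrightarrow L^2(\cU)$. For $\nabla_u\eta_0$ one must check that $v\mapsto\int_{\cU}(\nabla_u g(\tilde u,\lambda),v)\,dx$ is a continuous linear functional on $\bH$: integrating the growth estimate (a3) gives $|\nabla_u g(w,\lambda)|\le C(1+|w|^s)$, and combined with the subcritical condition $s<(N+2)(N-2)^{-1}$ and the Sobolev embedding $H^1(\cU)\hookrightarrow L^{2N/(N-2)}(\cU)$, Hölder's inequality yields the required continuity, so $\nabla_u\eta_0(\tilde u,\lambda)\in\bH$ is well-defined by Riesz. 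Together with the pointwise identifications above, this completes the proof.
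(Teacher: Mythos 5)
Your proof is correct and follows essentially the same route as the paper's: both rest on the decomposition of $F$ from the remark preceding the lemma and on the observation that the $L^2$-part of $\langle L(u-\tilde{u}_0),v\rangle_{\bH}$ is exactly compensated by the $A$-term in the definition of $L_{\lambda B}$, the only difference being that the paper rewrites $\Phi$ itself before differentiating while you differentiate first and then match terms against $v$. Your closing verification that $\nabla_u\eta_0$ is well defined via (a3) and the Sobolev embedding is material the paper defers to the lemma that follows, and it is consistent with it.
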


\begin{proof}
Note that
\begin{eqnarray*}
&&\Phi(u,\lambda)=\frac{1}{2}\int_{\cU}\sum^p_{i=1}\left(-a_i|\nabla u_i(x)|^2\right)\,dx-\int_{\cU}F(u(x),\lambda)\, dx
= \frac{1}{2}\sum^p_{i=1}\left(-a_i\|u_i\|_{H^1(\cU)}^2\right)+\\&& \int_{\cU}\left( \frac12\sum^p_{i=1}a_i|u_i(x)|^2 -\frac{\lambda}{2} (Bu(x), u(x))+\lambda (B u_0,u(x))\right)\,dx
-\int_{\cU} g(u(x)-u_0,\lambda)\, dx =\\
&&\frac12 \langle Lu,u\rangle_{\bH} -\langle L\tilde{u}_0,u\rangle_{\bH}+\\&&\int_{\cU}\left(\frac12(Au(x),u(x)) -(Au_0,u(x)) -\frac{\lambda}{2} (Bu(x), u(x))+\lambda (B u_0,u(x))\right) \,dx - \eta_0(u-\tilde{u}_0,\lambda).
\end{eqnarray*}
Hence, for $v\in \bH$,
\begin{eqnarray*}
\langle\nabla_u \Phi(u,\lambda),v\rangle_{\bH}&=& \langle L(u-\tilde{u}_0),v\rangle_{\bH} +\int_{\cU}(A(u(x)-{u}_0)-\lambda B(u(x)-u_0),v(x)) \, dx-\\ & & \langle\nabla\eta_0(u-\tilde{u}_0,\lambda),v\rangle_{\bH}.
\end{eqnarray*}
This completes the proof.
\end{proof}

Below we collect some properties of $\nabla_u \Phi.$

\begin{Lemma} From the assumptions (a1)--(a4) we obtain:
\begin{enumerate}
\item $L$ is a self-adjoint, bounded Fredholm operator of index 0,
\item $L_{\lambda B}$ is a self-adjoint, bounded, completely continuous operator,
\item $\nabla_u\eta_0\colon\bH\times\bR\to\bH$ is a completely continuous operator such that $\nabla_u\eta_0(0,\lambda)=0,\ \nabla^2_u\eta_0(0,\lambda)=0$ for every $\lambda\in\bR$.
\end{enumerate}
\end{Lemma}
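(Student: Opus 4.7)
The three claims are standard consequences of Sobolev-space theory applied to the explicit splitting obtained in Lemma \ref{lem:postacPhi}, so I would address them in order, keeping each argument short and reusing the compact embedding $H^1(\cU)\hookrightarrow L^q(\cU)$ for $q<2N/(N-2)$ (or any $q<\infty$ when $N=2$).

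For (1), since each $a_i\in\{-1,+1\}$ by (a4), the operator $L$ acts on the decomposition $\bH=\bigoplus_{i=1}^p H^1(\cU)$ as block-diagonal multiplication by $\pm\mathrm{Id}$. Hence $\|L\|=1$, $L^2=\mathrm{Id}$, and self-adjointness is immediate from the definition \eqref{eq:iloczyn} of the inner product on $\bH$. Because $L$ is an isomorphism its kernel and cokernel are trivial, so it is Fredholm of index $0$. For (2), the bilinear form $(u,v)\mapsto \int_\cU((A-\lambda B)u(x),v(x))\,dx$ is continuous on $\bH\times\bH$ by Cauchy--Schwarz and the continuous embedding $H^1(\cU)\hookrightarrow L^2(\cU)$, so by Riesz representation it defines a bounded operator $L_{\lambda B}$; self-adjointness follows from the symmetry of $A$ and $B$. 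For complete continuity, if $u_n\rightharpoonup u$ in $\bH$, the Rellich--Kondrachov theorem gives $u_n\to u$ strongly in $L^2(\cU)^p$, and
\[
\|L_{\lambda B}(u_n-u)\|_{\bH}=\sup_{\|v\|_{\bH}=1}\left|\int_\cU ((A-\lambda B)(u_n-u),v)\,dx\right|\le C\|u_n-u\|_{L^2}\to 0.
\]

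For (3), the Remark preceding the lemma gives $\nabla_u g(0,\lambda)=0$ and $\nabla^2_u g(0,\lambda)=0$, so once the Fr\'echet derivatives of $\eta_0$ are identified with their integral representations against test elements of $\bH$, the vanishing conditions $\nabla_u\eta_0(0,\lambda)=0$ and $\nabla^2_u\eta_0(0,\lambda)=0$ are immediate. To obtain complete continuity of $\nabla_u\eta_0\colon\bH\times\bR\to\bH$, I would integrate the subcritical bound (a3) twice to conclude $|\nabla_u g(u,\lambda)|\le C(1+|u|^s)$ with $s<(N+2)/(N-2)$; the Nemytskii operator $u\mapsto\nabla_u g(u(\cdot),\lambda)$ is then continuous from $L^{s+1}(\cU)^p$ into $L^{(s+1)/s}(\cU)^p$, and composing with the \emph{compact} embedding $\bH\hookrightarrow L^{s+1}(\cU)^p$ (valid by the strict subcriticality of $s$) and the continuous dual embedding yields a completely continuous map into $\bH$. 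Joint continuity in $\lambda$ follows because the growth bound is uniform on compact $\lambda$-intervals.

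The only genuinely nontrivial ingredient is the Nemytskii/compactness step in (3); parts (1) and (2) are essentially algebraic. The main obstacle to watch is ensuring that the exponent bookkeeping really uses the \emph{strict} inequality $s<(N+2)/(N-2)$ so that the Sobolev embedding used is compact rather than merely continuous, and that the continuity in $\lambda$ survives the Nemytskii composition uniformly on bounded intervals of the parameter.
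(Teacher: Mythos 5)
Your proposal is correct and matches what the paper intends: the paper gives no written proof, stating only that the properties follow from the definitions of $L$ and $L_{\lambda B}$ and that complete continuity follows from ``the standard argument'' in Rabinowitz's CBMS notes, which is precisely the Rellich--Kondrachov/Nemytskii argument you spell out. Your exponent bookkeeping (integrating (a3) to get $|\nabla_u g(u,\lambda)|\le C(1+|u|^{s})$ and using the compact embedding $H^1(\cU)\hookrightarrow L^{s+1}(\cU)$ from strict subcriticality) is exactly the standard route being cited.
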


The above properties follow from the definition of the operators $L$ and $L_{\lambda B}$. To prove that the operators are  completely continuous one can use the standard argument, see \cite{Rab}.

Put $\cH_{p_1}=\bigoplus_{i=1}^{p_1} H^1(\cU)$, $\cH_{p_2}=\bigoplus_{i=1}^{p_2} H^1(\cU)$ and note that for $u=(\mathbf{u}_1,\mathbf{u}_2)\in\cH_{p_1}\oplus\cH_{p_2}$ we have $L_{\lambda B}u=(L_{\lambda B_1}\mathbf{u}_1,L_{\lambda B_2}\mathbf{u}_2)$, where $L_{\lambda B_i}\colon\cH_{p_i}\to \cH_{p_i}$, $i=1,2$, and
\[\langle L_{\lambda B_1}\mathbf{u}_1,v\rangle_{\cH_{p_1}}=\int_{\cU} (-\mathbf{u}_1(x)-\lambda B_1\mathbf{u}_1(x),v(x))\, dx \]
for all $v\in \cH_{p_1}$ and
\[\langle L_{\lambda B_2}\mathbf{u}_2,v\rangle_{\cH_{p_2}}=\int_{\cU} (\mathbf{u}_2(x)-\lambda B_2\mathbf{u}_2(x),v(x))\, dx\]
for all $v\in \cH_{p_2}$.

Let us denote  by $\sigma(-\Delta; \cU) = \{ 0=
\alpha_1 < \alpha_2 < \ldots < \alpha_k < \ldots\}$ the set of distinct
eigenvalues of the Laplace operator (with Neumann boundary conditions) on $\cU$. Write $\bV_{-\Delta}(\alpha_k)$ for the eigenspace of $-\Delta$ corresponding to  $\alpha_k \in
\sigma(-\Delta; \cU)$. By the spectral properties of self-adjoint, completely continuous operators, it follows that $H^1(\cU) = cl ( \bigoplus_{k=1}^{\infty} \bV_{-\Delta} (\alpha_k)).$
Let us denote by $\h_k$ the space $\bigoplus_{i=1}^p \bV_{-\Delta}(\alpha_k).$
In particular, for every $u\in \bH$ there exists a unique sequence $\{u^k\}$ such that $u^k\in\h_k$ and $u=\sum_{k=1}^{\infty}u^k$.

Let $ b_1,\ldots, b_{p_1}$ and $ b_{p_1+1},\ldots, b_{p}$ denote the eigenvalues (not necessarily distinct) of $B_1$ and $B_2$ respectively, with corresponding eigenvectors $f_1,\ldots,f_p$ which form an orthonormal basis of $\bR^p$.

Let $\tau_j\colon\bH\to H^1(\cU)$ be a projection such that $\tau_j(u)(x)=(u(x),f_j)$, $j=1,\ldots, p$.
Clearly, if $u^k \in \bH_k,$ then $\tau_j(u^k) \in \bV_{-\Delta}(\alpha_k)$ for $j=1, \ldots, p.$

In the lemma below we characterise the operator $L_{\lambda B}$.

\begin{Lemma}\label{lem:operatorLlambdaB}
For every $u\in\bH$
\[
L_{\lambda B} u=\sum\limits_{k=1}^{\infty}\sum\limits_{j=1}^p \frac{a_j-\lambda b_j}{1+\alpha_k} \tau_j(u^k) \cdot f_j,
\]
where $u=\sum\limits_{k=1}^{\infty}u^k$.
\end{Lemma}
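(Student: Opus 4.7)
The plan is to test the claimed identity against arbitrary $v \in \bH$ in the inner product $\langle\cdot,\cdot\rangle_{\bH}$, using the fact that $\{f_1,\ldots,f_p\}$ simultaneously diagonalizes $A$ and $B$. Observe that by assumption (a5), $B$ is block-diagonal with blocks $B_1, B_2$; since $A=\diag(-I_{p_1},I_{p_2})$ is already block-diagonal and constant on each block, an orthonormal basis of $\bR^p$ consisting of eigenvectors of $B$ (obtained by diagonalizing $B_1$ and $B_2$ separately) will also consist of eigenvectors of $A$. So $Af_j=a_j f_j$ and $Bf_j=b_j f_j$ for all $j$.

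First I would rewrite pointwise $u(x)=\sum_{j=1}^p \tau_j(u)(x)f_j$ (and similarly for $v$). Orthonormality of $\{f_j\}$ and simultaneous diagonalization give
\[
(Au(x)-\lambda B u(x),v(x)) = \sum_{j=1}^{p}(a_j-\lambda b_j)\,\tau_j(u)(x)\,\tau_j(v)(x),
\]
so that, from the definition of $L_{\lambda B}$,
\[
\langle L_{\lambda B}u,v\rangle_{\bH}=\sum_{j=1}^{p}(a_j-\lambda b_j)\int_{\cU}\tau_j(u)(x)\,\tau_j(v)(x)\,dx.
\]
Next I would exploit the spectral decomposition of $-\Delta$ with Neumann boundary conditions. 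For $\eta,\xi\in \bV_{-\Delta}(\alpha_k)$, integration by parts (the boundary terms vanish by Neumann conditions) gives $\int_{\cU}\nabla\eta\cdot\nabla\xi\,dx=\alpha_k\int_{\cU}\eta\xi\,dx$, hence
\[
\langle \eta,\xi\rangle_{H^1(\cU)}=(1+\alpha_k)\int_{\cU}\eta\xi\,dx,
\]
while for $\eta\in \bV_{-\Delta}(\alpha_k)$, $\xi\in \bV_{-\Delta}(\alpha_m)$ with $k\neq m$ both the $L^2$- and $H^1$-inner products vanish by self-adjointness of $-\Delta$.

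Writing $u=\sum_k u^k$, $v=\sum_k v^k$ with $u^k,v^k\in\bH_k$, note that $\tau_j(u^k),\tau_j(v^k)\in\bV_{-\Delta}(\alpha_k)$. By the orthogonality above,
\[
\int_{\cU}\tau_j(u)\tau_j(v)\,dx=\sum_{k=1}^{\infty}\int_{\cU}\tau_j(u^k)\tau_j(v^k)\,dx =\sum_{k=1}^{\infty}\frac{1}{1+\alpha_k}\langle \tau_j(u^k),\tau_j(v^k)\rangle_{H^1(\cU)}.
\]
Substituting into the earlier expression and using $\langle \tau_j(u^k),\tau_j(v^k)\rangle_{H^1(\cU)}=\langle \tau_j(u^k)f_j,v\rangle_{\bH}$ (which follows from \eqref{eq:iloczyn} together with the same $H^1$-orthogonality across distinct eigenvalues) yields
\[
\langle L_{\lambda B}u,v\rangle_{\bH}=\Bigl\langle \sum_{k=1}^{\infty}\sum_{j=1}^{p}\frac{a_j-\lambda b_j}{1+\alpha_k}\,\tau_j(u^k)\cdot f_j,\;v\Bigr\rangle_{\bH}.
\]
Since this holds for every $v\in\bH$, the claimed formula follows. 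The only delicate point is checking convergence of the series in $\bH$: since $\nabla_u F$ is of subcritical growth by (a3), $L_{\lambda B}$ is a bounded operator on $\bH$, and boundedness of its matrix coefficients $(a_j-\lambda b_j)/(1+\alpha_k)$ together with Parseval in the $H^1$-orthonormal basis of eigenfunction components ensures the partial sums form a Cauchy sequence in $\bH$; this is the main technical check, but it is routine given the spectral setup.
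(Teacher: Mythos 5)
Your argument is correct and is essentially the intended one: the paper gives no proof of this lemma, only the remark that it is analogous to Lemma 3.2 of \cite{GolKlu} and uses (a5), and your two key ingredients --- that (a5) lets the orthonormal eigenbasis $\{f_j\}$ of $B$ be chosen to simultaneously diagonalize $A$ (so $Af_j=a_jf_j$, $Bf_j=b_jf_j$), and that on $\bV_{-\Delta}(\alpha_k)$ one has $\langle\eta,\xi\rangle_{H^1(\cU)}=(1+\alpha_k)\int_{\cU}\eta\xi\,dx$ with orthogonality across distinct eigenvalues --- are exactly what that computation rests on. One small correction: the convergence/boundedness at the end has nothing to do with the subcritical growth condition (a3) on $\nabla_u F$; it follows simply from the boundedness of the coefficients $(a_j-\lambda b_j)/(1+\alpha_k)$ together with the mutual $\bH$-orthogonality of the terms $\tau_j(u^k)f_j$ and $\sum_{k,j}\|\tau_j(u^k)f_j\|_{\bH}^2=\|u\|_{\bH}^2$.
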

The proof of this lemma is analogous to the proof of Lemma 3.2 of \cite{GolKlu} and it uses the assumption (a5).

In the next sections we will consider eigenspaces of the Laplace operator as $SO(N)$-representations
with the action given by
$
\alpha u(x)=u({\alpha}^{-1}x) $ for $\alpha \in SO(N)$, $u \in \bV_{-\Delta}(\alpha_k)$, $x\in \cU.
$
 In computations we will use the following remark:

\begin{Remark}\label{rem:zero}
It is easy to prove that the eigenspace $\bV_{-\Delta}(0)$ consists only of the constant functions and therefore it is a trivial $SO(N)$-representation, which can be identified with $\bR$. For simplicity, we will write $\bV_{-\Delta}(0)=\bR$.
\end{Remark}

Denote by $\G$ the group $\Gamma \times SO(N)$, where $SO(N)$ is the special orthogonal group in dimension $N$. Note  that the space $\h$ with the scalar product given by \eqref{eq:iloczyn} is an orthogonal ${\G}$-representation with the ${\G}$-action given by
\begin{equation}\label{eq:action}
(\gamma, \alpha) (u)(x)= \gamma u({\alpha}^{-1}x)\  \text{ for }\  (\gamma, \alpha) \in {\G}, u \in \h, x\in \cU.
\end{equation}

As a consequence of (a4)--(a6) we obtain that $L$ is $\cG$-equivariant and $\Phi$ is $\cG$-invariant.

Define a $\cG$-equivariant approximation scheme $ \{\pi_n\colon\h \rightarrow \h\colon n \in \bN \cup \{0\} \}$ on $\h$ by
\begin{enumerate}
\item[(b1)] $\h^0 = \{0\},$
\item[(b2)] $ \h^n =  \bigoplus_{k=1}^n \bigoplus_{j=1}^{p} \bV_{-\Delta} (\alpha_k)$,
\item[(b3)] $\pi_n \colon \h \rightarrow \h$ is a natural $\cG$-equivariant projection such that $ \pi_n(\bH) = \h^n$ for $n \in \bN\cup\{0\}.$
\end{enumerate}

Note that from the definitions of $L$ and $\pi_n$ we have $L \circ \pi_n=\pi_n \circ L$ for all $n \in \bN \cup \{0\}$ and $\ker L = \bH^0$.

Summing up, the results of this section show that for an open, bounded, $\cG$-invariant set $\Omega \subset \bH$ such that $(\nabla \Phi)^{-1} (0) \cap \partial \Omega = \emptyset,$ the conditions (d1)--(d3) of Subsection \ref{subsec:defdegree} are satisfied.

\begin{Corollary}
For an open, bounded, $\cG$-invariant set $\Omega \subset \bH$ such that $(\nabla \Phi)^{-1} (0) \cap \partial \Omega = \emptyset,$ the degree $\nabla_{\cG}\textrm{-}\mathrm{deg}(\nabla \Phi, \Omega)$ is well-defined.
\end{Corollary}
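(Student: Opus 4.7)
The plan is to verify the three conditions (d1)--(d3) of Subsection \ref{subsec:defdegree} for the functional $\Phi(\cdot, \lambda)$ (at any fixed $\lambda \in \bR$) together with the approximation scheme $\{\pi_n\}$ introduced in (b1)--(b3). Conditions (d1) and (d3) are hypotheses of the Corollary, so the whole content of the proof is checking (d2); essentially every ingredient has already been prepared in Lemma \ref{lem:postacPhi} and the structural lemma that follows it.

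First I would recast $\Phi$ in the canonical form $\Phi(u, \lambda) = \tfrac{1}{2}\langle Lu, u\rangle_{\bH} - \eta(u, \lambda)$ required by (d2). Using \eqref{eq:iloczyn} and the pointwise diagonal action $L u = (-a_1 u_1, \dots, -a_p u_p)$, one has $\tfrac{1}{2}\langle Lu, u\rangle_{\bH} = -\tfrac{1}{2}\sum_{i=1}^p a_i \int_{\cU}(|\nabla u_i|^2 + |u_i|^2)\, dx$, so comparing with \eqref{eq:Phi} forces
\[
\eta(u, \lambda) = -\tfrac{1}{2}\int_{\cU} \sum_{i=1}^p a_i |u_i(x)|^2\, dx + \int_{\cU} F(u(x), \lambda)\, dx.
\]
Second, I would verify (d2)(1) for this $L$. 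Boundedness and self-adjointness are clear from the diagonal pointwise form; since each $a_i = \pm 1$, $L$ is a Hilbert-space isomorphism, hence Fredholm of index $0$, and $\ker L = \{0\} = \bH^0$ by (b1). The $\cG$-equivariance follows because, by (a4)--(a6), $A$ acts as the scalar $\pm 1$ on each $\Gamma$-subrepresentation $\bR^{p_1}$, $\bR^{p_2}$, while the $SO(N)$-action from \eqref{eq:action} affects only the spatial variable and commutes with pointwise multiplication. Finally, each subspace $\bH_k = \bigoplus_{j=1}^{p} \bV_{-\Delta}(\alpha_k)$ is $L$-invariant, so $L$ commutes with every $\pi_n$.

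Third, for (d2)(2), I would note that $\nabla_u \eta(\cdot, \lambda) = \nabla_u \Phi(\cdot, \lambda) - L(\cdot)$; by Lemma \ref{lem:postacPhi} this equals $-L\tilde{u}_0 + L_{\lambda B}(\cdot - \tilde{u}_0) - \nabla_u \eta_0(\cdot - \tilde{u}_0, \lambda)$, a sum of three $\cG$-equivariant, completely continuous contributions recorded in the lemma immediately following Lemma \ref{lem:postacPhi}. Complete continuity ultimately rests on the compactness of the Sobolev embedding $H^1(\cU) \hookrightarrow L^{q}(\cU)$ below the critical exponent combined with the subcritical growth assumption (a3); the $\cG$-equivariance uses $\Gamma$-invariance of $F$ (a6) and invariance of Lebesgue measure under the $SO(N)$-change of variables $x\mapsto \alpha^{-1}x$. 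This completes (d2), and together with (d1), (d3) the Corollary follows.

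The proof is in essence bookkeeping. The only step with any subtlety is absorbing the translation by $\tilde{u}_0$ into $\eta$ while keeping $L$ unchanged and Fredholm, so that the splitting $L + \text{compact}$ demanded by (d2) survives; beyond that, every required property has already been isolated in the preceding lemmas, so no genuine obstacle arises.
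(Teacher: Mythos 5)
Your proposal is correct and takes essentially the same route as the paper, whose own justification likewise consists of observing that Lemma \ref{lem:postacPhi}, the subsequent lemma on the properties of $L$, $L_{\lambda B}$ and $\nabla_u\eta_0$, and the approximation scheme (b1)--(b3) together verify conditions (d1)--(d3). (Only a cosmetic sign remark: with the convention $\Phi=\tfrac{1}{2}\langle Lu,u\rangle-\eta$ of (d2) one has $\nabla_u\eta=L-\nabla_u\Phi$, the negative of the expression you wrote, which affects neither the complete continuity nor the equivariance.)
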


\subsection{Global bifurcations from the orbit}

Recall that by $\tilde{u}_0$ we denote the constant function $\tilde{u}_0\equiv u_0$. From Lemma \ref{lem:postacPhi} it follows that $\tilde{u}_0$ is a critical point of $\Phi$ for every $\lambda \in \bR$. From the $\cG$-invariance of $\Phi$, we obtain that $g\tilde{u}_0$ is also a critical point of this functional for all $g \in \cG$. Consequently we have a critical orbit $\cG(\tilde{u}_0)\subset(\nabla_u \Phi(\cdot,\lambda))^{-1}(0)$ for every $\lambda\in\bR$.
We are going to study a phenomenon of global bifurcation from the set $\cT=\cG(\tilde{u}_0)\times\bR$. We call the elements of $\cT$ the trivial solutions of \eqref{eq:system}. We start with the necessary condition of the bifurcation.

From Fact \ref{fact:podejrzane} we obtain that the global bifurcation can occur only for $\lambda$ such that the orbit $\cG(\tilde{u}_0)\times \{\lambda\}$ is degenerate, i.e. $\dim \ker \nabla^2_u\Phi(\tilde{u}_0, \lambda) >\dim(\cG(\tilde{u}_0) \times \{\lambda\}).$ Denote by $\Lambda$ the set of all such $\lambda$.

\begin{Lemma}\label{lem:Lambdaell} The set $\Lambda$ is given by
\[
\Lambda=\bigcup_{ b_j \in \sigma(B_1) \setminus\{0\}} \bigcup_{\alpha_k \in \sigma(-\Delta;\cU)} \left\{\frac{\alpha_k}{ b_j}\right\}\cup \bigcup_{ b_j \in \sigma(B_2) \setminus\{0\}} \bigcup_{\alpha_k \in \sigma(-\Delta;\cU)} \left\{-\frac{\alpha_k}{ b_j}\right\}.
\]
\end{Lemma}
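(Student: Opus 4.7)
The plan is to identify $\Lambda$ by computing $\ker \nabla_u^2\Phi(\tilde{u}_0,\lambda)$ explicitly through the joint spectral decomposition supplied by Lemmas~\ref{lem:postacPhi} and~\ref{lem:operatorLlambdaB}. First I would use Lemma~\ref{lem:postacPhi} together with the vanishing $\nabla_u^2\eta_0(0,\lambda)=0$ (from the Remark preceding that lemma) to reduce the Hessian at $\tilde{u}_0$ to
$$
\nabla_u^2\Phi(\tilde{u}_0,\lambda) \;=\; L + L_{\lambda B}.
$$
Then, invoking Lemma~\ref{lem:operatorLlambdaB}, on each eigenspace $\bH_k$ decomposed along the eigenbasis $\{f_j\}$ of $B$ this operator acts in the $j$-th coordinate as multiplication by
$$
-a_j + \frac{a_j-\lambda b_j}{1+\alpha_k} \;=\; -\frac{a_j\alpha_k + \lambda b_j}{1+\alpha_k},
$$
so $\ker\nabla_u^2\Phi(\tilde{u}_0,\lambda)$ is the direct sum of the subspaces $f_j\otimes \bV_{-\Delta}(\alpha_k)\subset \bH_k$ for which $a_j\alpha_k+\lambda b_j = 0$.

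Next I would isolate the $\bH_1$-block. Since $SO(N)$ acts trivially on constant functions (Remark~\ref{rem:zero}), the orbit $\cG(\tilde{u}_0)$ lies in $\bH_1\cong \bR^p$ and coincides with $\Gamma(u_0)$, so $T_{\tilde{u}_0}\cG(\tilde{u}_0)=T_{u_0}\Gamma(u_0)$. Differentiating the $\Gamma$-equivariance of $\nabla_u F(\cdot,\lambda)$ shows that this tangent space always lies in $\ker \nabla_u^2\Phi(\tilde{u}_0,\lambda)$. On $\bH_1$ (where $\alpha_1=0$) the Hessian acts simply as $-\lambda B$, so its kernel there equals $\ker(\lambda B)=\ker\nabla^2 F(u_0,\lambda)$, which by assumption~(a7) has dimension $\dim\Gamma(u_0)$ and therefore coincides with $T_{u_0}\Gamma(u_0)$ itself.

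Combining these two steps, $\lambda\in\Lambda$ if and only if the kernel has a contribution outside $\bH_1$, i.e.\ if and only if there exist $k\ge 2$ and $j\in\{1,\ldots,p\}$ with $a_j\alpha_k+\lambda b_j=0$. I would then split $j$ according to (a4)--(a5): for $j\le p_1$ one has $a_j=-1$ and $b_j\in\sigma(B_1)$, yielding $\lambda = \alpha_k/b_j$ with $b_j\neq 0$; for $j>p_1$ one has $a_j=1$ and $b_j\in\sigma(B_2)$, yielding $\lambda = -\alpha_k/b_j$ with $b_j\neq 0$. Taking the union over the admissible pairs $(k,j)$ (where the value $\alpha_1=0$ is harmlessly absorbed, producing $\lambda=0$) recovers the formula stated in the lemma.

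The main point requiring care is the matching of the $\bH_1$-contribution with $T_{u_0}\Gamma(u_0)$: assumption~(a7) is needed to promote the general inclusion $T_{u_0}\Gamma(u_0)\subset \ker(\lambda B)$ to an equality, so that no ``spurious'' kernel in the constant-function block enlarges $\Lambda$ beyond the eigenvalue ratios $\pm\alpha_k/b_j$. Once this is in place the rest is a purely diagonal eigenvalue count on the decomposition $\bH=\mathrm{cl}\bigoplus_k \bH_k$.
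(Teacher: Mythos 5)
The paper does not actually prove this lemma: it only remarks that the proof is analogous to Lemma 3.1 of \cite{GolKluSte}. Your argument is precisely the standard computation that reference carries out, and for $\lambda\neq 0$ it is complete and correct: the Hessian $L+L_{\lambda B}$ acts on the $(k,j)$ block as multiplication by $-\frac{a_j\alpha_k+\lambda b_j}{1+\alpha_k}$, the $k=1$ block contributes exactly $\ker(\lambda B)$, which by (a7) has dimension $\mu_B(0)=\dim\Gamma(u_0)=\dim\cG(\tilde u_0)$, and the excess kernel is therefore governed by the equations $a_j\alpha_k+\lambda b_j=0$ with $b_j\neq 0$, giving $\lambda=\pm\alpha_k/b_j$ according to the sign of $a_j$.

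The one soft spot is $\lambda=0$. Your pivotal equivalence ``$\lambda\in\Lambda$ if and only if the kernel has a contribution outside $\bH_1$'' is false there: at $\lambda=0$ the condition $a_j\alpha_k=0$ forces $\alpha_k=0$, so the entire kernel sits inside the constant block $\bH_1\cong\bR^p$, yet $0\in\Lambda$ whenever $B\neq 0$ because $\dim\ker\nabla^2_u\Phi(\tilde u_0,0)=p>\mu_B(0)$. The parenthetical ``harmlessly absorbed'' does not substitute for the separate check that $0$ belongs to the displayed union exactly when $\sigma(B)\setminus\{0\}\neq\emptyset$, i.e.\ exactly when $p>\mu_B(0)$; also note that your appeal to (a7) in the form $\ker\nabla^2_uF(u_0,\lambda)=T_{u_0}\Gamma(u_0)$ only makes sense for $\lambda\neq 0$, since $\nabla^2_uF(u_0,0)=0$. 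Adding one sentence treating $\lambda=0$ separately closes the gap; everything else stands.
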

The proof of this lemma is analogous to the one of Lemma 3.1 of \cite{GolKluSte}.

The consideration of the sufficient condition in general is a difficult problem. To formulate this condition, we consider the additional assumption:

\begin{enumerate}
\item[(a8)] $\Gamma_{u_0}=\{e\}.$
\end{enumerate}
Note that this assumption and formula \eqref{eq:action} imply that $\cG_{\tilde{u}_0}=\{e\}\times SO(N).$
From the Lemma \ref{lem:admissible} it follows that the pair $(\cG, \cG_{\tilde{u}_0})$ is admissible and therefore we can use the results of Subsection \ref{subsec:globalbif}.

Let $\bW\subset\bH$ denote the space normal to the orbit $\cG(\tilde{u}_0)$ at $\tilde{u}_0$. Recall that $\bW$ is a $\cG_{\tilde{u}_0}$-representation (and therefore also an $SO(N)$-representation).

For $\lambda \in \Lambda$ define:
\begin{equation}\label{eq:defV1V2}
\begin{split}
&\bV_1(\lambda)= \bigoplus_{\substack{(\alpha_k, b_j)\in \sigma(-\Delta;\cU)\times \sigma(B_1)\setminus \{0\} \\\lambda  b_j = \alpha_k}}\bV_{-\Delta}(\alpha_k)^{\mu_{B_1}( b_j)},\\
&\bV_2(\lambda)=\bigoplus_{\substack{(\alpha_k, b_j)\in \sigma(-\Delta;\cU)\times \sigma(B_2)\setminus \{0\} \\\lambda  b_j = -\alpha_k}}\bV_{-\Delta}(\alpha_k)^{\mu_{B_2}( b_j)},\\
\end{split}
\end{equation}
where $\mu_{B}(b)$ denotes the multiplicity of $b$ as an eigenvalue of the matrix $B$. Note that we formally understand $\bV_{-\Delta}(\alpha_k)^{\mu_{B}( b)}$ as $\mathrm{span}\{
h\cdot f\colon h\in \bV_{-\Delta}(\alpha_{k}), f\in \bV_{B}(b)\}$, see also \cite{GolKluSte}. However, this space is isomorphic with the direct sum of $\mu_B(b)$ copies of $\bV_{-\Delta}(\alpha_k)$.
Since in our computations the important thing is the dimension of the spaces, we identify these spaces with such direct sums.

\begin{Remark}
From the above definition we can observe that $$\ker \nabla^2_u\Phi(\tilde{u}_0, \lambda)\cap \bW=\bV_1(\lambda)\oplus\bV_2(\lambda).$$
\end{Remark}

\begin{Theorem}\label{thm:glob}
Consider the system \eqref{eq:system} with the potential $F$ and $u_0$ satisfying the assumptions (a1)--(a8). Fix $\lambda_{0} \in \Lambda \setminus \{0\}$ and assume that $\bV_1(\lambda_0)\oplus\bR^{2m}$ is not $SO(N)$-equivalent to $\bV_2(\lambda_0)\oplus\bR^{2n}$ for any $m,n \in \bN\cup\{0\},$ where $\bR^{2m}$, $\bR^{2n}$ are trivial $SO(N)$-representations.
Then a global bifurcation of solutions of \eqref{eq:system} occurs from the orbit $\cG(\tilde{u}_0) \times \{\lambda_0\}.$
\end{Theorem}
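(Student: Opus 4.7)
The plan is to reduce the problem to verifying nontriviality of the bifurcation index on the space normal to the orbit, and then to use Fact \ref{fact:lemGarRyb} to derive a contradiction from the non-equivalence hypothesis. First, by assumption (a8) and formula \eqref{eq:action}, the isotropy group $\cG_{\tilde{u}_0}$ equals $\{e\}\times SO(N)$, which I will identify with $H=SO(N)$. Lemma \ref{lem:admissible} says that the pair $(\cG,H)$ is admissible, so by Theorem \ref{thm:RabH} it suffices to prove that $\BIF_{H}(\lambda_0)\neq \Theta\in U(H)$.

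Next I would compute $\BIF_{H}(\lambda_0)$ from the restriction $\Psi=\Phi_{|\bW}$, where $\bW$ is the space normal to $\cG(\tilde u_0)$ at $\tilde u_0$. By Lemma \ref{lem:postacPhi}, $\nabla_u\Psi(\cdot,\lambda)$ has the form $L+L_{\lambda B}-\nabla_u\eta_0(\cdot,\lambda)$ (restricted to $\bW$), where $\nabla_u\eta_0$ is completely continuous and vanishes to second order at $\tilde{u}_0$. Since $\lambda_0$ is isolated in $\Lambda$, for small $\varepsilon>0$ the point $\tilde{u}_0$ is an isolated critical point of $\Psi(\cdot,\lambda_0\pm\varepsilon)$, so the degree near $\tilde{u}_0$ equals the degree of the linearisation $L+L_{(\lambda_0\pm\varepsilon)B}$ on a small ball in $\bW$, by the linearisation property of the degree (Remark \ref{rem:propdeg}). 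Using Lemma \ref{lem:operatorLlambdaB}, this linear operator is diagonalised by the decomposition $\bH=\bigoplus_k \h_k$ combined with the eigendecomposition of $B_1$ and $B_2$; its eigenvalues on the $(k,j)$-block are proportional to $a_j-(\lambda_0\pm\varepsilon)b_j$, which by the definition of $\Lambda$ change sign precisely on the finite-dimensional subspaces $\bV_1(\lambda_0)\subset\bW$ and $\bV_2(\lambda_0)\subset\bW$.

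I would then apply the product formula to factor out the invariant contribution of all eigendirections whose sign is unchanged on $[\lambda_0-\varepsilon,\lambda_0+\varepsilon]$; denoting this common invertible element by $\alpha\in U(H)$, a sign analysis (the $p_1$ block contributes a sign flip $+\Id\to -\Id$ on $\bV_1(\lambda_0)$, while the opposite sign $a_j=+1$ for the $p_2$ block produces the reverse flip $-\Id\to +\Id$ on $\bV_2(\lambda_0)$) gives, after factoring, the identity
\begin{equation*}
\BIF_{H}(\lambda_0)=\alpha\star\Bigl(\nabla_{H}\text{-}\deg(-\Id,B(\bV_1(\lambda_0)))-\nabla_{H}\text{-}\deg(-\Id,B(\bV_2(\lambda_0)))\Bigr),
\end{equation*}
where $\alpha$ is invertible in $U(H)$. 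Consequently, $\BIF_{H}(\lambda_0)=\Theta$ would force
\begin{equation*}
\nabla_{H}\text{-}\deg(-\Id,B(\bV_1(\lambda_0)))=\nabla_{H}\text{-}\deg(-\Id,B(\bV_2(\lambda_0))),
\end{equation*}
and then Fact \ref{fact:lemGarRyb} would produce $m,n\in\bN\cup\{0\}$ with $\bV_1(\lambda_0)\oplus\bR^{2m}$ $SO(N)$-equivalent to $\bV_2(\lambda_0)\oplus\bR^{2n}$, contradicting the hypothesis. Hence $\BIF_{H}(\lambda_0)\neq\Theta$ and Theorem \ref{thm:RabH} yields a global bifurcation from $\cG(\tilde{u}_0)\times\{\lambda_0\}$.

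The main obstacle is the bookkeeping in the second-to-third step: one must justify that the approximation-scheme definition \eqref{eq:defdeg} of the strongly indefinite degree on $\bW$ (with the $H$-equivariant approximation scheme $\{\tilde\pi_n\}$ introduced in Subsection \ref{subsec:indeksorbity}) really reduces $\BIF_{H}(\lambda_0)$ to a finite-dimensional degree computation and that the common invertible factor $\alpha$ cancels cleanly. This is where the product formula and the careful separation of the finite-dimensional ``resonant'' block $\bV_1(\lambda_0)\oplus\bV_2(\lambda_0)$ from the remainder (which has trivial change in Morse index between $\lambda_0\pm\varepsilon$) must be executed with control on the sign conventions induced by the non-cooperative sign matrix $A$.
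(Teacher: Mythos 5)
Your proposal is correct and follows essentially the same route as the paper: reduction via admissibility of $(\cG,\{e\}\times SO(N))$ and Theorem \ref{thm:RabH} to the nontriviality of $\BIF_{SO(N)}(\lambda_0)$, linearisation and the approximation-scheme definition \eqref{eq:defdeg} on $\bW$, the product formula to cancel the invertible non-resonant factor and leave $\nabla_{SO(N)}\text{-}\mathrm{deg}(-Id,B(\bV_1(\lambda_0)))-\nabla_{SO(N)}\text{-}\mathrm{deg}(-Id,B(\bV_2(\lambda_0)))$, and finally Fact \ref{fact:lemGarRyb}. One small slip worth fixing: the eigenvalues of the linearisation on the $(k,j)$-block are proportional to $-a_j\alpha_k-\hat\lambda b_j$ (not to $a_j-\hat\lambda b_j$, which is the eigenvalue of $L_{\hat\lambda B}$ alone), but your identification of the resonant subspaces as $\bV_1(\lambda_0)\oplus\bV_2(\lambda_0)$ and the subsequent sign analysis are the correct ones, so the argument is unaffected.
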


\begin{proof}
Choose $\varepsilon>0$ such that $\Lambda\cap[\lambda_{0}-\varepsilon,\lambda_{0}+\varepsilon]=\{\lambda_{0}\}$. From Lemma \ref{lem:Lambdaell} it follows that such a choice is always possible.
Let $\Omega \subset \bH$ be an open, bounded and ${\G}$-invariant subset such that $\nabla_u \Phi (\cdot, \lambda_0\pm\varepsilon)^{-1}(0)\cap  cl(\Omega)={\G}(\tilde{u}_0).$  Without loss of generality, we assume that $ \Omega=\cG\cdot B_{\delta}(\tilde{u}_0,\bW)$, where $\delta$ is sufficiently small.
Then the index $\BIF_{\cG}(\lambda_0)$ given by \eqref{eq:defind} is well-defined. We are going to prove its nontriviality.

Since the pair $(\cG, \cG_{\tilde{u}_0})$ is admissible,
 from Theorem \ref{thm:RabH} it follows that to prove the assertion we have to prove that $\BIF_{\cG_{\tilde{u}_0}}(\lambda_0)\neq\Theta\in U(\cG_{\tilde{u}_0})$. It is easy to see that this condition is equivalent with $\BIF_{SO(N)}(\lambda_0)\neq\Theta\in U(SO(N))$, where $\BIF_{SO(N)}(\lambda_0)$ is defined by:
\begin{equation}\label{eq:BIFSO(N)}
\begin{split}
&\BIF_{SO(N)}(\lambda_0)=\\&\nabla_{SO(N)}\textrm{-}\mathrm{deg}(\nabla_u\Psi(\cdot, \lambda_0+\varepsilon),B_{\delta}(\tilde{u}_0,\bW)) - \nabla_{SO(N)}\textrm{-}\mathrm{deg}(\nabla_u\Psi(\cdot, \lambda_0-\varepsilon),B_{\delta}(\tilde{u}_0,\bW)),
\end{split}\end{equation}
where $\Psi=\Phi_{|\bW}$.

Therefore we are going to concentrate on computing $\nabla_{SO(N)}\textrm{-}\mathrm{deg}(\nabla_u \Psi(\cdot, \hat{\lambda}),B_{\delta}(\tilde{u}_0,\bW))$ for $\hat{\lambda}=\lambda_0\pm\varepsilon$.  Since $\nabla_u^2 \Psi(\tilde{u}_0, \hat{\lambda})$ is an $SO(N)$-equivariant isomorphism, we can apply the linearisation property of the degree. Therefore
 $$\nabla_{SO(N)}\textrm{-}\mathrm{deg}(\nabla_u\Psi(\cdot, \hat{\lambda}),B_{\delta}(\tilde{u}_0,\bW))=\nabla_{SO(N)}\textrm{-}\mathrm{deg}(\nabla_u^2 \Psi(\tilde{u}_0, \hat{\lambda}),B(\bW)).$$

From the definition of the degree for invariant strongly indefinite functionals, since $\bH^0=\{0\}$, we have:
\begin{equation}\label{eq:deglin}
\begin{split}
&\nabla_{SO(N)}\textrm{-}\mathrm{deg}(\nabla_u^2 \Psi(\tilde{u}_0, \hat{\lambda}),B(\bW))=\\
&\nabla_{SO(N)}\textrm{-}\mathrm{deg}(L, B(\bH^n\cap\bW))^{-1}\star \nabla_{SO(N)}\textrm{-}\mathrm{deg}(L-L_{\hat{\lambda} B}, B(\bH^n\cap\bW)),
\end{split}
\end{equation}
where $n$ is sufficiently large.

Now we are going to concentrate on computing the latter factor of \eqref{eq:deglin}.
 Denote by $\cW(\hat{\lambda},n)$ the direct sum of the eigenspaces of $(L-L_{\hat{\lambda} B})_{|\bH^n \cap \bW}$ corresponding to the negative eigenvalues. Since there is no negative eigenvalues of $L-L_{\hat{\lambda} B}$ on $\bH^n \ominus \bW$, we can consider $(L-L_{\hat{\lambda} B})_{|\bH^n }$ instead of $(L-L_{\hat{\lambda} B})_{|\bH^n \cap \bW}$.
 From Lemma \ref{lem:operatorLlambdaB} we have
\[
\sigma(L-L_{\lambda B_1})=\left\{\frac{\alpha_k-\lambda b_j}{1+\alpha_k}\colon  b_j \in \sigma(B_1),\ \alpha_k\in \sigma(-\Delta;\cU)\right\}, \]
\[  \sigma(L-L_{\lambda B_2})=\left\{\frac{-\alpha_k-\lambda b_j}{1+\alpha_k}\colon   b_j \in \sigma(B_2),\ \alpha_k\in \sigma(-\Delta;\cU)\right\}
\]
for $\lambda\in\bR$.
Therefore we obtain:
$$\cW(\hat{\lambda},n)=\h_{(\hat{\lambda}, n, B_1,{+})} \oplus \h_{(\hat{\lambda}, n, B_2,{-})},$$
where, for $\lambda\in\bR$,
\begin{equation}\label{eq:defHbrzydkie}
\begin{split}
&\h_{(\lambda, n, B_1,{+})} = \bigoplus_{k=1}^n \; \bigoplus_{\substack{(\alpha_k, b_j)\in \sigma(-\Delta;\cU)\times \sigma(B_1) \\ \lambda  b_j > \alpha_k}} \bV_{-\Delta}(\alpha_k)^{\mu_{B_1} ( b_j)},\\
&\h_{(\lambda, n, B_2,{-})} = \bigoplus_{k=1}^n \; \bigoplus_{\substack{(\alpha_k, b_j) \in  \sigma(-\Delta;\cU)\times \sigma(B_2) \\ \lambda  b_j > -\alpha_k}} \bV_{-\Delta}(\alpha_k)^{\mu_{B_2} ( b_j)}.
\end{split}
\end{equation}
From the homotopy invariance and the product formula of the degree, we obtain:
\begin{equation}
\begin{split}
&\nabla_{SO(N)}\textrm{-}\mathrm{deg}(L-L_{\hat{\lambda} B}, B(\bW\cap \bH^n))=\\
&  \nabla_{SO(N)}\textrm{-}\mathrm{deg}(-Id, B(\h_{(\hat{\lambda}, n, B_1,{+})}) ) \star \nabla_{SO(N)}\textrm{-}\mathrm{deg}(-Id, B(\h_{(\hat{\lambda}, n, B_2,{-})}) ),
\end{split}
\end{equation}
where in the case when $\bX$ is empty, we put $\nabla_{SO(N)}\textrm{-}\mathrm{deg}(-Id, B(\bX) )=\bI.$
Summing up, we have:
\begin{equation}\label{eq:indlambda}
\begin{split}
&\BIF_{SO(N)}(\lambda_0)= \nabla_{SO(N)}\textrm{-}\mathrm{deg}(L, B(\bH^n\cap\bW))^{-1} \star\\
 & \Big(\nabla_{SO(N)}\textrm{-}\mathrm{deg} (-Id, B(\h_{(\lambda_0+\varepsilon,n, B_1,{+})})) \star  \nabla_{SO(N)}\textrm{-}\mathrm{deg} (-Id, B(\h_{(\lambda_0+\varepsilon,n, B_2,{-})}))-\\&\nabla_{SO(N)}\textrm{-}\mathrm{deg} (-Id, B(\h_{(\lambda_0-\varepsilon,n, B_1,{+})})) \star  \nabla_{SO(N)}\textrm{-}\mathrm{deg} (-Id, B(\h_{(\lambda_0-\varepsilon,n, B_2,{-})}) )\Big).
\end{split}
\end{equation}
Let us first consider the case $\lambda_0>0$.  Since $\lambda_0 \in \Lambda$, we have $\lambda_0=\frac{\alpha_k}{ b_j}$ for $ b_j \in \sigma(B_1)\setminus \{0\}, \alpha_k \in \sigma(-\Delta;\cU)$ or $\lambda_0=-\frac{\alpha_k}{ b_j}$ for $ b_j \in \sigma(B_2)\setminus \{0\}, \alpha_k \in \sigma(-\Delta;\cU)$. Therefore for $ b_j \in \sigma(B_1)$ we have $ b_j>0$ and from the choice of $\varepsilon$ it follows that $(\lambda_0+\varepsilon) b_j>\alpha_k$ iff $\lambda_0  b_j \geq \alpha_k$ and $(\lambda_0-\varepsilon) b_j>\alpha_k$ iff $\lambda_0  b_j >\alpha_k$.
Hence:
\begin{equation*}
\begin{split}
&\bH_{(\lambda_0+\varepsilon, n, B_1, +)}=\bH_{(\lambda_0, n, B_1, +)} \oplus \bV_1(\lambda_0),\\
&\bH_{(\lambda_0-\varepsilon, n, B_1, +)}=\bH_{(\lambda_0, n, B_1, +)}.
\end{split}
\end{equation*}
In the similar way we obtain
\begin{equation*}
\begin{split}
&\bH_{(\lambda_0+\varepsilon, n, B_2, -)}=\bH_{(\lambda_0, n, B_2, -)}, \\
&\bH_{(\lambda_0-\varepsilon, n, B_2, -)}=\bH_{(\lambda_0, n, B_2, -)}\oplus \bV_2(\lambda_0).
\end{split}
\end{equation*}
Therefore
\begin{equation}\label{eq:lambdadodatnie}
\begin{split}
&\BIF_{SO(N)}(\lambda_0)=\nabla_{SO(N)}\textrm{-}\mathrm{deg}(L, B(\bH^n\cap\bW))^{-1} \star\\
&\star \nabla_{SO(N)}\textrm{-}\mathrm{deg} (-Id, B(\h_{(\lambda_0,n, B_1,{+})})) \star  \nabla_{SO(N)}\textrm{-}\mathrm{deg} (-Id, B(\h_{(\lambda_0,n, B_2,{-})}) ))\star\\
&\star\left(\nabla_{SO(N)}\textrm{-}\mathrm{deg} (-Id, B(\bV_1(\lambda_0)))-\nabla_{SO(N)}\textrm{-}\mathrm{deg} (-Id, B(\bV_2(\lambda_0))) \right).
\end{split}
\end{equation}
Analogously, for $\lambda_0<0$ we have
\begin{equation}\label{eq:lambdaujemne}
\begin{split}
&\BIF_{SO(N)}(\lambda_0)=\nabla_{SO(N)}\textrm{-}\mathrm{deg}(L, B(\bH^n\cap\bW))^{-1} \star\\
&\star \nabla_{SO(N)}\textrm{-}\mathrm{deg} (-Id, B(\h_{(\lambda_0,n, B_1,{+})})) \star  \nabla_{SO(N)}\textrm{-}\mathrm{deg} (-Id, B(\h_{(\lambda_0,n, B_2,{-})}) ))\star\\
&\star\left(\nabla_{SO(N)}\textrm{-}\mathrm{deg} (-Id, B(\bV_2(\lambda_0)))-\nabla_{SO(N)}\textrm{-}\mathrm{deg} (-Id, B(\bV_1(\lambda_0))) \right).
\end{split}
\end{equation}

Therefore, since the degrees of $L$ and of $-Id$ are invertible (see \cite{GolRyb2011}), the condition $\BIF_{SO(N)}(\lambda_0)\neq \Theta$ is satisfied if
$$\nabla_{SO(N)}\textrm{-}\mathrm{deg} (-Id, B(\bV_1(\lambda_0))) \neq \nabla_{SO(N)}\textrm{-}\mathrm{deg} (-Id, B(\bV_2(\lambda_0))).$$
Using Fact \ref{fact:lemGarRyb}, we obtain the assertion.

\end{proof}

In the following theorem we consider the case $\lambda_0=0$. Denote by $m^+(B)$ the positive Morse index, i.e. the sum of the multiplicities of the positive eigenvalues of $B$.

\begin{Theorem}\label{thm:globw0} Consider the system \eqref{eq:system} with the potential $F$ and $u_0$ satisfying the assumptions (a1)--(a8). Let $\lambda_0=0$ and assume that $(-1)^{m^+(B)} \neq (-1)^{m^-(B)}$. Then a global bifurcation of solutions of \eqref{eq:system} occurs from the orbit $\cG(\tilde{u}_0) \times \{\lambda_0\}.$
\end{Theorem}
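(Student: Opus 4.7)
My plan is to follow the pattern of the proof of Theorem \ref{thm:glob} essentially verbatim up through formula \eqref{eq:indlambda}, since the assumptions (a1)--(a8) are identical and the reduction to the $SO(N)$-degree via admissibility of $(\cG, \cG_{\tilde{u}_0})$ and Theorem \ref{thm:RabH} is unchanged. First I would pick $\varepsilon>0$ so that $\Lambda\cap[-\varepsilon,\varepsilon]=\{0\}$ (possible by Lemma \ref{lem:Lambdaell}), choose $\Omega=\cG\cdot B_{\delta}(\tilde{u}_0,\bW)$, and apply the linearisation property to reduce $\nabla_{SO(N)}\textrm{-}\mathrm{deg}(\nabla_u\Psi(\cdot,\pm\varepsilon),B_{\delta}(\tilde{u}_0,\bW))$ to the degree of $L-L_{\pm\varepsilon B}$ restricted to $\bW\cap\bH^n$. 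After invoking the product formula, I arrive at the expression \eqref{eq:indlambda} with $\lambda_0=0$.

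The crucial new step is to inspect the spaces $\bH_{(\pm\varepsilon, n, B_i,\pm)}$ defined by \eqref{eq:defHbrzydkie} when $\lambda_0=0$. Since the eigenvalues $b_j$ lie in a finite set and $\alpha_k$ ranges over $\sigma(-\Delta;\cU)$, for $\varepsilon$ sufficiently small the condition $\pm\varepsilon b_j>\alpha_k$ (for $B_1$) with $\alpha_k>0$ is never satisfied, while for $\alpha_k=0$ it becomes $\pm b_j>0$. Hence
\[
\bH_{(\varepsilon,n,B_1,+)}=\bV_{-\Delta}(0)^{m^+(B_1)},\qquad \bH_{(-\varepsilon,n,B_1,+)}=\bV_{-\Delta}(0)^{m^-(B_1)}.
\]
For $B_2$ the condition $\pm\varepsilon b_j>-\alpha_k$ is satisfied for every $b_j$ whenever $\alpha_k>0$ (small $\varepsilon$), so the contributions from $\alpha_k>0$ are identical on the two sides and can be packaged into a common subrepresentation $\cR\subset\bH^n\cap\bW$, while the $\alpha_k=0$ part contributes $\bV_{-\Delta}(0)^{m^\pm(B_2)}$. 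Thus
\[
\bH_{(\pm\varepsilon,n,B_2,-)}=\bV_{-\Delta}(0)^{m^\pm(B_2)}\oplus\cR.
\]

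Using Remark \ref{rem:zero}, $\bV_{-\Delta}(0)$ is a trivial $SO(N)$-representation, so Fact \ref{fact:-Idontriv} gives
\[
\nabla_{SO(N)}\textrm{-}\mathrm{deg}(-Id,B(\bV_{-\Delta}(0)^{m^\pm(B_i)}))=(-1)^{m^\pm(B_i)}\cdot\bI.
\]
Combining this with the product formula and the identity $m^\pm(B)=m^\pm(B_1)+m^\pm(B_2)$ coming from (a5), the bracketed difference in \eqref{eq:indlambda} becomes
\[
\bigl[(-1)^{m^+(B)}-(-1)^{m^-(B)}\bigr]\cdot\nabla_{SO(N)}\textrm{-}\mathrm{deg}(-Id,B(\cR)),
\]
while the remaining factors $\nabla_{SO(N)}\textrm{-}\mathrm{deg}(L,B(\bH^n\cap\bW))^{-1}$ and $\nabla_{SO(N)}\textrm{-}\mathrm{deg}(-Id,B(\cR))$ are invertible in $U(SO(N))$ (see \cite{GolRyb2011}). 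The hypothesis $(-1)^{m^+(B)}\neq(-1)^{m^-(B)}$ therefore forces $\BIF_{SO(N)}(0)\neq\Theta$, and hence $\BIF_{\cG}(0)\neq\Theta$ by admissibility, so Theorem \ref{thm:Rab} yields the global bifurcation.

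The main obstacle I anticipate is the bookkeeping of the spaces $\bH_{(\pm\varepsilon,n,B_i,\pm)}$: one must argue cleanly that the ``$\alpha_k>0$'' pieces appearing on the $B_2$ side are genuinely the same representation for $+\varepsilon$ and $-\varepsilon$ (not merely of the same dimension), so that their degrees literally cancel under the product formula rather than merely coinciding up to some $SO(N)$-equivalence. This is why the small-$\varepsilon$ choice and the finiteness of $\sigma(B)$ are essential, and why $\lambda_0=0$ is the distinguished case where the trivial eigenspace $\bV_{-\Delta}(0)=\bR$ is the only ``new'' contribution on each side, allowing Fact \ref{fact:-Idontriv} to reduce the whole comparison to a sign computation.
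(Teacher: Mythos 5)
Your proposal is correct and follows essentially the same route as the paper's own proof: reduce to $\BIF_{SO(N)}(0)\neq\Theta$ via admissibility and formula \eqref{eq:indlambda}, identify $\bH_{(\pm\varepsilon,n,B_1,+)}=\bV_{-\Delta}(0)^{m^\pm(B_1)}$ and $\bH_{(\pm\varepsilon,n,B_2,-)}=\bV_{-\Delta}(0)^{m^\pm(B_2)}\oplus(\bV(n)\ominus\bV(1))^{p_2}$, and conclude with Remark \ref{rem:zero} and Fact \ref{fact:-Idontriv}. The common summand you call $\cR$ is exactly the paper's $(\bV(n)\ominus\bV(1))^{p_2}$, and your cancellation argument matches the paper's computation.
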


\begin{proof}
Let $\varepsilon>0$ be such that $\Lambda \cap [-\varepsilon, \varepsilon]=\{\lambda_0\}.$
As in the proof of the previous theorem, one can show that the assertion follows from the condition $\BIF_{SO(N)}(0)\neq \Theta \in U(SO(N)).$ Moreover, the formula \eqref{eq:indlambda} remains valid in this case.
In the following we describe spaces $\bH_{( \varepsilon, n, B_1, +)}$,  $\bH_{(- \varepsilon, n, B_2, -)}$.

Considering the condition $\varepsilon  b_j>\alpha_k$ we obtain $ b_j>0$ and hence $\varepsilon>\frac{\alpha_k}{ b_j}$, which, from the choice of $\varepsilon$, is satisfied only for $\alpha_k=0$. Analogously, considering $\varepsilon  b_j>-\alpha_k$ we obtain the following cases:
\begin{enumerate}
\item for $ b_j>0$ this inequality is satisfied for all $\alpha_k \in \sigma(-\Delta; \cU)$,
\item for $ b_j=0$ it is satisfied for $\alpha_k \in \sigma(-\Delta; \cU)\setminus \{0\}$,
\item for $ b_j<0$ this inequality is equivalent to $-\varepsilon>\frac{\alpha_k}{ b_j}$, which again is satisfied for $\alpha_k \in \sigma(-\Delta; \cU)\setminus \{0\}$. Summing up, we have:
\end{enumerate}
\begin{equation*}
\begin{split}
\bH_{(\varepsilon, n, B_1,+)}&=\bigoplus_{ b_j\in\sigma(B_1),  b_j>0} \bV_{-\Delta}(0)^{\mu_{B_1}( b_j)}=\bV_{-\Delta}(0)^{m^+(B_1)},\\
\bH_{(\varepsilon, n, B_2,-)}&=\bigoplus_{k=1}^n \bigoplus_{ b_j\in\sigma(B_2), b_j>0} \bV_{-\Delta}(\alpha_k)^{\mu_{B_2} ( b_j)}
\oplus \bigoplus_{k=2}^n \bigoplus_{ b_j\in\sigma(B_2), b_j\leq0} \bV_{-\Delta}(\alpha_k)^{\mu_{B_2} ( b_j)}=\\&=\bV(n)^{m^+(B_2)}
\oplus  (\bV(n)\ominus\bV(1))^{m^-(B_2)+\mu_{B_2} (0)},
\end{split}
\end{equation*}
where $\bV(n)=\bigoplus_{k=1}^n \bV_{-\Delta} (\alpha_k)$. In the similar way, for $-\varepsilon$ we obtain:
\begin{equation*}
\begin{split}
\bH_{(-\varepsilon, n, B_1,+)}&=\bigoplus_{ b_j\in\sigma(B_1),  b_j<0} \bV_{-\Delta}(0)^{\mu_{B_1}( b_j)}= \bV_{-\Delta}(0)^{m^-(B_1)},\\
\bH_{(-\varepsilon, n, B_2,-)}&=\bigoplus_{k=1}^n \bigoplus_{ b_j\in\sigma(B_2), b_j<0} \bV_{-\Delta}(\alpha_k)^{\mu_{B_2} ( b_j)}\oplus \bigoplus_{k=2}^n \bigoplus_{ b_j\in\sigma(B_2), b_j\geq0} \bV_{-\Delta}(\alpha_k)^{\mu_{B_2} ( b_j)}=\\ &=\bV(n)^{m^-(B_2)} \oplus
(\bV(n)\ominus\bV(1))^{m^+(B_2)+\mu_{B_2} (0)}.
\end{split}
\end{equation*}

Hence, since $m^{\pm}(B_1)+m^{\pm}(B_2)=m^{\pm}(B)$, using Remark \ref{rem:zero} and Fact \ref{fact:-Idontriv} we get
\begin{multline*}
\BIF_{SO(N)}(0)=
\nabla_{SO(N)}\textrm{-}\mathrm{deg}(L, B(\bH^n\cap\bW))^{-1} \star\nabla_{SO(N)}\textrm{-}\mathrm{deg}(-Id, B(\bV(n)\ominus\bV(1)))^{p_2}\star
\\
\star\left(\nabla_{SO(N)}\textrm{-}\mathrm{deg}(-Id, B(\bV_{-\Delta}(0)))^{m^+(B)}- (\nabla_{SO(N)}\textrm{-}\mathrm{deg}(-Id, B(\bV_{-\Delta}(0)))^{m^-(B)}\right)=\\
=\nabla_{SO(N)}\textrm{-}\mathrm{deg}(L, B(\bH^n\cap\bW))^{-1} \star\nabla_{SO(N)}\textrm{-}\mathrm{deg}(-Id, B(\bV(n)\ominus\bV(1)))^{p_2}\star\\
\star\left((-1)^{m^+(B)}- (-1)^{m^-(B)}\right),
\end{multline*}
which is not equal to $\Theta \in U(SO(N))$ if $(-1)^{m^+(B)} \neq (-1)^{m^-(B)}$.
\end{proof}

\begin{Example}
Consider the system \eqref{eq:system} for $\cU$ being the 2-dimensional unit ball $B^2$. Assume that the conditions (a1)--(a8) are satisfied and
\begin{equation}\label{eq:exwarunek1}
\sigma(B_1) \cap \sigma(-B_2) = \emptyset.
\end{equation}
Let $\lambda_0 \in \bR \setminus \{0\}$ be such that
\begin{equation}\label{eq:exwarunek2}
(\sigma(\lambda_0 B_1) \cap (\sigma(-\Delta; B^2)\setminus \{0\}))\cup (\sigma(\lambda_0 B_2) \cap(\sigma(\Delta;B^2) \setminus \{0\}))= \{\alpha\},
\end{equation}
where $\sqrt{\alpha}$ is not a solution of $J_0'(x)=0$, for $J_0$ being the Bessel function of order $0$.

Under such assumptions, $\lambda_0=\frac{\alpha}{b}$  for  $\alpha \in (\sigma(-\Delta; B^2)\cup \sigma(\Delta; B^2))\setminus \{0\} $ and some $b\in \sigma(B) \setminus \{0\}$.
The conditions \eqref{eq:exwarunek1} and \eqref{eq:exwarunek2} imply that
\begin{equation*}
\begin{split}
&\{(\alpha_k,  b_j)\in (\sigma(-\Delta; B^2) \setminus \{0\})\times \sigma(B_1)\colon \lambda_0 b_j=\alpha_k \}\cup\\& \cup \{(\alpha_k,  b_j)\in (\sigma(\Delta; B^2)\setminus \{0\}) \times \sigma(B_2)\colon \lambda_0  b_j=-\alpha_k \}=\{(\alpha, b)\}.
\end{split}
\end{equation*}
Therefore one of the spaces $\bV_1(\lambda_0), \bV_2(\lambda_0)$ is empty and the other one is the direct sum of some number of copies of $\bV_{-\Delta}(\alpha).$ From the description  of the eigenspaces of the Laplace operator (with Neumann boundary conditions) on the ball $B^2$ (see for example Corollary 5.14 of \cite{GolKluSte}), we obtain that $\bV_{-\Delta}(\alpha)$ is a nontrivial $SO(2)$-representation. Therefore the assumptions of Theorem \ref{thm:glob} are satisfied, hence  a global bifurcation of solutions of the system occurs from the orbit $\cG(\tilde{u}_0) \times \{\lambda_0\}.$

Note that to obtain the nontriviality of $\bV_{-\Delta}(\alpha)$ as an $SO(2)$-representation it is enough to verify that $\sqrt{\alpha}$ is a solution of $J_l'(x)=0$ for some $l>0$ and $J_l$ being the Bessel function of order $l$, see Corollary 5.14 of \cite{GolKluSte}.
\end{Example}

\begin{Example}
Replace in the above example $B^2$ by the $N$-dimensional unit ball $B^N$ and suppose that the conditions \eqref{eq:exwarunek1} and \eqref{eq:exwarunek2} are satisfied.
Then if $\sqrt{\alpha}$ is not a root of $J_{\frac{N-2}{2}}'(x)-\frac{N-2}{2x}J_{\frac{N-2}{2}}(x)=0$, for $J_{\frac{N-2}{2}}$ being the Bessel function of order $\frac{N-2}{2}$, then, arguing as in the above example and applying Corollary 5.12 of \cite{GolKluSte}, we again obtain a global bifurcation of solutions of the system from the orbit $\cG(\tilde{u}_0) \times \{\lambda_0\}.$
\end{Example}

\subsection{Unbounded sets of solutions}

In this subsection we are going to study the structure of sets of solutions of the system \eqref{eq:system}, bifurcating from the orbit. To this end we apply the results from Subsection \ref{subsec:globalbif}. We assume that the conditions (a1)--(a8) are satisfied. Moreover, to avoid technicalities, we put an additional assumption, namely
\begin{enumerate}
\item[(a9)] $B_1=\diag \{0,\ldots,0,1,\ldots,1  \}$, $B_2=Id$, in particular $\mu_{B}(0)=\mu_{B_1}(0)=\dim \Gamma(u_0)$.
\end{enumerate}

Obviously, if $p_1-\mu_{B}(0)>0$  then  $\sigma(B_1)=\{0,1\}$ and if $p_2>0$ then $\sigma(B_2)=\{1\}$. From this and Lemma \ref{lem:Lambdaell} it immediately follows:

\begin{Lemma}\label{lem:IlSigma}
Under the above assumptions,
\[\Lambda=\left\{\begin{array}{clc}
\sigma(-\Delta;\cU)\cup \sigma(\Delta;\cU), & \text{ when }& p_1-\mu_{B}(0)>0, p_2>0, \\
\sigma(-\Delta;\cU), & \text{ when }& p_1-\mu_{B}(0)>0, p_2=0, \\
 \sigma(\Delta;\cU), & \text{ when }& p_1-\mu_{B}(0)=0, p_2>0.
\end{array}\right.
\]
\end{Lemma}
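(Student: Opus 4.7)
The proof will be a direct unwinding of the formula for $\Lambda$ from Lemma~\ref{lem:Lambdaell} under the specific structure imposed by hypothesis (a9). The key observation is that (a9) essentially reduces $\sigma(B_1)\setminus\{0\}$ and $\sigma(B_2)\setminus\{0\}$ to very small sets, so the unions appearing in Lemma~\ref{lem:Lambdaell} collapse.

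First I would inspect $\sigma(B_i)\setminus\{0\}$ for $i=1,2$. Since $B_1 = \diag\{0,\ldots,0,1,\ldots,1\}$ with exactly $\mu_B(0)$ zeros on the diagonal, we have three subcases: if $p_1 - \mu_B(0) > 0$ then $\sigma(B_1)\setminus\{0\} = \{1\}$; if $p_1 - \mu_B(0) = 0$ and $p_1 > 0$ then $\sigma(B_1) = \{0\}$ so $\sigma(B_1)\setminus\{0\}=\emptyset$; and if $p_1 = 0$ then $\sigma(B_1) = \emptyset$ as well. In particular, the first union in the formula of Lemma~\ref{lem:Lambdaell} is empty precisely when $p_1 - \mu_B(0) = 0$, and otherwise equals $\{\alpha_k : \alpha_k \in \sigma(-\Delta;\cU)\} = \sigma(-\Delta;\cU)$. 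Similarly, since $B_2 = Id$, we have $\sigma(B_2)\setminus\{0\} = \{1\}$ when $p_2 > 0$ and $\sigma(B_2)\setminus\{0\} = \emptyset$ when $p_2 = 0$; hence the second union equals $\{-\alpha_k : \alpha_k \in \sigma(-\Delta;\cU)\}$ in the former case and is empty in the latter.

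Next I would identify $\{-\alpha_k : \alpha_k \in \sigma(-\Delta;\cU)\}$ with $\sigma(\Delta;\cU)$, which is immediate from the fact that $\Delta = -(-\Delta)$ and hence the spectra are related by a sign flip. Substituting the four possible combinations into the formula of Lemma~\ref{lem:Lambdaell} then gives exactly the three cases listed in the statement (note that the case $p_1 - \mu_B(0) = 0$ together with $p_2 = 0$ is excluded since it would give $\Lambda = \emptyset$, and moreover forces $B$ to be trivial or identically zero on the relevant blocks, which is not a case of interest; it simply does not appear in the statement).

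There is no real obstacle here — this is a bookkeeping computation. The only point to be careful about is that $0 \in \sigma(-\Delta;\cU)$ is allowed, so in the first case the sets $\sigma(-\Delta;\cU)$ and $\sigma(\Delta;\cU)$ share the element $0$; this is harmless since we take their union. The conclusion then follows directly.
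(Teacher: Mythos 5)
Your proof is correct and follows essentially the same route as the paper, which likewise obtains the result by noting that under (a9) one has $\sigma(B_1)=\{0,1\}$ when $p_1-\mu_B(0)>0$ and $\sigma(B_2)=\{1\}$ when $p_2>0$, and then reading off the unions in Lemma \ref{lem:Lambdaell}. Your write-up is simply a more explicit version of the paper's one-line justification, including the harmless overlap at $0$.
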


Recall that by $\cC(\lambda_0)$ we denote the connected component of the closure of $\cN=\{(u, \lambda) \in \bH \times \bR \colon \nabla_u\Phi(u,\lambda)=0, (u,\lambda)\notin \cG(\tilde{u}_0)\times\bR\}$ containing $\cG(\tilde{u}_0) \times \{\lambda_0\} $.

\begin{Proposition}\label{prop:positive}
Consider the system \eqref{eq:system} with the potential $F$ and $u_0$ satisfying the assumptions (a1)--(a9). Fix $\alpha_{k_0}\in \sigma(-\Delta;\cU)\setminus\{0\}$ such that $\bV_{-\Delta}(\alpha_{k_0})$ is a nontrivial $SO(N)$-representation. If $p_1-\mu_{B}(0)>0$ then $\cC(\alpha_{k_0})\neq\emptyset$.

Moreover, if $p_1-\mu_{B}(0)>0$ is even and the continuum $\cC(\alpha_{k_0})$ is bounded then the following assertions hold:
\begin{enumerate}[(i)]
\item $p_2>0$,
\item $p_2$ is odd,
\item $\cC(\alpha_{k_0})\cap \left(\cG(\tilde{u}_0)\times (\sigma(\Delta,\cU) \setminus \{0\})\right) \neq\emptyset$.
\end{enumerate}
\end{Proposition}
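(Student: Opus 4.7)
The plan is to handle the two parts separately. For the first claim that $\cC(\alpha_{k_0})\neq\emptyset$, I would apply Theorem~\ref{thm:glob} at $\lambda_0=\alpha_{k_0}$. Assumption (a9) simplifies \eqref{eq:defV1V2} considerably: since $\sigma(B_1)\setminus\{0\}=\{1\}$ with $\mu_{B_1}(1)=p_1-\mu_{B}(0)$, one obtains $\bV_1(\alpha_{k_0})=\bV_{-\Delta}(\alpha_{k_0})^{p_1-\mu_{B}(0)}$, which is a nontrivial $SO(N)$-representation by hypothesis. Since $\alpha_{k_0}>0$ and the defining equation $\alpha_{k_0}\cdot 1=-\alpha_k$ forces $\alpha_k<0$, there is no admissible pair, so $\bV_2(\alpha_{k_0})=\{0\}$. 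Consequently, for every $m,n\in\bN\cup\{0\}$, $\bV_1(\alpha_{k_0})\oplus\bR^{2m}$ retains a nontrivial $SO(N)$-summand while $\bV_2(\alpha_{k_0})\oplus\bR^{2n}=\bR^{2n}$ is trivial, so they cannot be $SO(N)$-equivalent. Theorem~\ref{thm:glob} then delivers $\cC(\alpha_{k_0})\neq\emptyset$.

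For the second part, assume $\cC(\alpha_{k_0})$ is bounded and $p_1-\mu_B(0)=2r$ is even. Lemma~\ref{lem:admissible} gives admissibility of $(\cG,\cG_{\tilde u_0})$, so Theorem~\ref{thm:Rab} combined with Remark~\ref{rem:sum} yields finitely many $\lambda_{i_1},\ldots,\lambda_{i_s}\in\Lambda$ in $\cC(\alpha_{k_0})\cap\cT$ satisfying
\[
\BIF_{SO(N)}(\alpha_{k_0})+\sum_{j=1}^s\BIF_{SO(N)}(\lambda_{i_j})=\Theta\in U(SO(N)).
\]
Each term is then unpacked via formulas \eqref{eq:lambdadodatnie} and \eqref{eq:lambdaujemne} from the proof of Theorem~\ref{thm:glob}: when $\lambda>0$ only $\bV_1(\lambda)=\bV_{-\Delta}(\lambda)^{2r}$ contributes and when $\lambda<0$ only $\bV_2(\lambda)=\bV_{-\Delta}(-\lambda)^{p_2}$ contributes.

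To establish (iii) I would argue by contradiction: suppose no $\lambda_{i_j}$ lies in $\sigma(\Delta;\cU)\setminus\{0\}$. By Lemma~\ref{lem:Lambdaell} every $\lambda_{i_j}\geq 0$, and the value $\lambda=0$ can be ruled out via Theorem~\ref{thm:globw0} in the parity regime under consideration. Ordering the remaining parameters $\alpha_{k_0}=\mu_1<\mu_2<\ldots<\mu_{s+1}$ and factoring the invertible prefactors in \eqref{eq:lambdadodatnie} that depend on $\lambda$ through $\bH_{(\lambda,n,B_1,+)}$, the sum telescopes (modulo invertible contributions from those $\alpha_k\in E\setminus\{\mu_1,\ldots,\mu_{s+1}\}$ with $\bV_{-\Delta}(\alpha_k)$ nontrivial) to an element of the form $\nabla_{SO(N)}\textrm{-}\mathrm{deg}(-Id,B(\bX))-\bI$ times a unit, where $\bX$ contains the nontrivial summand $\bV_{-\Delta}(\alpha_{k_0})^{2r}$. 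Fact~\ref{fact:lemGarRyb} then forbids vanishing, contradicting the displayed identity. Hence some $\lambda_{i_j}\in\sigma(\Delta;\cU)\setminus\{0\}$, proving (iii). Assertion (i) is immediate from Lemma~\ref{lem:IlSigma}, since negative parameters belong to $\Lambda$ only when $p_2>0$. For (ii), if $p_2=2s$ were even as well, an analogous telescoping/parity analysis applied to the negative-$\lambda$ contributions shows that the even-multiplicity representations $\bV_{-\Delta}(-\lambda_{i_j})^{2s}$ still produce, via Fact~\ref{fact:lemGarRyb}, a residue incapable of cancelling the nontrivial $\BIF_{SO(N)}(\alpha_{k_0})$, so $p_2$ must be odd.

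The main obstacle is carrying out the telescoping rigorously in $U(SO(N))$. The prefactor $C(\mu_j)$ in \eqref{eq:lambdadodatnie} involves a product of degrees of $-Id$ on $\bV_{-\Delta}(\alpha_k)^{2r}$ over \emph{all} eigenvalues $\alpha_k<\mu_j$ with $\bV_{-\Delta}(\alpha_k)$ nontrivial, including many that are never attained as bifurcation parameters on $\cC(\alpha_{k_0})$. Carefully tracking these extra invertible factors so as to isolate the nontrivial residue, and invoking Fact~\ref{fact:lemGarRyb} to preclude accidental collapse to $\bI$ in the Euler ring, is the delicate algebraic step, especially for the parity conclusion in (ii).
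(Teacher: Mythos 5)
The first half of your proposal (nonemptiness of $\cC(\alpha_{k_0})$ via Theorem \ref{thm:glob} and the computation $\bV_1(\alpha_{k_0})=\bV_{-\Delta}(\alpha_{k_0})^{p_1-\mu_B(0)}$, $\bV_2(\alpha_{k_0})=\{0\}$) matches the paper exactly. The skeleton of the second half is also right: boundedness plus Theorem \ref{thm:Rab} and Remark \ref{rem:sum} give $\sum_j \BIF_{SO(N)}(\lambda_{i_j})=\Theta$, and one must show this identity is impossible under the stated parity hypotheses. But the step where you actually derive the contradiction has a genuine gap. Each summand has the form $C_j\star\bigl(D_j-\bI\bigr)$ with $C_j$ a unit of $U(SO(N))$ and $D_j=\nabla_{SO(N)}\textrm{-}\mathrm{deg}(-Id,B(\bV_{-\Delta}(\pm\lambda_{i_j})^{q}))$; these terms involve different eigenspaces and different prefactors, and there is no telescoping identity among them — you acknowledge this yourself as the ``main obstacle'' but never resolve it. Moreover, Fact \ref{fact:lemGarRyb} is a two-term comparison statement (equality of \emph{two} degrees of $-Id$ forces stable equivalence of the representations); it gives you no control over a signed sum of several such products and cannot ``preclude accidental collapse'' of that sum in $U(SO(N))$, whose ring structure for $N\ge 3$ is not explicit enough for this.

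The paper's missing ingredient is different and essential: it pushes the whole identity into $U(SO(2))$ via the restriction homomorphism $i^*\colon U(SO(N))\to U(SO(2))$, where the explicit formula for $\nabla_{SO(2)}\textrm{-}\mathrm{deg}(-Id,B(\cdot))$ (Theorem 2.3 of \cite{Golebiewska}) is available. Because $p_1-\mu_B(0)$ (resp.\ $p_2$) is even, the unit coordinate of the prefactor is $+1$ while the unit coordinate of $D_j-\bI$ is $0$, and the ring structure of $U(SO(2))$ then collapses each summand to $\nabla_{SO(2)}\textrm{-}\mathrm{deg}(-Id,B(\bV_{-\Delta}(|\lambda_{i_j}|)))^{q}-\bI$, an element with \emph{all coordinates non-positive}; also $\BIF_{SO(2)}(0)=\Theta$ by \eqref{eq:bif(0)}. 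A sum of elements with non-positive coordinates vanishes only if each does, and $\BIF_{SO(2)}(\alpha_{k_0})\neq\Theta$ because $\bV_{-\Delta}(\alpha_{k_0})$ is nontrivial (the argument of Lemma 4.3 of \cite{GolRyb2011}). This sign/positivity argument is what delivers (i), (ii) and (iii) simultaneously; without it, or some substitute of comparable strength, your outline does not close.
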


Throughout the proof we keep the notation from the one of Theorem \ref{thm:glob}. Let us start with the following auxiliary lemma. We keep the notation $\bV(n)=\bigoplus_{k=1}^n \bV_{-\Delta} (\alpha_k)$.

\begin{Lemma}\label{lem:doprop}
Fix $\lambda_0\in\Lambda$.
\begin{enumerate}[(i)]
\item If $\lambda_0>0$ then $\lambda_0=\alpha_{k_0}$ for some $\alpha_{k_0}\in \sigma(-\Delta;\cU)\setminus\{0\}$ and
  \begin{multline}\label{eq:bif(+)}
    \BIF_{SO(N)}(\lambda_{0})=\nabla_{SO(N)}\textrm{-}\mathrm{deg}(-Id, B(\bV(k_0-1)))^{p_1-\mu_{B}(0)}\star \\ \left(\nabla_{SO(N)}\textrm{-}\mathrm{deg}(-Id, B(\bV_{-\Delta}(\alpha_{k_0})))^{p_1-\mu_{B}(0)} -\bI\right).
  \end{multline}
\item If $\lambda_0<0$ then $\lambda_0=-\alpha_{k_0}$ for some $\alpha_{k_0}\in \sigma(-\Delta;\cU)\setminus\{0\}$ and
  \begin{multline}\label{eq:bif(-)}
    \BIF_{SO(N)}(\lambda_{0})=\nabla_{SO(N)}\textrm{-}\mathrm{deg}(-Id, B(\bV(k_0)))^{-p_2} \star\\ \left(\nabla_{SO(N)}\textrm{-}\mathrm{deg}(-Id, B(\bV_{-\Delta}(\alpha_{k_0})))^{p_2}-\bI \right).
  \end{multline}
\item  If $\lambda_0=0$ then
\begin{equation}\label{eq:bif(0)}
\BIF_{SO(N)}(\lambda_0)=\left((-1)^{p_1-\mu_{B}(0)}-(-1)^{-p_2}\right)\cdot\bI.
\end{equation}
\end{enumerate}
\end{Lemma}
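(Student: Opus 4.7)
The plan is to specialize the general computations already carried out in the proofs of Theorem~\ref{thm:glob} and Theorem~\ref{thm:globw0} using the concrete spectral data provided by assumption~(a9). First I would note that under~(a9) we have $\sigma(B_1)\setminus\{0\}=\{1\}$ with $\mu_{B_1}(1)=p_1-\mu_{B}(0)$, and $\sigma(B_2)\setminus\{0\}=\{1\}$ with $\mu_{B_2}(1)=p_2$. This lets me solve the compatibility conditions appearing in the definitions~\eqref{eq:defV1V2} of $\bV_1(\lambda_0),\bV_2(\lambda_0)$ and in the definitions~\eqref{eq:defHbrzydkie} of the spaces $\bH_{(\lambda_0,n,B_i,\pm)}$ explicitly.

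For case~(i) with $\lambda_0>0$, the identity $\lambda_0 b_j=\alpha_k$ with $b_j\in\sigma(B_1)\setminus\{0\}$ forces $\lambda_0=\alpha_{k_0}$ for some $k_0$, giving $\bV_1(\lambda_0)=\bV_{-\Delta}(\alpha_{k_0})^{p_1-\mu_{B}(0)}$, while $\lambda_0 b_j=-\alpha_k$ with $b_j\in\sigma(B_2)\setminus\{0\}$ has no solution, so $\bV_2(\lambda_0)=\{0\}$. The strict inequalities in~\eqref{eq:defHbrzydkie} then identify $\bH_{(\lambda_0,n,B_1,+)}=\bV(k_0-1)^{p_1-\mu_{B}(0)}$ and $\bH_{(\lambda_0,n,B_2,-)}=\bV(n)^{p_2}$. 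Case~(ii) with $\lambda_0<0$ is handled by an analogous argument: $\lambda_0=-\alpha_{k_0}$, $\bV_1(\lambda_0)=\{0\}$, $\bV_2(\lambda_0)=\bV_{-\Delta}(\alpha_{k_0})^{p_2}$, $\bH_{(\lambda_0,n,B_1,+)}=\{0\}$ and $\bH_{(\lambda_0,n,B_2,-)}=(\bV(n)\ominus\bV(k_0))^{p_2}$.

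The auxiliary step common to both sign cases is to evaluate $\nabla_{SO(N)}\textrm{-}\mathrm{deg}(L,B(\bH^n\cap\bW))$. Here I would use that $L=Id$ on $\cH_{p_1}$ and $L=-Id$ on $\cH_{p_2}$, together with the fact that the tangent space $T_{\tilde{u}_0}\cG(\tilde{u}_0)$ lies in the constant functions valued in $\bR^{p_1}$ (since $SO(N)$ fixes $\tilde{u}_0$ and $\ker B=\ker B_1\oplus\{0\}$ by~(a9) and~(a7)). Consequently $\bH^n\cap\bW$ splits orthogonally into a subspace of $\cH_{p_1}$ on which $L$ acts as the identity and the full $\bH^n\cap\cH_{p_2}=\bV(n)^{p_2}$ on which $L$ acts as $-Id$, whence the product formula yields $\nabla_{SO(N)}\textrm{-}\mathrm{deg}(L,B(\bH^n\cap\bW))=\nabla_{SO(N)}\textrm{-}\mathrm{deg}(-Id,B(\bV(n)))^{p_2}$. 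Substituting these data into~\eqref{eq:lambdadodatnie} (resp.~\eqref{eq:lambdaujemne}), and using the decompositions $\bV(n)=\bV(k_0-1)\oplus(\bV(n)\ominus\bV(k_0-1))$ (resp.~$\bV(n)=\bV(k_0)\oplus(\bV(n)\ominus\bV(k_0))$) together with the commutativity of $\star$ in $U(SO(N))$, produces the desired cancellations and outputs~\eqref{eq:bif(+)} and~\eqref{eq:bif(-)}.

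For case~(iii) with $\lambda_0=0$ I would build on the expression obtained inside the proof of Theorem~\ref{thm:globw0}, substituting $m^+(B)=(p_1-\mu_{B}(0))+p_2$ and $m^-(B)=0$ coming from~(a9). The factor $\nabla_{SO(N)}\textrm{-}\mathrm{deg}(L,B(\bH^n\cap\bW))^{-1}\star\nabla_{SO(N)}\textrm{-}\mathrm{deg}(-Id,B(\bV(n)\ominus\bV(1)))^{p_2}$ telescopes to $\nabla_{SO(N)}\textrm{-}\mathrm{deg}(-Id,B(\bV(1)))^{-p_2}$, and since $\bV(1)=\bV_{-\Delta}(0)=\bR$ is the trivial $SO(N)$-representation (Remark~\ref{rem:zero}), Fact~\ref{fact:-Idontriv} converts this to $(-1)^{-p_2}\bI$. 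Multiplying by $(-1)^{p_1-\mu_{B}(0)+p_2}-1$ and simplifying gives~\eqref{eq:bif(0)}. The principal obstacle throughout is purely bookkeeping: keeping track of which $\bV(\cdot)$-factors survive after inverting the $L$-term, and carefully distinguishing the strict inequalities defining $\bH_{(\lambda_0,n,B_i,\pm)}$ from the equalities defining $\bV_i(\lambda_0)$ when comparing spectra on the two sides of the jump.
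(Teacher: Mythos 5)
Your proposal is correct and follows essentially the same route as the paper's proof: you identify the spaces $\bV_i(\lambda_0)$ and $\h_{(\lambda_0,n,B_i,\pm)}$ explicitly from (a9), compute $\nabla_{SO(N)}\textrm{-}\mathrm{deg}(L,B(\bH^n\cap\bW))$ via the splitting of $\bH^n\cap\bW$ determined by $T_{u_0}\Gamma(u_0)\subset\bR^{p_1}$, substitute into \eqref{eq:lambdadodatnie}--\eqref{eq:lambdaujemne}, and handle $\lambda_0=0$ by specialising the proof of Theorem \ref{thm:globw0} together with Fact \ref{fact:-Idontriv}. All the cancellations you describe are exactly those carried out in the paper.
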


\begin{proof}
First, recall that from \eqref{eq:lambdadodatnie} and \eqref{eq:lambdaujemne}, we have
\begin{equation*}
\begin{split}
&\BIF_{SO(N)}(\lambda_0)= \sign\hspace{-0.13cm} (\lambda_0)\cdot \nabla_{SO(N)}\textrm{-}\mathrm{deg}(L, B(\bH^n\cap\bW))^{-1}\star \\
& \nabla_{SO(N)}\textrm{-}\mathrm{deg} (-Id, B(\h_{(\lambda_{0},n, B_1,{+})})) \star  \nabla_{SO(N)}\textrm{-}\mathrm{deg} (-Id, B(\h_{(\lambda_{0},n, B_2,{-})}) ))\star\\
&\left(\nabla_{SO(N)}\textrm{-}\mathrm{deg} (-Id, B(\bV_1(\lambda_{0})))-\nabla_{SO(N)}\textrm{-}\mathrm{deg} (-Id, B(\bV_2(\lambda_{0})) \right)
\end{split}
\end{equation*}
for $\lambda_0\neq 0$ and $n$ sufficiently large.

Since $\mu_{B}(0)=\mu_{B_1}(0)$ and the orbit $\Gamma(u_0)$ is non-degenerate,  $T_{u_0}\Gamma(u_0)\subset \bR^{p_1}$. Therefore, using the fact that
$\bH^1=\bV_{-\Delta}(0)^p= \bR^{p_1}\oplus\bR^{p_2}$, we obtain
$\bH^1\cap\bW=(\bR^{p_1}\ominus T_{u_0}\Gamma(u_0))\oplus\bR^{p_2}$.  Consequently
$$\bH^n\cap\bW= \left((\bR^{p_1}\ominus T_{u_0}\Gamma(u_0))\oplus\bigoplus_{k=2}^n \bigoplus_{j=1}^{p_1} \bV_{-\Delta} (\alpha_k)\right)\oplus \left(\bigoplus_{k=1}^n \bigoplus_{j=1}^{p_2} \bV_{-\Delta} (\alpha_k)\right)$$
and $L_{|\bH^n\cap\bW}$ is of the form $(Id, -Id)$ on this decomposition. Hence
\begin{equation}\label{eq:stopienL}
\nabla_{SO(N)}\textrm{-}\mathrm{deg}(L, B(\bH^n\cap\bW))^{-1}=\nabla_{SO(N)}\textrm{-}\mathrm{deg}(-Id, B(\bV(n)))^{-p_2}.
\end{equation}

Consider  $\lambda_0>0$. It follows immediately from Lemma \ref{lem:IlSigma} that $\lambda_0=\alpha_{k_0}$ for some $\alpha_{k_0}\in \sigma(-\Delta;\cU)\setminus\{0\}$ and $p_1-\mu_{B}(0)>0$. We will show \eqref{eq:bif(+)}.

From the formulae \eqref{eq:defV1V2}, \eqref{eq:defHbrzydkie}, we obtain:
\[
\bV_1(\lambda_0)=\bV_{-\Delta}(\alpha_{k_0})^{p_1-\mu_{B}(0)}, \ \ \bV_2(\lambda_0)=\emptyset,
\]
\[\h_{(\lambda_{0},n, B_1,{+})}= \bigoplus_{k=1}^n\bigoplus_{\alpha_k<\alpha_{k_0} } \bV_{-\Delta}(\alpha_{k})^{p_1-\mu_{B}(0)}\ \text{ and }\ \h_{(\lambda_{0},n, B_2,{-})}= \bigoplus_{k=1}^n\bigoplus_{-\alpha_k<\alpha_{k_0}} \bV_{-\Delta}(\alpha_{k})^{p_2}.
\]
 Without loss of generality we can assume that $n>k_0$. Summing up,
\[
\h_{(\lambda_{0},n, B_1,{+})}= \bV(k_0-1)^{p_1-\mu_{B}(0)}\ \text{ and }\ \h_{(\lambda_{0},n, B_2,{-})}=\bV(n)^{p_2}.
\]

Consequently,
\begin{multline*}
\BIF_{SO(N)}(\lambda_{0})=\nabla_{SO(N)}\textrm{-}\mathrm{deg}(-Id, B(\bV(n)))^{-p_2} \star\nabla_{SO(N)}\textrm{-}\mathrm{deg}(-Id, B(\bV(k_0-1)))^{p_1-\mu_{B}(0)}\star\\
\nabla_{SO(N)}\textrm{-}\mathrm{deg}(-Id, B(\bV(n)))^{p_2} \star\left(\nabla_{SO(N)}\textrm{-}\mathrm{deg}(-Id, B(\bV_{-\Delta}(\alpha_{k_0})))^{p_1-\mu_{B}(0)} -\bI\right)=\\
\nabla_{SO(N)}\textrm{-}\mathrm{deg}(-Id, B(\bV(k_0-1)))^{p_1-\mu_{B}(0)} \star\left(\nabla_{SO(N)}\textrm{-}\mathrm{deg}(-Id, B(\bV_{-\Delta}(\alpha_{k_0})))^{p_1-\mu_{B}(0)} -\bI\right),
\end{multline*}
which completes the proof of \eqref{eq:bif(+)}.

If $\lambda_{0}<0$ then clearly $\lambda_{0}=-\alpha_{k_0}$ for some $\alpha_{k_0}\in \sigma(-\Delta;\cU)\setminus\{0\}$ and $p_2>0$. Note that
\[\h_{(\lambda_{0},n, B_1,{+})}= \emptyset, \ \ \h_{(\lambda_{0},n, B_2,{-})}= \bigoplus_{k=1}^n\bigoplus_{\alpha_k>\alpha_{k_0}} \bV_{-\Delta}(\alpha_{k_0})^{p_2}=(\bV(n)\ominus\bV(k_0))^{p_2}
\]
and
\[
\bV_1(\lambda_0)=\emptyset, \ \ \bV_2(\lambda_0)=\bV_{-\Delta}(\alpha_{k_0})^{p_2}.
\]
Therefore
\begin{multline*}
\BIF_{SO(N)}(\lambda_{0})=-\nabla_{SO(N)}\textrm{-}\mathrm{deg}(-Id, B(\bV(n)))^{-p_2} \star\bI\star\\
\nabla_{SO(N)}\textrm{-}\mathrm{deg}(-Id, B(\bV(n)\ominus\bV(k_0)))^{p_2} \star\left(\bI- \nabla_{SO(N)}\textrm{-}\mathrm{deg}(-Id, B(\bV_{-\Delta}(\alpha_{k_0})))^{p_2} \right)=\\
\nabla_{SO(N)}\textrm{-}\mathrm{deg}(-Id, B(\bV(k_0)))^{-p_2}\star\left(\nabla_{SO(N)}\textrm{-}\mathrm{deg}(-Id, B(\bV_{-\Delta}(\alpha_{k_0})))^{p_2}-\bI \right)
\end{multline*}
and this proves \eqref{eq:bif(-)}

Finally, if $\lambda_{0}=0$ then the formula  \eqref{eq:stopienL} remains valid. Hence,  by the proof of Theorem \ref{thm:globw0},
\begin{multline*}
\BIF_{SO(N)}(0)=
\nabla_{SO(N)}\textrm{-}\mathrm{deg}(-Id, B(\bV(n)))^{-p_2}\star\nabla_{SO(N)}\textrm{-}\mathrm{deg}(-Id, B(\bV(n)\ominus\bV(1)))^{p_2}\star\\
\left((-1)^{p_1+p_2-\mu_{B}(0)}- 1\right)= \nabla_{SO(N)}\textrm{-}\mathrm{deg}(-Id, B(\bV_{-\Delta}(0)))^{-p_2}
\star\left((-1)^{p_1+p_2-\mu_{B}(0)}- 1\right).
\end{multline*}
Therefore, since $\bV_{-\Delta}(0)= \bR$  is a trivial $SO(N)$-representation, we obtain, from Fact \ref{fact:-Idontriv}, that
\begin{equation*}
\BIF_{SO(N)}(0)=\left((-1)^{p_1-\mu_{B}(0)}-(-1)^{-p_2}\right)\cdot\bI.
\end{equation*}
This completes the proof.
\end{proof}

\begin{proof}[Proof of Proposition \ref{prop:positive}]

From \eqref{eq:defV1V2} we have
\[
\bV_1(\alpha_{k_0})=\bV_{-\Delta}(\alpha_{k_0})^{p_1-\mu_{B}(0)}, \ \ \bV_2(\alpha_{k_0})=\emptyset.
\]
and therefore from Theorem \ref{thm:glob} we conclude that if $p_1-\mu_{B}(0)>0$ then, since $\bV_{-\Delta}(\alpha_{k_0})$ is a nontrivial $SO(N)$-representation,  $\cC(\alpha_{k_0})\neq \emptyset$.

Suppose that $p_1-\mu_{B}(0)>0$ is even and $\cC(\alpha_{k_0})$ is bounded. By the proof of Theorem \ref{thm:glob}, $\BIF_{\cG}(\alpha_{k_0})\neq\Theta\in U(\cG)$ and therefore, by Theorem \ref{thm:Rab},
$$\cC(\alpha_{k_0})\cap (\cG(\tilde{u}_0)\times\bR)=\cG(\tilde{u}_0)\times\{\lambda_{i_1},\ldots, \lambda_{i_s}\}$$ for some $\lambda_{i_1},\ldots, \lambda_{i_s}\in\Lambda$. Moreover $\alpha_{k_0}\in \{\lambda_{i_1},\ldots, \lambda_{i_s}\}$ and
$$\BIF_{\cG}(\lambda_{i_1})+\ldots+\BIF_{\cG}(\lambda_{i_s})=\Theta\in U(\cG).$$
From the assumption (a8) we have $\cG_{\tilde{u}_0}=\{e\}\times SO(N)$. Therefore from Lemma \ref{lem:admissible} it follows that the pair $(\cG, \cG_{\tilde{u}_0})$ is admissible. Hence, from Remark \ref{rem:sum} we get
\begin{equation}\label{eq:0wSO(N)}
\BIF_{SO(N)}(\lambda_{i_1})+\ldots+\BIF_{SO(N)}(\lambda_{i_s})=\Theta\in U(SO(N)).
\end{equation}
Consider the ring homomorphism $i^*\colon U(SO(N))\to U(SO(2))$ induced by the natural inclusion $i\colon SO(2)\to SO(N)$, see \cite{TomDieck1}. From the properties of $i^*$ it follows that  $i^*(\BIF_{SO(N)}(\lambda_{i_j}))=\BIF_{SO(2)}(\lambda_{i_j})$, where $\BIF_{SO(2)}(\lambda_{i_j})$ is defined analogously as in \eqref{eq:BIFSO(N)}.
Then, from \eqref{eq:0wSO(N)},
\begin{equation}\label{eq:0wSO(2)}
\BIF_{SO(2)}(\lambda_{i_1})+\ldots+\BIF_{SO(2)}(\lambda_{i_s})=\Theta\in U(SO(2)).
\end{equation}
For  $\lambda_{i_j}>0$, we have $\lambda_{i_j}\in \sigma(-\Delta;\cU)$.
Using formula \eqref{eq:bif(+)} and applying the homomorphism $i^*$, we obtain the formula analogous to \eqref{eq:bif(+)} for $\BIF_{SO(2)}(\alpha_{k_0})$. Considering its first factor, using the formula for $\degso(-Id, B(\bX))$ (see for example Theorem 2.3 of \cite{Golebiewska}), we obtain that for even $p_1-\dim \Gamma(u_0)$ the coefficient of $\chi_{\sone}(\sone/\sone^+)$ equals one. In the similar way such coefficient in the latter factor equals zero. Therefore, from the properties of the multiplication in the Euler ring $U(SO(2))$, we obtain that
\begin{equation}\label{eq:positive} \BIF_{SO(2)}(\lambda_{i_j})=\nabla_{SO(2)}\textrm{-}\mathrm{deg}(-Id, B(\bV_{-\Delta}(\lambda_{i_j})))^{p_1-\mu_{B}(0)} -\bI .\end{equation}
Moreover, again from the formula for $\degso(-Id, B(\bX))$, we obtain that all coefficients in the above degree are non-positive.

Following the reasoning from the proof of Lemma 4.3 of \cite{GolRyb2011} we get
\begin{equation}\label{eq:nontrivSO2}
\BIF_{SO(2)}(\alpha_{k_0})\neq \Theta\in U(SO(2))\textbf{}.
\end{equation}

Now we will prove the condition (i) of the assertion. Suppose that $p_2=0$. Then, from Lemma \ref{lem:IlSigma}, $\lambda_{i_j}\geq0$ for every $j=1,\ldots, s$. From \eqref{eq:bif(0)} we obtain $\BIF_{SO(N)}(0)=\Theta\in U(SO(N))$ and consequently $\BIF_{SO(2)}(0)=\Theta\in U(SO(2))$.
Combining it with \eqref{eq:positive} and \eqref{eq:nontrivSO2} we conclude that
\[
\BIF_{SO(2)}(\lambda_{i_1})+\ldots+\BIF_{SO(2)}(\lambda_{i_s})\neq\Theta\in U(SO(2)),
\]
which contradicts \eqref{eq:0wSO(2)}. Hence $p_2>0$.

We turn to the condition (ii) of the assertion. Suppose that $p_2$ is even. Note that from Lemma \ref{lem:doprop}(ii), if $\lambda_{i_j}<0$ then $\lambda_{i_j}\in \sigma(\Delta,\cU)$ and, as before,
\begin{equation*}\label{eq:negative} \BIF_{SO(2)}(\lambda_{i_j})=\nabla_{SO(2)}\textrm{-}\mathrm{deg}(-Id, B(\bV_{-\Delta}(-\lambda_{i_j})))^{p_2} -\bI.
\end{equation*}
As previously, we obtain that all coefficients in the above degree are non-positive.

Moreover, $\BIF_{SO(2)}(0)=\Theta\in U(SO(2))$ by \eqref{eq:bif(0)}. Combining it with \eqref{eq:positive} and \eqref{eq:nontrivSO2} we conclude that
\[
\BIF_{SO(2)}(\lambda_{i_1})+\ldots+\BIF_{SO(2)}(\lambda_{i_s})\neq\Theta\in U(SO(2)),
\]
which contradicts \eqref{eq:0wSO(2)}. Hence $p_2$ is odd.

To finish the proof note that if
\[
\cC(\alpha_{k_0})\cap \left(\cG(\tilde{u}_0)\times (\sigma(\Delta,\cU) \setminus \{0\})\right) =\emptyset,
\]
then
\[
\cC(\alpha_{k_0})\cap\left(\cG(\tilde{u}_0)\times\bR\right)\subset\cC(\alpha_{k_0})\cap \left(\cG(\tilde{u}_0)\times\sigma(-\Delta;\cU)\right)
\]
and therefore applying once again \eqref{eq:bif(0)}, \eqref{eq:0wSO(2)}--\eqref{eq:nontrivSO2} we obtain a contradiction.
\end{proof}

Reasoning similarly as in the above proof we obtain:

\begin{Proposition}\label{prop:negative}
Consider the system \eqref{eq:system} with the potential $F$ and $u_0$ satisfying the assumptions (a1)--(a9). Fix $\alpha_{k_0}\in \sigma(-\Delta;\cU)\setminus\{0\}$ such that $\bV_{-\Delta}(\alpha_{k_0})$ is a nontrivial $SO(N)$-representation. If $p_2>0$ then $\cC(-\alpha_{k_0})\neq\emptyset$.

Moreover, if $p_2>0$ is even and the continuum $\cC(-\alpha_{k_0})$ is bounded then the following assertions hold:
\begin{enumerate}[(i)]
\item $p_1-\mu_{B}(0)>0$,
\item $p_1-\mu_{B}(0)$ is odd,
\item $\cC(-\alpha_{k_0})\cap \left(\cG(\tilde{u}_0)\times (\sigma(-\Delta,\cU) \setminus \{0\})\right) \neq\emptyset$.
\end{enumerate}
\end{Proposition}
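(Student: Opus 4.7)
The proof mirrors that of Proposition \ref{prop:positive}, interchanging the roles of $p_1-\mu_B(0)$ and $p_2$ and of positive and negative bifurcation parameters; the parallel structure of formulae \eqref{eq:bif(+)}, \eqref{eq:bif(-)}, \eqref{eq:bif(0)} in Lemma \ref{lem:doprop} is what makes this interchange possible. First, reading off from \eqref{eq:defV1V2} with $\lambda_0=-\alpha_{k_0}$, and using that $\sigma(B_1)\subset\{0,1\}$ and $\sigma(B_2)=\{1\}$, one obtains $\bV_1(-\alpha_{k_0})=\emptyset$ and $\bV_2(-\alpha_{k_0})=\bV_{-\Delta}(\alpha_{k_0})^{p_2}$. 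Since $\bV_{-\Delta}(\alpha_{k_0})$ is a nontrivial $SO(N)$-representation and $p_2>0$, no stabilisation by trivial summands can render these two spaces $SO(N)$-equivalent, so Theorem \ref{thm:glob} gives $\cC(-\alpha_{k_0})\neq\emptyset$.

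Assume now that $p_2>0$ is even and $\cC(-\alpha_{k_0})$ is bounded. Theorem \ref{thm:Rab} produces parameters $\lambda_{i_1},\ldots,\lambda_{i_s}\in\Lambda$ with $-\alpha_{k_0}$ among them and $\sum_j\BIF_\cG(\lambda_{i_j})=\Theta$. Lemma \ref{lem:admissible}, Remark \ref{rem:sum}, and the ring homomorphism $i^*\colon U(SO(N))\to U(SO(2))$ induced by the inclusion $SO(2)\hookrightarrow SO(N)$ then yield
\[
\BIF_{SO(2)}(\lambda_{i_1})+\cdots+\BIF_{SO(2)}(\lambda_{i_s})=\Theta\in U(SO(2)).
\]
For each negative $\lambda_{i_j}=-\alpha_{k_{i_j}}$, applying $i^*$ to \eqref{eq:bif(-)} and invoking the evenness of $p_2$ exactly as in the proof of Proposition \ref{prop:positive}(ii) produces
\[
\BIF_{SO(2)}(\lambda_{i_j})=\degso(-Id,B(\bV_{-\Delta}(-\lambda_{i_j})))^{p_2}-\bI,
\]
whose every coordinate is non-positive by the explicit formula for $\degso(-Id,B(\bX))$ from Theorem 2.3 of \cite{Golebiewska}. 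Nontriviality $\BIF_{SO(2)}(-\alpha_{k_0})\neq\Theta$ follows from the argument of Lemma 4.3 of \cite{GolRyb2011}, delivering a strictly negative non-unit coordinate.

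The three conclusions follow by sign-counting in $U(SO(2))$. For (i), suppose $p_1-\mu_B(0)=0$: then Lemma \ref{lem:IlSigma} forces every $\lambda_{i_j}\leq 0$ and \eqref{eq:bif(0)} gives $\BIF_{SO(2)}(0)=\Theta$ under the parity assumptions, so the strictly negative coordinate from $\BIF_{SO(2)}(-\alpha_{k_0})$ cannot be cancelled by the remaining non-positive summands, a contradiction. For (ii), suppose $p_1-\mu_B(0)$ is even: \eqref{eq:bif(0)} again yields $\BIF_{SO(2)}(0)=\Theta$, and for every positive $\lambda_{i_j}$ formula \eqref{eq:bif(+)} reduces analogously to the non-positive expression $\degso(-Id,B(\bV_{-\Delta}(\lambda_{i_j})))^{p_1-\mu_B(0)}-\bI$, so the same counting applies. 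For (iii), if the intersection is empty then every $\lambda_{i_j}\leq 0$; regardless of whether $0\in\{\lambda_{i_j}\}$ (in which case \eqref{eq:bif(0)} with $p_1-\mu_B(0)$ odd and $p_2$ even contributes only to the unit coordinate), the strictly negative non-unit coordinate from $\BIF_{SO(2)}(-\alpha_{k_0})$ again blocks cancellation. The main technical hurdle is the bookkeeping in $U(SO(2))$ that reduces \eqref{eq:bif(-)} to the displayed clean formula, which is where the parity hypothesis on $p_2$ enters essentially; the cleanest route is to cite the analogous calculation already carried out in the proof of Proposition \ref{prop:positive}(ii).
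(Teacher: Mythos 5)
Your proposal is correct and follows exactly the route the paper intends: the paper gives no separate argument for Proposition \ref{prop:negative}, stating only that it follows ``reasoning similarly as in the above proof,'' and your write-up is precisely that symmetric argument, with the roles of $p_1-\mu_B(0)$ and $p_2$ and of $\pm\alpha_{k_0}$ interchanged and the bookkeeping in $U(SO(2))$ carried out correctly.
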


As an immediate corollary from Propositions \ref{prop:positive} and \ref{prop:negative} we obtain:

\begin{Theorem}\label{thm:unbounded}
Consider the system \eqref{eq:system} with the potential $F$ and $u_0$ satisfying the assumptions (a1)--(a9). Fix $\alpha_{k_0}\in \sigma(-\Delta;\cU)$ such that $\bV_{-\Delta}(\alpha_{k_0})$ is a nontrivial $SO(N)$-representation. Then
\begin{enumerate}
\item if $p_1-\mu_{B}(0)>0$ and $p_2\geq 0$ are even then the continuum $\cC(\alpha_{k_0})$ is unbounded,
\item if $p_1-\mu_{B}(0)\geq0$ and $p_2> 0$ are even then the continuum $\cC(-\alpha_{k_0})$ is unbounded.
\end{enumerate}
\end{Theorem}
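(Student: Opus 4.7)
The plan is to deduce the theorem directly from Propositions \ref{prop:positive} and \ref{prop:negative} by contraposition. As the author flags in the preceding sentence, this is meant to be an immediate corollary: once the two propositions have been established, the parity hypotheses of Theorem \ref{thm:unbounded} are precisely those needed to rule out every clause of the propositions' ``bounded continuum'' conclusion.

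For part (1), I would first invoke the existence half of Proposition \ref{prop:positive}: since $p_1-\mu_{B}(0)>0$ and $\bV_{-\Delta}(\alpha_{k_0})$ is a nontrivial $SO(N)$-representation, the continuum $\cC(\alpha_{k_0})$ is nonempty. Then, arguing by contradiction, I would suppose $\cC(\alpha_{k_0})$ is bounded. Because $p_1-\mu_{B}(0)>0$ is moreover assumed to be even, the second half of Proposition \ref{prop:positive} applies, and its conclusion (ii) forces $p_2$ to be odd. This contradicts the hypothesis that $p_2\geq 0$ is even, so $\cC(\alpha_{k_0})$ must be unbounded. (If one worries about the boundary case $p_2=0$, note that conclusion (i) of Proposition \ref{prop:positive} would simultaneously force $p_2>0$, which already rules out $p_2=0$ without appealing to (ii).)

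Part (2) is symmetric. Under $p_2>0$ and the nontriviality of $\bV_{-\Delta}(\alpha_{k_0})$ as an $SO(N)$-representation, the existence half of Proposition \ref{prop:negative} gives $\cC(-\alpha_{k_0})\neq\emptyset$. If $\cC(-\alpha_{k_0})$ were bounded, then, since $p_2>0$ is even, the second half of Proposition \ref{prop:negative} would apply and its conclusion (ii) would force $p_1-\mu_{B}(0)$ to be odd, contradicting the parity hypothesis. Hence $\cC(-\alpha_{k_0})$ is unbounded.

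There is no real technical obstacle at this level: all the degree-theoretic work — the reduction to $SO(N)$ via admissibility of the pair $(\cG,\{e\}\times SO(N))$, the explicit formulas \eqref{eq:bif(+)}--\eqref{eq:bif(0)} for the bifurcation index, the passage to $SO(2)$ through the restriction homomorphism $i^{\ast}\colon U(SO(N))\to U(SO(2))$, and the sign/coefficient analysis via the formula for $\degso(-\mathrm{Id},B(\bX))$ — is already performed inside Propositions \ref{prop:positive} and \ref{prop:negative}. The corollary is then just the contrapositive bookkeeping described above.
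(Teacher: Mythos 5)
Your proposal is correct and matches the paper exactly: the paper presents Theorem \ref{thm:unbounded} as an immediate corollary of Propositions \ref{prop:positive} and \ref{prop:negative}, obtained by precisely the contrapositive parity argument you describe. The only cosmetic remark is that the nontriviality of $\bV_{-\Delta}(\alpha_{k_0})$ as an $SO(N)$-representation already forces $\alpha_{k_0}\neq 0$ (by Remark \ref{rem:zero}), so the propositions, stated for $\alpha_{k_0}\in\sigma(-\Delta;\cU)\setminus\{0\}$, do apply.
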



\end{document}